\documentclass[11pt]{amsart}
\usepackage{amsmath}
\usepackage{stmaryrd}
\usepackage{epsfig,color}
\usepackage{blindtext}
\usepackage{enumerate}
\usepackage{enumitem}
\usepackage{hyperref}
\usepackage{graphicx}
\usepackage{url}
\usepackage{bbm}
\usepackage{amssymb}
\usepackage{filecontents}
\usepackage{nicefrac,mathtools}
\usepackage{verbatim} 

\headheight=6.15pt \textheight=8in \textwidth=6.5in
\oddsidemargin=0in \evensidemargin=0in \topmargin=0in

\setcounter{section}{0}

\newtheorem{theorem}{Theorem}[section]
\newtheorem{definition}[theorem]{Definition}

\newtheorem{proposition}[theorem]{Proposition}
\newtheorem{lemma}[theorem]{Lemma}
\newtheorem{remark}[theorem]{Remark}

\newtheorem{question}[theorem]{Question}

\newtheorem{claim}[]{Claim}
\newtheorem*{acknowledgements}{Acknowledgements}

\newtheorem*{remark*}{Remark}

\numberwithin{equation}{section}

\newcommand{\mf}{\mathbf}
\newcommand{\mc}{\mathcal}
\newcommand{\mb}{\mathbb}

\newcommand{\oli}{\overline}

\newcommand{\wti}{\widetilde}
\newcommand{\vphi}{\varphi}

\newcommand{\Si}{\Sigma}

\newcommand{\spt}{\mathrm{spt\,}}
\newcommand{\sptt}{\mathrm{spt}^2\,}

\newcommand{\dist}{\operatorname{dist}}

\DeclareMathOperator{\vol}{Vol}
\DeclareMathOperator{\area}{Area}
\DeclareMathOperator{\Index}{index}

\title[Morse index of min-max free boundary minimal hypersurfaces]{Min-max theory for free boundary minimal hypersurfaces II - general Morse index bounds and applications}

\date{\today}
\author{Qiang Guang}
\address{Mathematical Sciences Institute, The Australian National University, Canberra, ACT 2601, Australia}
\email{qiang.guang@anu.edu.au}

\author{Martin Man-chun Li}
\address{Department of Mathematics, The Chinese University of Hong Kong, Shatin, N.T., Hong Kong}
\email{martinli@math.cuhk.edu.hk}

\author{Zhichao Wang}
\address{Max-Planck Institute for Mathematics, Vivatsgasse 7, 
53111 Bonn, Germany}
\email{wangzhichaonk@gmail.com}

\author{Xin Zhou}
\address{Department of Mathematics, University of California Santa Barbara, Santa Barbara, CA 93106, USA}
\email{zhou@math.ucsb.edu}

\begin{document}
\begin{abstract}
For any smooth Riemannian metric on an $(n+1)$-dimensional compact manifold with boundary $(M,\partial M)$ where $3\leq (n+1)\leq 7$, we establish general upper bounds for the Morse index of free boundary minimal hypersurfaces produced by min-max theory in the Almgren-Pitts setting. We apply our Morse index estimates to prove that for almost every (in the $C^\infty$ Baire sense) Riemannan metric, the union of all compact, properly embedded free boundary minimal hypersurfaces is dense in $M$. If $\partial M$ is further assumed to have a strictly mean convex point, we show the existence of infinitely many compact, properly embedded free boundary minimal hypersurfaces whose boundaries are non-empty. Our results prove a conjecture of Yau for generic metrics in the free boundary setting.
\end{abstract}

\maketitle

\section{Introduction}
\label{S:Intro}

In his celebrated 1982 Problem Section, S.-T. Yau raised the following question:

\begin{question}[\cite{Yau82}]
\label{Q:Yau}
Does every closed three-dimensional Riemannian manifold $(M^3,g)$ contain infinitely many (immersed) minimal surfaces?
\end{question}

Back in 1960s, Almgren \cite{Alm62, Alm65} initiated an ambitious program to find minimal varieties, in arbitrary dimensions and codimensions, inside any compact Riemannian manifold (with or without boundary) using variational methods. He proved that weak solutions, in the sense of stationary varifolds, always exist. The interior regularity theory for codimension one hypersurfaces was developed about twenty years later by Pitts \cite{Pi} and Schoen-Simon \cite{SS}. As a consequence, they showed that in any closed manifold $(M^{n+1},g)$ there exists at least one embedded closed minimal hypersurface, which is smooth except possibly along a singular set of Hausdorff codimension at least 7. These fascinating results partly motivated \textbf{Question \ref{Q:Yau}} asked by Yau. In a very recent work \cite{LZ16}, the second and the last author developed a version of min-max theory for manifolds with boundary and proved up-to-the-boundary regularity for the free boundary minimal hypersurfaces produced by their theory, hence completing the program set out by Almgren in the hypersurface case. 

The foundational work of Almgren and Pitts left open an important (and very difficult) question of determining the Morse index of such minimal hypersurfaces produced by min-max methods. According to finite dimensional Morse theory, one expects the Morse index of the critical point produced using $k$-parameter families should be at most $k$. There had been no progress to this question for almost thirty years until the recent striking advances led by Marques and Neves. For instance, a precise control of the Morse index plays a significant role in their remarkable solution to the Willmore conjecture \cite{MN1}. Later in \cite{MN16}, they established the general Morse index upper bounds for minimal hypersurfaces produced by Almgren-Pitts theory for compact manifolds \emph{without boundary} (the one-parameter case was studied earlier in the works of Marques-Neves \cite{MN12} and the last author \cite{Zhou15, Zhou17}). In \cite{MN18}, they raised the question where similar index bounds hold in other min-max constructions as e.g. in \cite{CL16,DeRa18,DeTa,Mon16,ZZ17}. 

In this paper, we prove general Morse index upper bounds analogous to the ones in \cite{MN16} arising from the min-max theory developed in \cite{LZ16} for compact manifolds \emph{with boundary} in the Almgren-Pitts setting. Our main result can be stated roughly as follow:

\begin{theorem}[General index upper bound, simplified version]
If $\Sigma$ is the min-max free boundary minimal hypersurface in a compact Riemannian manifold with boundary produced by min-max over a $k$-dimensional homotopy class, then 
\[ \Index (\spt (\Sigma)) \leq k.\]
Here, the Morse index of the support of $\Sigma=m_1 \Sigma_1 + \cdots + m_k \Sigma_k$ is defined to be the sum of the indices of its components
\[ \Index (\spt(\Sigma)) := \Index (\Sigma_1) +\cdots + \Index (\Sigma_k).\] 
\end{theorem}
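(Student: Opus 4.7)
The plan is to follow the overall scheme of Marques--Neves \cite{MN16} in the closed case and argue by contradiction: assuming $\Index(\spt(\Sigma))\geq k+1$, I would build a continuous $(k+1)$-parameter family of ambient diffeomorphisms that strictly decreases area near $\Sigma$, and then splice this family into an optimal $k$-sweepout to produce a competitor whose max-area is strictly smaller than the min-max width, contradicting the definition of $\Sigma$. Throughout, the free boundary min-max theory of \cite{LZ16} provides both the existence of $\Sigma$ with the correct stationarity (free boundary minimal, with appropriate almost-minimizing regularity) and the $F$-metric structure on the space of relative cycles that will be needed to interpolate families.

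\textbf{Local deformation.} Write $\spt(\Sigma)=\Si_1\cup\cdots\cup\Si_N$. Under the contradiction hypothesis, the direct sum of eigenspaces of the free boundary Jacobi operator on the $\Si_i$ corresponding to negative eigenvalues has total dimension at least $k+1$; here the Jacobi quadratic form is the standard one, with the Robin-type boundary term involving the second fundamental form of $\partial M$, as is natural in the free boundary setting. I would extend a basis of negative eigen-sections to compactly supported vector fields on $M$ tangent to $\partial M$ along $\partial M$, exponentiate to get a smooth family $\{F_v\}_{v\in \overline{B^{k+1}}}$ of diffeomorphisms of $M$ preserving $\partial M$, and use the second variation formula together with continuity in $v$ to obtain a neighborhood $U$ of $\spt(\Si)$ and constants $\eps,\delta>0$ such that
\[
\mathbf{M}(F_v(T))\le \mathbf{M}(T)-\delta|v|^2
\]
for every relative integral $n$-cycle $T$ with $\spt(T)\subset U$ and $\mathbf{M}(T)\le\mathbf{M}(\Si)+\eps$, and whenever $|v|$ is small. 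This is the free-boundary analogue of the key deformation lemma of \cite{MN16}; the extra ingredient compared with the closed case is the boundary contribution in the second variation, which forces the variation fields to be tangential along $\partial M$ and justifies the use of $F_v$ preserving $\partial M$.

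\textbf{Globalization and contradiction.} With the local deformation in hand, the next step is to produce from any pulled-tight minimizing sequence $\{\Phi_j\}$ of $k$-parameter $\mathbf{p}$-sweepouts a competitor $\{\wti\Phi_j\}$ whose max-mass is strictly smaller than the width. The construction proceeds by: (i) a deformation theorem in the space of relative cycles that pushes any $\Phi_j(x)$ close to $\Si$ inward by some $F_{v(x)}$ and leaves $\Phi_j(x)$ unchanged if its support is far from $\spt(\Si)$; (ii) a combinatorial/continuity step that chooses the parameter $v(x)\in\overline{B^{k+1}}$ continuously in $x$ so that the whole family $\wti\Phi_j$ is again a continuous map from the parameter space $X^k$ into the space of relative cycles, preserving the nontriviality of $\Phi_j^*\oli{\la}^k$. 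The nontriviality is where the dimension mismatch bites: a continuous map $X^k\to \overline{B^{k+1}}\setminus\{0\}$ composed with a radial retraction onto $S^k$ must have degree zero on top-dimensional cells of $X^k$, which is what allows us to arrange $v(x)\neq 0$ somewhere nontrivial and hence obtain a strict mass decrease uniformly across the sweepout. Combined with the mass bound from (i), this yields $\sup_x \mathbf{M}(\wti\Phi_j(x))\le \sup_x\mathbf{M}(\Phi_j(x))-\tfrac{\delta}{2}$ for large $j$, contradicting the definition of the width.

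\textbf{Main obstacle.} The hardest step is part (i) of the globalization, namely the free boundary interpolation/deformation theorem in the space of relative cycles, i.e.\ showing that a given discrete family of relative cycles close to $\Si$ in the $\mathbf{F}$-metric can be continuously deformed by $F_{v(\cdot)}$ while remaining in the almost minimizing class and while controlling mass by the local deformation estimate above. In the closed case this uses Pitts' combinatorial argument and the Almgren isomorphism refined by Marques--Neves; in our setting these must be carried out for \emph{relative} cycles with free boundary, which requires reworking the discretization, interpolation, and pull-tight arguments from \cite{LZ16} with a quantitative estimate uniform in the parameter $v$. The rest of the proof is formally parallel to \cite{MN16}, but this free boundary interpolation ingredient is where the genuinely new analytic work lies.
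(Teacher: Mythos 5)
Your high-level reading of the landscape is correct: the proof is modeled on \cite{MN16}, the new analytic work is in the free-boundary deformation/interpolation machinery, and the local $(k+1)$-parameter family $\{F_v\}$ of boundary-preserving diffeomorphisms with strictly concave area functional $A^V(v)=\|(F_v)_\#V\|(M)$ is exactly the notion of $(k+1)$-unstable used in the paper. You also correctly flag that the second variation must use vector fields tangent to $\partial M$.

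However, the global contradiction structure you propose does not work as stated, and this is a genuine gap, not a detail. You claim that pushing an optimal sweepout away from one high-index $\Sigma$ gives a competitor whose max-mass is \emph{strictly smaller} than the width. This is false in the multi-parameter setting: the Deformation Theorem (Theorem \ref{thm:deform}) only moves the image of $\{\Phi_i\}$ out of an $\mf F$-neighborhood of a single $(k+1)$-unstable varifold $\Sigma$ while \emph{not increasing} the max mass; the width $L$ is still achieved, just by some other varifold in the critical set. Deforming away from one $\Sigma$ gives no strict decrease, and no immediate contradiction. The actual argument must therefore handle \emph{all} candidate limits of index $\geq k+1$ simultaneously. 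This is where the missing ingredient enters: under a bumpy metric, the set of FBMHs with area $\leq\Lambda$ and index $\leq I$ is countable (Proposition \ref{prop:countablity of W}). One then iterates the Deformation Theorem over this countable list, taking a diagonal sequence, to produce a min-max sequence whose critical set avoids every FBMH of index $\geq k+1$ (Theorem \ref{thm:index:bumpy}). The general-metric case follows by approximating by bumpy metrics and invoking the compactness theorem of \cite{GWZ18} (Theorem \ref{thm:index estimates}). Your proposal mentions neither the bumpy-metric genericity, nor the countability, nor this two-step (bumpy then general) structure, and without them the argument cannot close.

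A secondary inaccuracy: the combinatorial step is not about preserving nontriviality of $\Phi_j^*\bar\lambda^k$, but about staying in the same homotopy class of maps into $\mc Z_n(M,\partial M;\mf F;G)$, which is a stronger and more directly useful statement; and the ``degree zero'' phrasing is off. What is actually used is the transversality fact that the graph $\{(x,m(|\Phi_i(x)|))\}$ and the zero section $\{(x,0)\}$, each of dimension $\leq k$, can be separated inside the $(2k+1)$-dimensional space $X\times\overline B^{k+1}$, which permits a continuous choice of $v(x)$ avoiding the critical point $m(|\Phi_i(x)|)$ of $A^{|\Phi_i(x)|}$, after which the gradient flow of $A^V$ (not a raw quadratic estimate $\mf M(F_v(T))\leq\mf M(T)-\delta|v|^2$) yields the needed mass bounds.
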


A more precise statement can be found in Theorem \ref{T:main} in Section \ref{S:main}. We will explain the technical aspects of our main result later towards the end of this section. Before that, we present a few applications of our general Morse index upper bounds.

\subsection*{A. Song's proof of Yau's conjecture for closed Riemannian manifolds}

In the last few years, we have witnessed substantial progress towards Yau's conjecture \textbf{Question \ref{Q:Yau}} leading to a complete solution by Song \cite{Song18} for \emph{closed} Riemannian manifolds. Marques and Neves \cite{MN17} made the first progress by settling Yau's conjecture for closed manifolds with positive Ricci curvature, or more generally, for closed manifolds satisfying the ``Embedded Frankel Property''. Song was able to localize the arguments in \cite{MN17} to produce infinitely many minimal hypersurfaces inside any domain $\Omega$ bounded by stable minimal hypersurfaces. The minimal hypersurfaces he constructed are limits of free boundary minimal hypersurfaces obtained from the min-max theory developed in \cite{LZ16}. As pointed out in \cite[\S 2.3]{Song18}, it was not known at that time whether the $p$-width $\omega_p(M,g)$ of a compact manifold with non-empty boundary is achieved by an integral varifold since the free-boundary analogue of the index bounds in \cite{MN16} was not yet available in the literature. Our general Morse index upper bounds together with the compactness theorem in \cite{GWZ18} provides the missing piece (see Proposition \ref{P:width}), hence simplifying some of the arguments in \cite{Song18}.

\subsection*{Density of free boundary minimal hypersurfaces for generic metrics}

Using the Weyl Law for the volume spectrum \cite{LMN16}, Irie, Marques and Neves \cite{IMN17} proved Yau's conjecture for generic metrics. In fact, they proved a stronger property that the union of all closed, smoothly embedded minimal hypersurfaces is dense in any closed manifold $M$ with a generic (in the $C^\infty$ Baire sense) Riemannian metric $g$. As an application of our main index estimates, we prove the same result for compact manifolds with boundary.

\begin{theorem}(Density of minimal hypersurfaces)
\label{T:density}
Let $(M^{n+1},\partial M)$ be a compact manifold with boundary and $3 \leq (n+1) \leq 7$. Then for a $C^\infty$-generic Riemannian metric $g$ on $M$, the union of all compact, properly embedded, free boundary minimal hypersurfaces is dense.
\end{theorem}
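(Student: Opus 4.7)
My plan is to follow the Baire category argument of Irie--Marques--Neves \cite{IMN17}, adapted to the free boundary setting, with the general Morse index upper bounds of Theorem \ref{T:main} supplying the needed realization statement for widths. Fix a countable basis $\{U_i\}_{i\in\mb{N}}$ of the topology of $M$ and define $\mc{V}_i$ to be the set of smooth metrics $g$ on $M$ for which some compact, properly embedded free boundary minimal hypersurface meets $U_i$. It suffices to show that each $\mc{V}_i$ is open and dense in the $C^\infty$ topology, for then $\bigcap_i \mc{V}_i$ is a dense $G_\delta$ and every metric in this intersection has the desired density property.

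Openness of $\mc{V}_i$ is a standard consequence of the compactness theorem of \cite{GWZ18} together with Theorem \ref{T:main}: given $g\in\mc{V}_i$ with witness $\Sigma$, I would produce for each $g'$ sufficiently close to $g$ a nearby free boundary minimal hypersurface of $(M,g')$, either by an implicit function theorem argument when $\Sigma$ is nondegenerate, or by running a min-max construction detecting $\Sigma$ whose widths vary continuously in the metric and whose output has uniformly bounded area and index. In either case, the newly produced hypersurface still intersects $U_i$ provided $g'$ is close enough to $g$.

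For density, fix $g$ in the closure of $\mc{V}_i^c$ and a $C^\infty$-neighborhood $\mc{U}$ of $g$; I aim to locate some $g'\in\mc{U}\cap\mc{V}_i$. Suppose for contradiction that $\mc{U}\subset\mc{V}_i^c$. Proposition \ref{P:width} — which combines Theorem \ref{T:main} with the compactness theorem of \cite{GWZ18} — guarantees that for every $g'\in\mc{U}$ and every $p\in\mb{N}$, the $p$-width $\omega_p(M,g')$ is realized as the area of a smooth, compact, properly embedded free boundary minimal hypersurface $\Sigma_{p,g'}$ with $\Index(\Sigma_{p,g'})\le p$. By our assumption, $\Sigma_{p,g'}\subset M\setminus U_i$. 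Choose a perturbation $g_t=(1+th)g$ with $h\ge0$ a smooth bump function supported in a ball $B\Subset U_i$ with $h\not\equiv 0$, so that $g_t\in\mc{U}$ for all small $t\ge 0$ while $g_t\equiv g$ outside $B$. Since $\Sigma_{p,g_t}$ avoids $B$, its $g$-area and its $g_t$-area coincide and equal $\omega_p(M,g_t)$. Continuity of $\omega_p$ in the metric, monotonicity $\omega_p(M,g_t)\ge\omega_p(M,g_s)$ for $t\ge s$, and the local discreteness of the area values of $g$-free boundary minimal hypersurfaces of index at most $p$ (again via \cite{GWZ18}) together force $\omega_p(M,g_t)$ to be constant in $t$. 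On the other hand, the Weyl Law for the volume spectrum \cite{LMN16}, transplanted to the free boundary setting as in \cite{Song18}, yields $\omega_p(M,g_t)\sim c(n)\,p^{1/(n+1)}\vol(M,g_t)^{n/(n+1)}$ as $p\to\infty$, while $\vol(M,g_t)$ is strictly increasing in $t$ — a contradiction for $p$ large.

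The main obstacle is precisely the density step, and specifically the need for widths to be attained by honest smooth free boundary minimal hypersurfaces with quantitative control on their indices. This is exactly what Theorem \ref{T:main} supplies via Proposition \ref{P:width}: without the general index upper bound, one cannot invoke \cite{GWZ18} to promote a min-max sequence to a stationary integral varifold supported on a smooth free boundary minimal hypersurface, and the Weyl-law contradiction collapses. Secondary technical points — continuous dependence of $\omega_p$ on the metric, the Weyl law in the free boundary setting, and the local finiteness of the area spectrum of free boundary minimal hypersurfaces with bounded index — follow their closed-manifold analogues with only cosmetic modifications near $\partial M$.
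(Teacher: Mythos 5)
Your overall strategy matches the paper's, which follows the Irie--Marques--Neves Baire category argument. However, there are three genuine gaps.

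First, and most importantly, your density step invokes ``the local discreteness of the area values of $g$-free boundary minimal hypersurfaces of index at most $p$ (again via \cite{GWZ18}).'' This is false for a general metric $g$: the compactness theorem gives sequential compactness but says nothing about discreteness or countability of the area spectrum, and a non-bumpy metric may well carry continuous families of free boundary minimal hypersurfaces. The argument requires first replacing $g$ by a \emph{bumpy} metric $g'$ in the given $C^\infty$-neighborhood, using the Bumpy Metrics Theorem of White and Ambrozio--Carlotto--Sharp, and only then does one have countability of the relevant area set via the generic countability result (Proposition~\ref{prop:countablity of W}, which the paper proves precisely for this purpose). Continuity and monotonicity of $t\mapsto\omega_p(M,g_t)$ then force constancy because a continuous map into a countable subset of $\mathbb{R}$ is locally constant; this step does not need (and cannot get) any discreteness.

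Second, you assert that Proposition~\ref{P:width} realizes the $p$-width by a \emph{properly} embedded free boundary minimal hypersurface. It does not: the min-max output is only \emph{almost} properly embedded (the support may touch $\partial M$ in its interior). Since Theorem~\ref{T:density} asserts density within the strictly smaller class of \emph{properly} embedded hypersurfaces, one has to separately upgrade. The paper handles this via Proposition~\ref{prop:perturb to be proper}, which perturbs both the metric and the hypersurface (by pushing $\partial M$ inward past the touching set) to turn an almost properly embedded witness into a properly embedded one with a nearby metric. Without this step your witness need not belong to the advertised class.

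Third, your openness argument is underdeveloped. Your set $\mc V_i$ consists of metrics with \emph{some} properly embedded free boundary minimal hypersurface meeting $U_i$, with no non-degeneracy requirement. For a degenerate witness $\Sigma$, the implicit-function-theorem argument fails, and the alternative ``running a min-max construction detecting $\Sigma$'' is not an argument (one cannot in general cook up a min-max procedure whose output is close to a given stationary varifold for nearby metrics). The paper avoids this by defining $\mc M_U$ to require a \emph{non-degenerate} properly embedded witness; openness is then immediate from White's structure theorem (free-boundary version in \cite{ACS17}), and density of $\mc M_U$ is what the Weyl-law argument plus Propositions~\ref{prop:free:non-degenerate} and \ref{prop:perturb to be proper} actually establish.
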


In particular, this provides an affirmative answer in the generic case to Yau's conjecture \textbf{Question \ref{Q:Yau}} for compact manifolds \emph{with boundary}. In fact, we will prove a much stronger property (Proposition \ref{conj:free:dense}) that there are infinitely many compact, properly embedded, free boundary minimal hypersurfaces intersecting any given relatively open set in $M$. Note that with the genericity assumption, we are even able to prove the density results within the smaller class of \emph{properly} embedded free boundary minimal hypersurfaces instead of the \emph{almost properly} embedded ones (see \cite[Definition 2.6]{LZ16} for the definitions). This is achieved by a perturbation argument in Proposition \ref{prop:perturb to be proper}.

\subsection*{Existence of minimal hypersurfaces with non-empty free boundary}

Our last application addresses a major problem in many of the constructions of free boundary minimal surfaces, as for example in \cite{Fra00, LZ16}, that the minimal surface produced may in fact be closed (i.e. \emph{without boundary}). In \cite{LZ16}, the second and the last author proved that there exist infinitely many \emph{properly} embedded free boundary minimal hypersurfaces, provided that the ambient manifold is compact with nonnegative Ricci curvature and strictly convex boundary (note that no \emph{closed} minimal hypersurfaces exist in such manifolds by \cite[Lemma 2.2]{FrLi}). Without any topological or curvature assumptions, it is in general very difficult to prevent the free boundary components from degenerating through the limit process (see e.g. \cite{ACS17} and \cite{GWZ18, Wang19}). Making use of our strong density result (Proposition \ref{conj:free:dense}), we are able to prove the same result by merely assuming \emph{strict mean convexity at one point} of the boundary $\partial M$ for a generic metric.

\begin{theorem}[Non-trivial free boundary]
\label{T:boundary-exist}
Let $(M^{n+1},\partial M)$ be a compact manifold with non-empty boundary and $3\leq (n+1)\leq 7$, equipped with a generic Riemannian metric $g$ as in Theorem \ref{T:density}. Suppose that $\partial M$ has a strictly mean convex point $x$, i.e. $H(x)>0$. Then there exist infinitely many distinct compact, smooth, properly embedded minimal hypersurfaces in $M$ with \emph{non-empty} free boundary. 
\end{theorem}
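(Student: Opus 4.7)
The plan is to combine the strong density result (Proposition~\ref{conj:free:dense}) with a barrier argument at $x$ to ensure that the densely produced free boundary minimal hypersurfaces cannot all be closed.

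Since $H(x) > 0$ and the mean curvature is continuous, choose a relatively open neighborhood $W$ of $x$ in $\partial M$ on which $H \geq H_0 > 0$, and a one-sided tubular neighborhood $N \subset M$ of $W$ foliated by equidistant hypersurfaces $\{V_t\}_{t \in [0, t_0]}$, $V_0 = W$, each with mean curvature at least $H_0/2$ with respect to the unit normal pointing into $M$ (possible for $t_0$ sufficiently small by continuity of the second fundamental form of $\partial M$). Fix a relatively open subset $U \subset M$ with $x \in \overline U$, $\overline U \subset N$, and $\overline U \cap \partial M \Subset W$. By Proposition~\ref{conj:free:dense}, there is an infinite sequence $\{\Sigma_k\}_{k \geq 1}$ of pairwise distinct compact, properly embedded, free boundary minimal hypersurfaces with $\Sigma_k \cap U \neq \emptyset$ for every $k$.

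The crux is to show that all but finitely many $\Sigma_k$ satisfy $\partial \Sigma_k \neq \emptyset$. Arguing by contradiction, pass to a subsequence consisting entirely of closed minimal hypersurfaces; pick $p_k \in \Sigma_k \cap U$, and after further shrinking $U$ around $x$ and reapplying Proposition~\ref{conj:free:dense}, arrange $p_k \to x$. Since each $\Sigma_k$ is closed and lies in the interior of $M$, the infimum $t_k^\ast = \inf_{\Sigma_k \cap N} d(\cdot, \partial M)$ is strictly positive, and $t_k^\ast \leq d(p_k, \partial M) \to 0$. If this infimum is attained at an interior point $q_k$ of $\Sigma_k \cap N$, then $\Sigma_k$ is tangent at $q_k$ to the mean convex leaf $V_{t_k^\ast}$ from the far side $\{d(\cdot, \partial M) \geq t_k^\ast\}$, and the strong maximum principle -- applied to $\Sigma_k$ minimal against the barrier $V_{t_k^\ast}$ of mean curvature at least $H_0/2 > 0$ -- yields a contradiction. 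Otherwise the infimum is attained on the lateral part of $\partial N$, in which case the sequence $\Sigma_k$ approaches $\partial M$ from the interior near $x$; applying the compactness theorem of \cite{GWZ18} together with the Morse index bound from Theorem~\ref{T:main} (translated via Proposition~\ref{P:width} and the min-max origin of the $\Sigma_k$), we extract a subsequential varifold limit $V$ whose support contains $x$, has $\partial V = \emptyset$, and lies in $\overline M$. The maximum principle for stationary integral varifolds against the mean convex foliation $\{V_t\}$ then forces $\spt(V)$ to be contained in $\partial M$ near $x$, contradicting $V$ being a limit of hypersurfaces lying in the interior of $M$.

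Combining with Proposition~\ref{prop:perturb to be proper} to guarantee properness of each $\Sigma_k$, the surviving hypersurfaces yield the desired infinite family of compact, properly embedded minimal hypersurfaces with non-empty free boundary. The main obstacle is the lateral-boundary case just discussed: ruling out that closed minimal hypersurfaces can enter and exit the tubular neighborhood $N$ without ever touching $\partial M$. This step crucially uses both the compactness theorem of \cite{GWZ18} and a boundary maximum principle for stationary varifolds at the mean convex point $x$, and is where the genericity hypothesis on $g$ together with the strict mean convexity at $x$ is indispensable.
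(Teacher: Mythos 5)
Your overall strategy matches the paper's: combine the density result with a maximum principle at the strictly mean convex point. However, the paper's execution is much shorter, and your Case 2 (lateral tangency) has a genuine gap. The paper argues by contradiction: assuming only finitely many properly embedded FBMHs have non-empty boundary, one finds $p\in\partial M$ with $H(p)>0$ and $r>0$ so that none of them meets $B_r(p)$; then the maximum principle of White \cite[Theorem 1]{Wh10} gives an $\epsilon\in(0,r)$ so that any minimal hypersurface with \emph{no boundary in $B_r(p)$} stays at distance $>\epsilon$ from $p$; density then produces a properly embedded FBMH $\Gamma$ with $\Gamma\cap B_\epsilon(p)\neq\emptyset$, which therefore must have boundary inside $B_r(p)$ — contradicting the choice of $p$ and $r$. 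White's theorem, formulated for stationary varifolds, packages the barrier argument and avoids any case analysis.

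Your Case 2 has two problems. First, you invoke the compactness theorem of \cite{GWZ18} with the Morse index bounds from Theorem~\ref{T:main}, but the hypersurfaces $\Sigma_k$ supplied by Proposition~\ref{conj:free:dense} do not come with a uniform bound on index or area: the widths $\omega_k$ used to produce them have $k\to\infty$, so the index and area are a priori unbounded and no subsequential limit in the sense of \cite{GWZ18} can be extracted. Second, and more fundamentally, when the infimum $t_k^\ast$ is attained on the lateral part of $\partial N$, the relevant closest-approach points lie on a fixed hypersurface at bounded distance from $x$, where you have no strict mean-convexity control on $\partial M$; so even a varifold limit need not touch $\partial M$ near $x$, and the contradiction you describe does not materialize. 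The fix is to arrange the barrier region so that its \emph{entire} boundary is mean convex — e.g., replace $N$ by a small smooth mean convex domain containing $B_\epsilon(p)\cap M$ and contained in $B_r(p)\cap M$ — which eliminates the lateral case and is precisely what White's theorem encapsulates. With that change, your Case 1 argument (strong maximum principle against the foliation) already suffices, and the compactness step can be dropped entirely.
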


\begin{proof} We prove by contradiction. Assuming on the contrary that there are only finitely many distinct compact, smooth, properly embedded minimal hypersurfaces in $M$ with \emph{non-empty} free boundary, we can always take  a point $p\in\partial M$ and $r>0$ so that $H(p)>0$ and no such hypersurface intersects $B_r(p)$, where $B_r(p)$ is the geodesic ball in $M$ with center $p$ and radius $r$. 

Since $H(p)>0$,  the maximum principle of White (\cite[Theorem 1]{Wh10}) implies that there exists an $\epsilon>0$ (we may assume $\epsilon<r$) such that any minimal hypersurface $\Sigma$ having no boundary inside $B_r(p)$ must satisfy $\text{dist}(p,\Sigma)>\epsilon$. By Theorem \ref{T:density}, we know that there exists a properly embedded free boundary minimal hypersurface $\Gamma$ with $\Gamma \cap B_{\epsilon}(p)\neq \emptyset$. By the choice of $\epsilon$, it follow that $\Gamma$ has non-empty free boundary in $B_\epsilon(p)$. However, this contradicts our choice of $p$ and $r$. 
\end{proof}

\begin{remark}
From the proof of Theorem \ref{T:boundary-exist} we see that the union of the boundary of all compact, properly embedded, free boundary minimal hypersurfaces is also dense (as a subset of $\partial M$) in the strictly mean convex portion of $\partial M$. In particular, if $\partial M$ is strictly mean convex everywhere, then the union of their free boundaries is dense as a subset of $\partial M$.
\end{remark}




\subsection*{Main ideas of the proof}

We now explain the major ideas in proving the general index upper bound in Theorem \ref{T:main}. While our arguments follow the basic strategy of \cite{MN16}, the presence of a boundary, however, poses additional difficulties that have to be overcome. 

First of all, the notion of Morse index for a free boundary minimal hypersurface $\Sigma$ in $M$ is a subtle issue when $\Sigma$ is \emph{improper} (in other words, the touching set $(\Sigma \cap \partial M) \setminus \partial \Sigma$ is non-empty). Although it was shown in \cite{LZ16} that the hypersurface $\Sigma$ is smooth and minimal even across the touching set, a-priori it is not clear whether one should consider second variations moving the touching set. Motivated by the compactness result in \cite{GWZ18}, we argue that the appropriate notion of Morse index should only count the negative second variations supported away from the touching set. This is natural already at the level of first variation as the stationarity of a varifold in $M$ are taken with respect to the diffeomorphisms of $M$ generated by vector fields \emph{tangent to $\partial M$}. As $\Sigma$ is necessarily tangent to $\partial M$ along the touching set, any normal variations on $\Sigma$ should vanish on the touching set.

Second, the min-max theory for compact manifolds with boundary developed in \cite{LZ16} is formulated in terms of the space of (equivalence classes) of relative cycles consisting of \emph{integer-rectifiable} currents; while the Weyl Law for the volume spectrum estabilshed in \cite{LMN16} was formulated using the space of relative cycles consisting of \emph{integral} currents. The first formulation has the advantage that the regularity result of Gr\"{u}ter \cite{Gr87} can be readily deployed (the boundary regularity was irrelevant in the work of \cite{LMN16}) and the second formulation follows the original setup in Almgren's isomorphism theorem \cite{Alm62}. Since we need both results in this paper, we give a rigorous proof of the equivalence of two formulations in Section \ref{sec:equivalence}.

Third, the min-max theory in \cite{LZ16} is formulated in terms of sequences of maps defined on the vertices of finer and finer grids measured with respect to the mass topology. While this is essential for the regularity theory, for applications it is more useful to formulate a continuous min-max theory where maps are defined on the full parameter space and are continuous with respect to the $\mf F$-topology. In Section \ref{sec:min-max theory}, we explain how to use the original discretized setting to obtain a min-max theory for maps that are continuous in the $\mf F$-topology as above. A similar construction for the mass topology was done by the second and the last author in \cite{LZ16}. We prove the Min-Max Theorem (Theorem \ref{thm:minmax}) and Deformation Theorem (Theorem \ref{thm:deform}) as in \cite{MN16} in the continuous setting.

Finally, another crucial ingredient in proving the general Morse index bounds in \cite{MN16} is Sharp's generic finiteness result \cite{Sharp17} for minimal hypersurfaces with bounded index and area. Such a generic finiteness result was very difficult to establish for free boundary minimal hypersurfaces (see \cite{GWZ18, Wang19}) due to boundary degenerations and multiplicities. For our purpose of this paper, we only need the countability of free boundary minimal hypersurfaces with bounded index and area in any compact manifold with boundary equipped with a bumpy metric. We give a more direct inductive proof of this weaker result in \S \ref{subsec:counbtability}. Combining the Deformation Theorem with the generic countability result, we argue as in \cite{MN16} to prove the desired index bounds.

The organization of this paper is as follows. In Section \ref{S:main}, we set up some basic notations for the rest of the article and give a precise statement of our general Morse index bounds. In Section \ref{sec:equivalence}, we recall the two formulations of min-max theory for manifolds with boundary and prove their equivalence. In Section \ref{sec:min-max theory}, we describe the min-max theory in a continuous setting using the $\mf F$-topology and prove the Min-max theorem (Theorem \ref{thm:minmax}). In Section \ref{sec:defrom}, we prove the generic countability result (Proposition \ref{prop:countablity of W}) and the Deformation Theorem (Theorem \ref{thm:deform}), which is similar to \cite[Deformation Theorem A]{MN16} with slightly modifications. Using results in earlier sections, we prove our general Morse index upper bounds in Section \ref{sec:index estimate}. Finally in Section \ref{sec:density}, we give the proof of the density theorem (Theorem \ref{T:density}) as a corollary. In the appendix, we recall a construction for the logarithmic cut-off trick used in this paper.

\begin{acknowledgements}
We would like to thank Prof. Richard Schoen for his constant support and encouragement. The first author is partially supported  by an AMS-Simons Travel Grant. The second author is partially supported by a research grant from the Research Grants Council of the Hong Kong Special Administrative Region, China [Project No.: CUHK 14323516] and CUHK Direct Grant [Project Code:4053291].  Part of this work was done when the last author visited the Institute for Advanced Study under the support by the NSF Grant DMS-1638352, and he would like to thank IAS for their hospitality. The last author is also partially supported by NSF grant DMS-1811293 and an Alfred P. Sloan Research Fellowship.
\end{acknowledgements}


\section{Definitions and main results}
\label{S:main}

We now give a more precise statement of our general Morse index upper bounds in this section. Let $(M^{n+1},\partial M,g)$ be an $(n+1)$-dimensional compact Riemannian manifold with boundary and $3\leq (n+1)\leq 7$. Denote $G$ to be either the group  $\mb Z$ or $\mb Z_2$. Let $X$ be a simplicial complex of dimension $k$ and $\Phi: X\rightarrow \mc Z_n(M^{n+1},\partial M;\mf F; G)$ be a continuous map. Here, $\mc Z_n(M,\partial M; G)$ is the space of integer rectifiable $n$-currents $T$ in $M$ with coefficients in $G$ and $\partial T \subset \partial M$, modulo an equivalence relation (see \S \ref{subsec:integer rectifiable currents}). The notation $\mc Z_n(M,\partial M; \mf F; G)$ indicates that the space $\mc Z_n(M,\partial M; G)$ is endowed with the $\mf F$-topology, to be defined later in \S \ref{subsec:integer rectifiable currents}. Basically, continuity in $\mf F$-topology means that continuity in both the flat and the varifold topologies. 

Let $\Pi$ denote the set of all continuous maps $\Psi:X\to \mc{Z}_n(M,\partial M;\mf{F};G) $ such that $\Phi$ and $\Psi$ are homotopic to each other in the flat topology. The {\em width} of $\Pi$ is defined to be the min-max invariant:
\[\mf L(\Pi) := \inf_{\Psi\in\Pi} \sup_{x\in X} \{\mf M(\Psi(x))\},\]
where $\mf M(\tau)$ denotes the mass of the equivalence class $\tau=[T] \in \mc Z_n(M,\partial M; G)$, which is equal to the $n$-dimensional area of the {\em canonical representative} of $\tau$ (see \S \ref{subsec:integer rectifiable currents}). Given a sequence $\{\Phi_i\}_{i\in\mb N} \subset \Pi$ of continuous maps from $X$ into $\mc Z_n (M,\partial M;\mf F;G)$, we set
\[\mf L(\{\Phi_i\}_{i\in\mb N}):= \limsup_{i\rightarrow\infty}\sup_{x\in X}\mf M(\Phi_i(x)).\]
When $\mf L(\{\Phi_i\}_{i\in\mb N}) = \mf L(\Pi)$, we say $\{\Phi_i\}_{i\in\mb N}$ is a \emph{min-max sequence in the homotopy class $\Pi$}. 

Our main result below (Theorem \ref{T:main}) says that the width $\mf L(\Pi)$ can be realized by an integral varifold $V$ which is \emph{stationary in $M$ with free boundary} (see \cite[Definition 2.1]{LZ16}) and supported on a finite union $\cup_{j=1}^N \Sigma_j$ of smooth, almost properly embedded, free boundary minimal hypersurfaces $\Sigma_j$ whose sum of indices is at most $k$, the dimension of the parameter space $X$. Recall from \cite[Definition 2.6]{LZ16} that $(\Sigma,\partial \Sigma) \subset (M,\partial M)$ is an \emph{almost properly embedded} hypersurface if $\Sigma$ is an embedded hypersurface in $M$ whose $\partial \Sigma$ is contained in $\partial M$. Such a hypersurface is called a \emph{free boundary minimal hypersurface} (FBMH) if the mean curvature of $\Sigma$ vanishes and $\Sigma$ meets $\partial M$ orthogonally along $\partial \Sigma$. Given an almost properly embedded FBMH $\Sigma$ in $M$, there are several notions of Morse indices (see \cite{GWZ18}) on $\Sigma$. 
The quadratic form of $\Sigma$ associated to the
second variation formula is defined as
\[Q_\Sigma(v,v):=\int_\Sigma \left(|\nabla^\perp v|^2- \mathrm{Ric}_M(v,v)-|A^\Sigma|^2|v|^2 \right)\,d\mu_\Sigma  - \int_{\partial \Sigma} h^{\partial M}(v,v)\,d\mu_{\partial \Sigma},
\]
where $v$ is a section of the normal bundle of $\Sigma$, $\mathrm{Ric}_M$ is the Ricci curvature of $M$, $A^\Sigma$ and $h$ are the second fundamental forms of the hypersurface $\Sigma$ and $\partial M$, respectively. Note that we do not need to assume $\Sigma$ to be \emph{two-sided}. Denoting the \emph{touching set of $\Sigma$ in $M$} by 
\[ \mathrm{Touch}(\Sigma):=(\Sigma\cap\partial M)\setminus\partial \Sigma,\]
we define the \emph{Morse index of $\Sigma$}, denoted by $\Index(\Sigma)$, as the maximal dimension of a linear subspace of sections of normal bundle $N\Sigma$ \emph{compactly supported in $\Sigma\setminus\mathrm{Touch}(\Sigma)$} such that the quadratic form $Q_\Sigma(v,v)$ is negative definite on this subspace. (Note that this is the same notion as the \emph{Morse index of $\Sigma$ on the proper subset} defined in \cite{GWZ18}.)

With the definitions above (and the notions regarding varifolds in \S \ref{sec:equivalence}), we can now state our main theorem on the general Morse index upper bounds for the min-max minimal hypersurfaces with free boundary. 
\begin{theorem}[General index upper bounds]
\label{T:main}
Let $(M^{n+1},g)$ be a smooth compact $(n+1)$-dimensional Riemannian manifold with boundary and $3 \leq (n+1) \leq 7$. Let $X$ be a simplicial complex of dimensional $k$ and $\Phi:X\to \mc{Z}_n(M,\partial M;\mf{F};G)$ be a continuous map. Denote $\Pi$ as the associated homotopy class of $\Phi$ with respect to the flat topology. Then there exists a integral varifold $V\in \mc{V}_n(M)$ such that 
\begin{enumerate}[label=(\roman*)]
\item $\|V\|(M)=\mf{L}(\Pi)$;
\item $V$ is stationary in $M$ with free boundary;
\item there exists $N\in \mb{N}$ and $m_i\in \mb{N}$, $1\leq i\leq N$, such that $V=\sum_{i=1}^N m_i|\Sigma_i|$, where each $\Sigma_i$ is a smooth, compact, connected, almost properly embedded free boundary minimal hypersurface in $M$. Moreover, 
\[\sum_{i=1}^N \Index(\Sigma_i)\leq k.\]
\end{enumerate}
\end{theorem}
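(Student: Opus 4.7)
The plan is to obtain parts (i) and (ii), together with the regularity portion of (iii), by direct appeal to the Min-Max Theorem (Theorem \ref{thm:minmax}) established in the continuous $\mf F$-topology setup of Section \ref{sec:min-max theory}. That theorem furnishes an integral varifold $V=\sum_{i=1}^N m_i|\Sigma_i|$ of mass $\mf L(\Pi)$, stationary in $M$ with free boundary, whose support is a finite disjoint union of smooth, compact, almost properly embedded free boundary minimal hypersurfaces. The substantive content of the theorem is therefore the index bound $\sum_i \Index(\Sigma_i)\leq k$.

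I would establish the index bound by contradiction, following the broad strategy of Marques--Neves \cite{MN16}. Suppose the bound fails for every $V$ arising from the Min-Max Theorem. Then for each min-max sequence $\{\Phi_i\}\subset \Pi$ realizing $\mf L(\Pi)$, every stationary integral varifold occurring as a varifold limit of $\Phi_i(x_i)$ with $\mf M(\Phi_i(x_i))\to \mf L(\Pi)$ has support carrying total Morse index at least $k+1$ (in the sense of negative directions of $Q_\Sigma$ compactly supported in $\Sigma\setminus\mathrm{Touch}(\Sigma)$). Invoking the generic countability of free boundary minimal hypersurfaces with bounded index and area (Proposition \ref{prop:countablity of W}), we may restrict attention to a countable family $\mathcal{W}$ of such candidate varifolds.

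Next, I would invoke the Deformation Theorem (Theorem \ref{thm:deform}). For each $V_0\in\mathcal{W}$, the $k+1$ linearly independent negative directions of $Q_{\spt(V_0)}$, being supported away from $\mathrm{Touch}(\spt V_0)$, can be extended to ambient vector fields on $M$ tangent to $\partial M$ and exponentiated to a $(k+1)$-parameter family of isotopies decreasing the mass strictly in an $\mf F$-neighborhood $U_{V_0}$ of $V_0$. Since $X$ has dimension $k$, a general-position argument allows one to homotope $\Phi_i|_{\Phi_i^{-1}(U_{V_0})}$ through $\mc Z_n(M,\partial M;\mf F;G)$ strictly off the mass level $\mf L(\Pi)$. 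Using the countability of $\mathcal{W}$ together with a partition-of-unity / exhaustion argument with respect to the $\mf F$-metric on cycles, these local deformations can be patched into a single homotopy $\Psi\in\Pi$ with $\sup_{x\in X}\mf M(\Psi(x))<\mf L(\Pi)$, contradicting the definition of the width.

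The main obstacle will be the careful accounting imposed by the free boundary. Because our notion of $\Index(\Sigma)$ only registers normal sections vanishing on $\mathrm{Touch}(\Sigma)$, the unstable variations used in the deformation step must be realized by ambient isotopies generated by vector fields tangent to $\partial M$, so that the induced one-parameter families preserve $\mc Z_n(M,\partial M;G)$ and match the class of variations against which stationarity is tested. A second difficulty is upgrading Sharp's generic finiteness \cite{Sharp17} to the generic countability needed here, in the presence of boundary degenerations and multiplicity issues highlighted in \cite{GWZ18,Wang19}; I would follow the direct inductive argument of \S \ref{subsec:counbtability} rather than attempt a full finiteness statement. Finally, ensuring that the patched deformation remains continuous in the $\mf F$-topology (not merely the flat topology) and lies in the same flat homotopy class $\Pi$ relies crucially on the continuous min-max framework developed in Sections \ref{sec:min-max theory} and \ref{sec:defrom}.
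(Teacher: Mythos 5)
Your overall outline — Min-Max Theorem for regularity, countability of high-index free boundary minimal hypersurfaces, and iterated applications of the Deformation Theorem — is the right set of ingredients, and is indeed the strategy the paper follows (adapted from \cite{MN16}). But two aspects of your argument, as stated, would not go through.

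First, the claimed contradiction is not available. You propose to patch the local deformations into a single map $\Psi\in\Pi$ with $\sup_{x\in X}\mf M(\Psi(x))<\mf L(\Pi)$. This cannot happen: by the very definition of the width as an infimum over $\Pi$, no element of $\Pi$ can have supremum mass strictly below $\mf L(\Pi)$. The Deformation Theorem (Theorem \ref{thm:deform}) only guarantees $\mf L(\{\Psi_i\})\leq L$, not a strict drop, because the mass is only pushed down on a small $\mf F$-neighborhood of each unstable varifold; away from those neighborhoods the sweepout is untouched and the supremum persists. The correct mechanism (cf.\ the proof of Theorem \ref{thm:index:bumpy}) is not to lower the width but to clear the \emph{critical set}: after iterating the Deformation Theorem over the countable family $\{\Sigma_1,\Sigma_2,\dots\}$ (high-index elements) together with the set $\mc W(r)$ and taking a diagonal subsequence, one obtains a min-max sequence $\{\Psi_l\}$ with $\mf C(\{\Psi_l\})\cap(\mc W^{k+1}\cup\mc W(r))=\emptyset$. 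One then re-applies Theorem \ref{thm:minmax} to $\{\Psi_l\}$: the Min-Max Theorem still produces a stationary varifold in $\mf C(\{\Psi_l\})$ with mass exactly $L$ and smooth FBMH support, and this varifold is forced to lie in $\mc W\setminus(\mc W^{k+1}\cup\mc W(r))$, i.e.\ to have index at most $k$. Letting $r\to 0$ and using compactness of \cite{GWZ18} finishes that case. The output of the argument is an existence statement, not a contradiction obtained by lowering the width.

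Second, you apply Proposition \ref{prop:countablity of W} as if it held for an arbitrary metric, but that countability result is established only for \emph{bumpy} metrics; for a general metric the set of stationary FBMHs of bounded index and area may well be uncountable. The paper therefore proves the index bound first for bumpy metrics (Theorem \ref{thm:index:bumpy}), and then derives the statement for an arbitrary metric $g$ by choosing bumpy metrics $g_j\to g$ smoothly (using \cite{ACS17}), extracting for each $j$ a min-max FBMH $V_j$ with $\|V_j\|=L_j$ and index at most $k$, noting the continuity of the width under metric convergence, and passing to the limit via the compactness theorem of \cite{GWZ18}. This two-step reduction is essential and is missing from your proposal.

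Your remarks on the free boundary difficulties — that negative directions must be supported away from the touching set and extended to ambient vector fields tangent to $\partial M$, and that the continuous $\mf F$-topology framework is needed so deformations stay in $\Pi$ — are on target and match the paper. But without repairing the two points above (replacing the impossible ``strictly below the width'' conclusion by the critical-set clearing argument, and inserting the bumpy-to-general reduction), the proof does not close.
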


We will separate the proof of Theorem \ref{T:main} into two parts. The first part (Theorem \ref{thm:minmax}) handles the regularity theory for the min-max theory in a continuous setting. The second part (Theorem \ref{thm:index estimates}) gives the required index bounds.

\section{Equivalence of two formulations}
\label{sec:equivalence}

In this section, we describe the two different formulations of min-max theory for manifolds with boundary introduced in \cite{LZ16} and \cite{LMN16}. The goal is to show that the two formulations are equivalent. The arguments are simple but a bit tedious. Readers can skip this section and refer back to the definitions later if necessary.

Let $(M^{n+1},g)$ be a smooth compact connected Riemannian manifold with nonempty boundary $\partial M$. Without loss of generality, we can regard $M$ as a compact domain of a closed Riemannian manifold $\wti{M}$ of the same dimension, which is isometrically embedded into $\mb{R}^L$ for some $L$ large enough. We recall some basic notations in geometric measure theory essentially following \cite{LZ16}.

We use $\mc{V}_k(M)$ to denote the closure of the space of $k$-dimensional rectifiable varifolds in $\mb{R}^L$ with support contained in $M$. Let $G$ be either the group $\mb{Z}$ or $\mb{Z}_2$. Let $\mc{R}_k(M;G)$ (resp. $\mc{R}_k(\partial M;G)$) be the space of $k$-dimensional rectifiable currents in $\mb{R}^L$ with coefficients in $G$ which are supported in $M$ (resp. in $\partial M$). Denote by $\spt T$ the support of $T\in\mc R_k(M;G)$. Given any $T\in \mc{R}_k(M;G)$, denote by $|T|$ and $\|T\|$ the integer rectifiable varifold and the Radon measure in $M$ associated with $T$, respectively. The mass norm and the flat metric on $\mc{R}_k(M;G)$ are denoted by $\mf{M}$ and $\mc{F}$ respectively; see \cite{Fed69}. As we can regard any $T\in\mc R_k(M;\mb Z_2)$ as an element in $\mc R_k(M;\mb Z)$, we use $\sptt T$ and $\mathrm{spt}^0\, T$ to denote the support of $T$ when regarded as an elements in $\mc R_k(M;\mb Z_2)$ and $\mc R_k(M;\mb Z)$ respectively. Similarly, we use $\mf M^2(T)$ and $\mf M^0(T)$ to denote their respective mass norms. 

\subsection{Formulation using integer rectifiable currents}
\label{subsec:integer rectifiable currents}

We now recall the formulation in \cite{LZ16} using equivalence classes of integer rectifiable currents. Let 
\begin{equation}
\label{eq:def from integer rect}
 Z_k(M,\partial M;G):=\{T\in \mc{R}_k(M;G) : \spt(\partial T)\subset \partial M \}.
 \end{equation}
We say that two elements $T,S\in Z_k(M,\partial M;G)$ are equivalent if $T-S\in \mc{R}_k(\partial M;G)$. We use $\mc{Z}_k(M,\partial M; G)$ to denote the space of all such equivalence classes. For any $\tau\in \mc{Z}_k(M,\partial M; G)$, we can find a unique $T\in \tau$ such that $T\llcorner \partial M=0$. We call such $T$ the \emph{canonical representative} of $\tau$ as in \cite{LZ16}. For any $\tau\in \mc{Z}_k(M,\partial M;G)$, its mass and flat norms are defined by 
\[\mf{M}(\tau):=\inf\{\mf{M}(T) : T\in \tau\}\quad \text{ and }\quad \mc{F}(\tau):=\inf\{\mc{F}(T) : T\in \tau\}.\]
The support of $\tau\in \mc{Z}_k(M,\partial M;G)$ is defined by 
\[\spt (\tau):=\bigcap_{T\in \tau}\spt (T).\]
By \cite[Lemma 3.3]{LZ16}, we know that for any $\tau \in \mc{Z}_k(M,\partial M;G)$, we have $\mf{M}(T)=\mf{M}(\tau)$ and $\spt (\tau)=\spt (T)$, where $T$ is the canonical representative of $\tau$.

Recall that the varifold distance function $\mf{F}$ on $\mc{V}_k(M)$ is defined in \cite[2.1 (19)]{Pi}, which induces the varifold weak topology on the set $\mc{V}_k(M)\cap \{V : \|V\|(M)\leq c\}$ for any $c$. We also need the $\mf{F}$-metric on $\mc{Z}_k(M,\partial M;G)$ defined as follows: for any $\tau, \sigma\in \mc{Z}_k(M,\partial M;G)$ with canonical representatives $T\in \tau$ and $S\in \sigma$, the $\mf{F}$-metric of $\tau$ and $\sigma$ is 
\[\mf{F}(\tau,\sigma):=\mc{F}(\tau-\sigma)+\mf{F}(|T|,|S|),\]
where $\mf{F}$ on the right hand side denotes the varifold distance on $\mc{V}_k(M)$. 

For any $\tau\in \mc{Z}_k(M,\partial M; G)$, we define $|\tau|$ to be $|T|$, where $T$ is the unique canonical representative of $\tau$ and $|T|$ is the rectifiable varifold corresponding to $T$.

We assume that $\mc{Z}_k(M,\partial M;G)$ have the flat topology induced by the flat metric. With the topology of mass norm or the $\mf{F}$-metric, the space will be denoted by  $\mc{Z}_k(M,\partial M;\mf{M};G)$ or  $\mc{Z}_k(M,\partial M;\mf{F};G)$.

\subsection{Formulation using integral currents}\label{subsec:using integral}

We now recall the formulation in \cite{LMN16} using equivalence classes of integral cycles. For $k\geq 1$, let $\mf I_k (M;\mb Z_2)$ denote those elements of $\mc R_k(M;\mb Z_2)$ whose boundary lies in $\mc R_{k-1}(M;\mb Z_2)$. The space $\mf I_k(\partial M;\mb Z_2)$ is defined similarly with $M$ replaced by $\partial M$. We also consider the space
\begin{equation}\label{eq:def from integral currents}
 Z_{k,rel}(M,\partial M;\mb Z_2):=\{T\in \mf I_k(M;\mb Z_2):\sptt \partial T\subset \partial M \}, 
 \end{equation}
endowed with the flat topology. We say that two elements $T, S \in Z_{k,rel}(M,\partial M;\mb Z_2)$ are equivalent if $T-S\in\mf I_k(\partial M;\mb Z_2)$ and the space of such equivalence classes is denoted by $\mc Z_{k,rel}(M,\partial M;\mb Z_2)$. The mass norm and the flat metric on this space are defined respectively as follows:
\[\mf{M}(\tau):=\inf\{\mf{M}(T) : T\in \tau\}\quad \text{ and }\quad \mc{F}(\tau):=\inf\{\mc{F}(T) : T\in \tau\},\]
where $\tau \in Z_{k,rel}(M,\partial M;\mb Z_2)$.

\subsection{Equivalence of two formulations}

We now prove that the spaces $\mc Z_n(M,\partial M;\mb Z_2)$ and $\mc Z_{n,rel}(M,\partial M;\mb Z_2)$ defined in (\ref{eq:def from integer rect}) and (\ref{eq:def from integral currents}) are isomorphic to each other. We begin by stating a preliminary lemma, which was proven in \cite{LZ16} for $\mb Z$ coefficients. The same proof actually works for $\mb Z_2$ coefficients as well (c.f. \cite[Theorem 2.3]{LMN16}).

\begin{lemma}[cf. {\cite[Lemma 3.8]{LZ16}}]
\label{lemma:appro by integral}
Given $T\in Z_k(M,\partial M;\mb Z_2)$, there exists a sequence $T_i\in Z_{k,rel}(M,\partial M;\mb Z_2)$ such that $T_i-T\in\mc R_{k}(\partial M;\mb Z_2)$ and  $\lim_{i\rightarrow\infty}\mf M(T_i-T)=0$ (and thus $\lim_{i\to \infty} \mc F(T_i-T)=0$).
\end{lemma}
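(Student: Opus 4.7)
The plan is to modify $T$ slightly in a thin collar of $\partial M$ via a radial pushforward. I would first replace $T$ by its canonical representative so that $\|T\|(\partial M)=0$ (this is harmless since any two representatives differ by an element of $\mc R_k(\partial M;\mb Z_2)$). Fix a collar $\Phi:\partial M\times[0,\eta)\to U\subset M$, write $d:=\mathrm{dist}(\cdot,\partial M)$, let $\pi:U\to\partial M$ be the smooth retraction associated to the collar (so $\pi|_{\partial M}=\mathrm{id}$), and set $U_t:=\{d\le t\}$. By the slicing theorem together with the coarea inequality $\int_0^\eta\mf M(\lb{T,d,t})\,dt\le\mf M(T\llcorner U)$, for a.e.\ $t\in(0,\eta)$ the slice $\sigma_t:=\lb{T,d,t}$ is a rectifiable $(k-1)$-current on $\{d=t\}$; choose any sequence $t_i\searrow 0$ through such good values. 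The candidate approximation is
\[
T_i \;:=\; T-\pi_\#\big(T\llcorner U_{t_i}\big).
\]

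The difference $T_i-T=-\pi_\#(T\llcorner U_{t_i})$ is a Lipschitz pushforward of a rectifiable current into $\partial M$, so it lies in $\mc R_k(\partial M;\mb Z_2)$; its mass satisfies
\[
\mf M(T_i-T) \;\le\; \mathrm{Lip}(\pi|_{U_{t_i}})^k\cdot\|T\|(U_{t_i}) \;\longrightarrow\; 0
\]
by the canonical-representative condition and inner regularity of $\|T\|$ at $\partial M$. The remaining (and crucial) step is to verify $T_i\in Z_{k,rel}(M,\partial M;\mb Z_2)$, i.e.\ that $\partial T_i$ is rectifiable and supported on $\partial M$. Using the slicing identity $\partial(T\llcorner U_{t_i})=(\partial T)\llcorner U_{t_i}-\sigma_{t_i}$, together with $\mathrm{spt}(\partial T)\subset\partial M\subset U_{t_i}$ (so $(\partial T)\llcorner U_{t_i}=\partial T$) and $\pi|_{\partial M}=\mathrm{id}$ (so $\pi_\#\partial T=\partial T$), I compute
\[
\partial T_i \;=\; \partial T-\pi_\#\big(\partial T-\sigma_{t_i}\big) \;=\; \pi_\#\sigma_{t_i},
\]
which is a rectifiable $(k-1)$-current supported on $\partial M$, as required.

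The main obstacle — essentially the whole content of the argument — is the cancellation $\partial T-\pi_\#\partial T=0$; it converts the possibly non-rectifiable boundary $\partial T$ into the manifestly rectifiable $\pi_\#\sigma_{t_i}$, and it relies decisively on both $\mathrm{spt}(\partial T)\subset\partial M$ and $\pi|_{\partial M}=\mathrm{id}$. Everything else (pushforward rectifiability, mass control, and the choice of $t_i$ through the full-measure set on which slicing yields rectifiable slices) is standard. For $\mb Z$ coefficients this recovers \cite[Lemma~3.8]{LZ16}; no sign issue arises, so the same argument works verbatim for $\mb Z_2$.
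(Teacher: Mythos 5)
The core of your argument---slicing by the distance $d$ to $\partial M$, and pushing the thin collar $U_{t_i}$ onto $\partial M$ by $\pi$ so that $\partial T_i = \pi_\#\sigma_{t_i}$---is the right idea, and I believe it matches the approach of the cited reference. Your boundary computation is correct: the slicing identity is valid here even though $\partial T$ may have infinite mass, because $\mathrm{spt}\,\partial T\subset\partial M$ lies strictly inside $\{d<t_i\}$ for $t_i>0$, so $(\partial T)\llcorner U_{t_i}=\partial T$ makes sense as a flat chain and the pushforward cancellation $\pi_\#\partial T=\partial T$ goes through, leaving the rectifiable $\pi_\#\sigma_{t_i}$.

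The gap is in your first sentence. Replacing $T$ by its canonical representative $T_0$ is \emph{not} harmless for the conclusion $\mf M(T_i-T)\to 0$, because that quantity is not invariant under a change of representative. The definition of $Z_k(M,\partial M;\mb Z_2)$ permits $T\llcorner\partial M\ne 0$, and when this holds your construction produces $T_i\to T_0$ in mass, so that $\mf M(T_i-T)\to\mf M(T\llcorner\partial M)>0$. You also cannot simply add $T\llcorner\partial M$ back onto $T_i$: the boundary $\partial(T\llcorner\partial M)$ is in general only a flat chain, possibly of infinite mass (e.g.\ for $k=n$ it corresponds to a measurable subset $E\subset\partial M$ of infinite perimeter), and adding it would destroy membership in $Z_{k,rel}(M,\partial M;\mb Z_2)$. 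A correct fix needs an additional approximation step: approximate $T\llcorner\partial M$ in mass by $R_i\in\mc R_k(\partial M;\mb Z_2)$ with rectifiable boundary (for $k=n$, replace $E$ by a set of finite perimeter close to it in measure), and set $T_i:=\bigl(T_0-\pi_\#(T_0\llcorner U_{t_i})\bigr)+R_i$. This is not merely cosmetic: in Step~IV of the paper's Proposition the lemma is applied to $S_j$ chosen to approach $\nu(\kappa)$ from above, and for $\nu=\mc F$ such $S_j$ need not be canonical representatives, so the mass estimate must hold for non-canonical $T$.
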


Now we prove the main result of this section.

\begin{proposition}
The spaces $\mc Z_n(M,\partial M;\mb Z_2)$ and $\mc Z_{n,rel}(M,\partial M;\mb Z_2)$, endowed with the flat topology, are homeomorphic. In fact, they are isometric with respect to $\mc F$ and $\mf M$.
\end{proposition}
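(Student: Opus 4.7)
The plan is to introduce the natural inclusion-induced map
\[\bar\iota\colon \mc Z_{n,rel}(M,\partial M;\mb Z_2) \to \mc Z_n(M,\partial M;\mb Z_2)\]
and show it is a bijective isometry for both $\mf M$ and $\mc F$; the flat-topology homeomorphism will then follow immediately from the $\mc F$-isometry. The map is well-defined because $\mf I_n(\partial M;\mb Z_2)\subset \mc R_n(\partial M;\mb Z_2)$, so the equivalence relation defining $\mc Z_{n,rel}$ is strictly finer than that defining $\mc Z_n$, and $\bar\iota$ sends classes to classes.

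For surjectivity, given $\tau\in \mc Z_n$ represented by some $T\in Z_n$, Lemma \ref{lemma:appro by integral} yields an approximant $T_1\in Z_{n,rel}$ with $T_1-T\in \mc R_n(\partial M;\mb Z_2)$, so the class of $T_1$ in $\mc Z_{n,rel}$ maps onto $\tau$. For injectivity, suppose $T,S\in Z_{n,rel}$ differ by an element of $\mc R_n(\partial M;\mb Z_2)$; then $\partial(T-S)=\partial T-\partial S\in \mc R_{n-1}(M;\mb Z_2)$ by the very definition of $\mf I_n$, which combined with $\spt(T-S)\subset \partial M$ gives $T-S\in \mf I_n(\partial M;\mb Z_2)$, so the two $\mc Z_{n,rel}$-classes coincide.

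For the $\mf M$- and $\mc F$-isometries, the inequalities $\mf M(\bar\iota(\tau))\leq \mf M(\tau)$ and $\mc F(\bar\iota(\tau))\leq \mc F(\tau)$ are immediate because the infima on the right are taken over a subcollection of representatives. For the reverse, given $\epsilon>0$ I would pick a near-minimizer $S\in \bar\iota(\tau)$ and apply Lemma \ref{lemma:appro by integral} to produce $S_i\in Z_{n,rel}$ with $S_i-S\in \mc R_n(\partial M;\mb Z_2)$ and $\mf M(S_i-S)\to 0$. Fixing any $T_0\in \tau$, the difference $S_i-T_0=(S_i-S)+(S-T_0)$ lies in $\mc R_n(\partial M;\mb Z_2)$, and by the injectivity argument it in fact lies in $\mf I_n(\partial M;\mb Z_2)$, so $[S_i]_{\mc Z_{n,rel}}=\tau$. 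Since $\mc F\leq \mf M$, the mass convergence $\mf M(S_i-S)\to 0$ gives both $\mf M(S_i)\to \mf M(S)$ and $\mc F(S_i)\to \mc F(S)$, yielding the desired reverse inequalities simultaneously for the two norms.

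The only genuinely subtle step in this scheme is the injectivity check, where one must promote membership in $\mc R_n(\partial M;\mb Z_2)$ to membership in $\mf I_n(\partial M;\mb Z_2)$. This distinction is the reason the two quotient spaces are constructed differently in the first place, and it is handled precisely because $Z_{n,rel}$ is defined so as to force rectifiability of boundaries. Everything else is a routine chain of implications leveraging Lemma \ref{lemma:appro by integral}, which is why the authors warn that the arguments are simple but tedious.
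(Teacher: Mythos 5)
Your proposal is correct and takes essentially the same route as the paper's own argument: the paper constructs the inclusion-induced map $\iota$ and an explicit inverse $\eta$ (built from Lemma \ref{lemma:appro by integral}) and then verifies that both are isometries, which is logically the same as showing your single map $\bar\iota$ is a bijective isometry. The key ingredients match exactly — Lemma \ref{lemma:appro by integral} for surjectivity and for replacing near-minimizers by elements of $Z_{n,rel}$, and the promotion from membership in $\mc R_n(\partial M;\mb Z_2)$ to $\mf I_n(\partial M;\mb Z_2)$ (using that $Z_{n,rel}\subset\mf I_n(M;\mb Z_2)$ and $\spt\partial(T-S)\subset\spt(T-S)\subset\partial M$) for injectivity, which is precisely how the paper checks well-definedness of $\eta$.
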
 

\begin{proof}
{\bf Step I: Construction of a map $\iota$ from $\mc Z_{n,rel}(M,\partial M;\mb Z_2)$ to $\mc Z_n(M,\partial M;\mb Z_2)$.}

Let $\tau \in \mc Z_{n,rel}(M,\partial M; \mb Z_2)$ and $T\in\tau$. Then $T$ is an element in $Z_n(M,\partial M;\mb Z_2)$. Moreover, if $T'$ and $T$ are equivalent in $Z_{n,rel}(M,\partial M;\mb Z_2)$, they are also equivalent in $Z_n(M,\partial M;\mb Z_2)$. Thus we get a well-defined map $\iota:\mc Z_{n,rel}(M,\partial M;\mb Z_2) \to \mc Z_n(M,\partial M;\mb Z_2)$.

\vspace{0.5em}
{\bf Step II: Construction of a map $\eta$ from $\mc Z_n(M,\partial M;\mb Z_2)$ to $\mc Z_{n,rel}(M,\partial M;\mb Z_2)$.}
 
Let $\kappa\in \mc Z_n(M,\partial M;\mb Z_2)$, Lemma \ref{lemma:appro by integral} gives an element $S\in \kappa$ such that $\partial S\in\mc R_{n-1}(\partial M;\mb Z_2)$. Hence $S\in Z_{n,rel}(M,\partial M;\mb Z_2)$ by definition in \S \ref{subsec:using integral}. Denote by $\tau_S$ the equivalence class of $S$ in $\mc Z_{n,rel}(M,\partial M;\mb Z_2)$. Define the map \[\eta: \mc Z_{n}(M,\partial M;\mb Z_2) \to \mc Z_{n,rel}(M,\partial M;\mb Z_2)\] by $\eta(\kappa):=\tau_S$. We now prove that $\eta$ is well-defined. Suppose we have another $S' \in \kappa$ satisfying $\partial S' \in\mc R_{n-1}(\partial M;\mb Z_2)$, then we have $S-S' \in \mf I_n(M;\mb Z_2)$. Recall that $S,S' \in \kappa$ implies that $\spt (S-S')\subset \partial M$. Hence $S- S' \in \mf I_n(\partial M;\mb Z_2)$, which means that $S$ and $S'$ are equivalent in $Z_{n,rel}(M,\partial M;\mb Z_2)$. Thus $\eta$ is well-defined.

\vspace{0.5em}
{\bf Step III: Check that $\iota$ and $\eta$ are inverses to each other.}
Indeed, take $\tau\in \mc Z_{n,rel}(M,\partial M;\mb Z_2)$ and $T\in \tau$. Note that $T \in \mf I_n(M; \mb Z_2)$. By definition, we have $\tau=\tau_T=\eta(\iota(\tau))$. Hence, $\eta\circ\iota=\mathrm{id}$. To prove that $\iota\circ\eta=\mathrm{id}$, we take $\kappa\in \mc Z_n(M,\partial M;\mb Z_2)$ and $S\in\kappa$ such that $\partial S\in\mc R_{n-1}(\partial M;\mb Z_2)$. Note that $S\in \tau_S$. Therefore, $\iota(\eta(\kappa))=\iota(\tau_S)=\kappa$.

\vspace{0.5em}
{\bf Step IV: Show that $\iota$ and $\eta$ are isometries with respect to $\mc F$ and $\mf M$.}

It is clear from the definitions that both $\iota$ and $\eta$ are linear maps. Let $\nu=\mc F$ or $\mf M$. Taking $\tau\in\mc Z_{n,rel}(M,\partial M;\mb Z_2)$ and $T_j\in\tau$ so that $\nu(T_j)\rightarrow\nu(\tau)$, we observe that $\nu(T_j)\geq \nu(\iota(\tau))$ since $T_j\in\iota(\tau)$. It follows that $\nu(\iota(\tau))\leq \nu(\tau)$. On the other hand, taking $\kappa \in\mc Z_n(M,\partial M;\mb Z_2)$ and $S_j\in\kappa$ so that $\nu(S_j)\rightarrow\nu(\kappa)$, by Lemma \ref{lemma:appro by integral} there exists a sequence $S_j'\in Z_{n,rel}(M,\partial M;\mb Z_2)$ so that $S_j'-S_j\in\mc R_n(\partial M;\mb Z_2)$ and $\mf M(S_j'-S_j)<1/j$. From the definition of $\eta$, we have $S'_j\in\eta(\kappa)$. Thus, 
\[\nu(\eta(\kappa))\leq \lim_{j\rightarrow\infty}\nu(S_j')=\lim_{j\rightarrow\infty}\nu(S_j)=\nu(\kappa).\]
Together with $\eta\circ\iota=\mathrm{id}$ and $\iota\circ\eta=\mathrm{id}$, we conclude that $\nu(\kappa)=\nu(\eta(\kappa))$ and $\nu(\tau)=\nu(\iota(\tau))$.
\end{proof}

\section{Min-max theory in continuous setting}
\label{sec:min-max theory}

In this section, we describe the min-max theory for manifolds with boundary in continuous setting. Recall that in \cite{LZ16} the Almgren-Pitts min-max theory for compact manifolds with boundary deals with discrete families of elements in $\mc{Z}_k(M,\partial M;G)$. The essential tools  connecting the discrete and continuous settings are the discretization theorem (see \cite[Theorem 4.12]{LZ16}) and the interpolation theorem (see \cite[Theorem 4.14]{LZ16}).

Let $I^m=[0,1]^m$ denote the $m$-dimensional cube. Suppose that $X$ is a subcomplex of dimension $k$ of $I^m$. We adopt the notations for cell complex as in Definition 4.1 of \cite{LZ16}. In particular, the cube complex $X(j)$ denotes the set of all cells of $I(m,j)$ whose support is contained in some cell of $X$, and $X(j)_p$ denotes the set of all $p$-cells in $X(j)$.

Suppose that $\Phi:X\to \mc{Z}_n(M,\partial M;\mf{F};G)$ is a continuous map with respect to the $\mf{F}$-metric. We use $\Pi$ to denote the set of all continuous maps $\Psi:X\to \mc{Z}_n(M,\partial M;\mf{F};G) $ such that $\Phi$ and $\Psi$ are homotopic to each other in the flat topology.
\begin{definition}(Width and min-max sequences) The \emph{width} of $\Pi$ is defined by 
\[\mf{L}(\Pi)=\inf_{\Phi\in \Pi}\sup_{x\in X} \mf{M}(\Phi(x)).\]
A sequence $\{\Phi_i\}_{i\in \mb{N}}\subset \Pi$ is called a \emph{min-max sequence} if $\mf{L}(\Phi_i)=\sup_{x\in X}\mf{M}(\Phi_i(x))$ satisfies
\[\mf{L}(\{\Phi_i\}_{i\in \mb{N}})=\limsup_{i\to \infty}\mf{L}(\Phi_i)=\mf{L}(\Pi).\]
\end{definition}

\begin{definition}(Critical set)
The \emph{image set} of $\{\Phi_i\}_{i\in \mb{N}}$  is defined by 
\[ \begin{split}  \mf{\Lambda}(\{\Phi_i\}_{i\in \mb{N}})=\{ & V\in \mc{V}_n(M): \exists\, \text{  sequences }\, \{i_j\}\to \infty, x_{i_j}\in X\,\\ & \text{ such that }\, \lim_{j\to \infty}\mf{F}(|\Phi_{i_j}(x_{i_j})|, V)=0 \}. \end{split} \]
Let $\{\Phi_i\}_{i\in \mb{N}}$ be a min-max sequence in $\Pi$ such that $L=\mf{L}(\{\Phi_i\}_{i\in \mb{N}})$. The \emph{critical set} of $\{\Phi_i\}_{i\in \mb{N}}$ is defined by 
\[\mf{C}(\{\Phi_i\}_{i\in \mb{N}})=\{V\in \mf{\Lambda}(\{\Phi_i\}_{i\in \mb{N}}): \|V\|(M)=L  \}. \]
\end{definition}

Note that for any min-max sequence $\{\Phi_i\}_{i\in \mb{N}}\subset \Pi$, by the tightening construction (see \cite[Proposition 4.17]{LZ16}), we can find another min-max sequence $\{\Phi_i'\}_{i\in \mb{N}}\subset \Pi$ such that $ \mf{C}(\{\Phi_i'\}_{i\in \mb{N}})\subset \mf{C}(\{\Phi_i\}_{i\in \mb{N}})$ and each $V\in \mf{C}(\{\Phi_i'\}_{i\in \mb{N}})$ is {\em stationary in $M$ with free boundary} (see \cite[Defintion 2.1]{LZ16}).
\begin{lemma} \label{lemma:no mass}
If $\Phi:X\to \mc{Z}_n(M,\partial M;\mf{F};G)$  is a continuous map with respect to the $\mf{F}$-metric, then $\Phi$ has no concentration of mass.
\end{lemma}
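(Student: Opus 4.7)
The plan is to argue by contradiction, using (i) the compactness of the source $X$ and of the target $\Phi(X)$ in the $\mf F$-topology, and (ii) the fact that the canonical representative of any element of $\mc Z_n(M,\partial M;G)$ is rectifiable and therefore its mass measure has no atoms. Recall that ``no concentration of mass'' means
\[ \lim_{r\to 0}\,\sup_{x\in X,\, p\in M}\, \|\Phi(x)\|(B_r(p)) = 0,\]
where $\|\Phi(x)\|$ denotes the Radon measure of the canonical representative.

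Suppose the conclusion fails. Then there exist $\varepsilon>0$, points $x_i\in X$, $p_i\in M$, and radii $r_i\to 0$ with $\|\Phi(x_i)\|(B_{r_i}(p_i))\geq \varepsilon$ for all $i$. Since $X$ is a compact subcomplex of $I^m$ and $M$ is compact, after passing to a subsequence we may assume $x_i\to x_\infty$ in $X$ and $p_i\to p_\infty$ in $M$. Continuity of $\Phi$ in the $\mf F$-metric, together with the identity $\mf F(\tau,\sigma)=\mc F(\tau-\sigma)+\mf F(|T|,|S|)$ from \S\ref{subsec:integer rectifiable currents}, forces the varifolds $|\Phi(x_i)|$ to converge to $|\Phi(x_\infty)|$ in the varifold $\mf F$-metric; in particular the associated Radon measures $\|\Phi(x_i)\|$ converge weakly to $\|\Phi(x_\infty)\|$ as measures on $M$.

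For any fixed $R>0$, eventually $\dist(p_i,p_\infty)+r_i<R$, so $B_{r_i}(p_i)\subset \overline{B_R(p_\infty)}$ and hence $\|\Phi(x_i)\|(\overline{B_R(p_\infty)})\geq\varepsilon$ for all sufficiently large $i$. Applying the upper semicontinuity of weak convergence on closed sets yields
\[ \|\Phi(x_\infty)\|\bigl(\overline{B_R(p_\infty)}\bigr) \geq \limsup_{i\to\infty}\|\Phi(x_i)\|\bigl(\overline{B_R(p_\infty)}\bigr) \geq \varepsilon. \]
Letting $R\to 0$ and using continuity from above of the finite Radon measure $\|\Phi(x_\infty)\|$, we obtain $\|\Phi(x_\infty)\|(\{p_\infty\})\geq \varepsilon$. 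But the canonical representative of $\Phi(x_\infty)\in\mc Z_n(M,\partial M;G)$ is an $n$-rectifiable current in an $(n+1)$-dimensional manifold, so its mass measure is absolutely continuous with respect to $\mathcal H^n$ and in particular atom-free, contradicting $\|\Phi(x_\infty)\|(\{p_\infty\})\geq\varepsilon$.

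The main subtlety, and the step to be handled most carefully, is getting the semicontinuity in the right direction: weak convergence of Radon measures only gives $\liminf\mu_i(U)\geq\mu(U)$ on open sets and $\limsup\mu_i(F)\leq\mu(F)$ on closed sets. This is why the argument passes through the closed ball $\overline{B_R(p_\infty)}$ (where the direction we need is available) rather than the open ball $B_R(p_\infty)$, and why the extraction of the limit point $p_\infty$ must be done before applying semicontinuity. Everything else is formal once the $\mf F$-continuity of $\Phi$ is unpacked into simultaneous flat and varifold continuity via the definition of $\mf F$ in \S\ref{subsec:integer rectifiable currents}.
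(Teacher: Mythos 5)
Your proof is correct, and it supplies in full the argument that the paper disposes of in a single line by citing \cite[p.~472]{MN16}: extract convergent subsequences $x_i\to x_\infty$, $p_i\to p_\infty$ using compactness of $X$ and $M$, convert $\mf F$-continuity into weak varifold (hence weak-$*$ measure) convergence, apply the portmanteau upper bound on closed balls, and derive an atom in $\|\Phi(x_\infty)\|$, contradicting rectifiability. This is precisely the mechanism the cited reference relies on, so the route matches the paper's; the only remark worth making is that you correctly identified the one non-formal step, namely that pointwise vanishing $\lim_{r\to 0}\sup_p\|\Phi(x)\|(B_r(p))=0$ for each fixed $x$ does not immediately yield uniformity in $x$, and that the sequential-compactness-plus-semicontinuity argument is exactly what bridges that gap.
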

\begin{proof}
This lemma follows from the definition directly (c.f. \cite[p.472]{MN16}).
\end{proof}

The next theorem tells us that we can construct a continuous map in the $\mf M$-norm out of a discrete map with small fineness.

\begin{theorem}(Interpolation Theorem; \cite[Theorem 4.14]{LZ16}, \cite[Theorem 2.11]{LMN16})
\label{thm:interpolation}
Let $M^{n+1}$ be a compact Riemannian manifold with boundary and $m\in \mb{N}$. Then there exists $C_0>0$ and $\delta_0>0$ depending only on $M$ and $m$ such that if $X$ is a cubical subcomplex of $I(m,l)$ and 
\[\phi: X_0\to \mc{Z}_n(M,\partial M; G)\]
has $\mf{f}_{\mf{M}}(\phi)<\delta_0$ \cite[Definition 4.2]{LZ16}, then there exists a map 
\[\Phi: X\to \mc{Z}_n(M,\partial M; G)\] which is continuous in the $\mf{M}$-topology and satisfying 
\begin{enumerate}[label=(\roman*)]
\item $\Phi(x)=\phi(x)$ for all $x\in X_0$;
\item for any $p$-cell $\alpha$ in $X_p$, $\Phi|_{\alpha}$ depends only on the restriction of $\phi$ on the vertices of $\alpha$ and 
\[\max\{\mf{M}(\Phi(x)-\Phi(y)) : x,y\in \alpha\}\leq C_0\mf{f}_{\mf{M}}(\phi).\]
\end{enumerate}
\end{theorem}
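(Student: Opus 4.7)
The plan is to follow the Almgren--Pitts interpolation scheme, adapted to the free boundary setting as in \cite{LZ16}, and proceed by induction on the dimension of cells. For the base case, set $\Phi(x) := \phi(x)$ on $X_0$. For the inductive step, assume $\Phi$ has been defined and $\mf{M}$-continuous on the $(p-1)$-skeleton $X_{p-1}$, with the quantitative mass control $\mf{M}(\Phi(x)-\Phi(y)) \le C_0 \mf{f}_{\mf{M}}(\phi)$ on each $(p-1)$-cell. Given a $p$-cell $\alpha$ with $2^p$ vertices, I would extend $\Phi|_{\partial \alpha}$ to the interior of $\alpha$ by a coning/filling argument that depends only on the values of $\phi$ at the vertices of $\alpha$, in order to secure property (ii).

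The key technical input is an Almgren-type isoperimetric lemma for equivalence classes in $\mc Z_n(M,\partial M;G)$: there exist $\nu>0$ and $C>0$, depending only on $M$, such that whenever $\tau_1,\tau_2 \in \mc Z_n(M,\partial M;G)$ have canonical representatives $T_1,T_2$ with $\mc F(T_1-T_2)<\nu$, one can find $Q \in \mc R_{n+1}(M;G)$ with $\partial Q - (T_1-T_2) \in \mc R_n(\partial M;G)$ and $\mf M(Q) \le C\,\mc F(T_1-T_2)$. This free-boundary version of Almgren's filling lemma follows by combining the classical interior isoperimetric inequality with the approximation Lemma \ref{lemma:appro by integral}, pushing the boundary part into $\partial M$ (where it vanishes after passing to equivalence classes). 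On a $1$-cell, slicing $Q$ by level sets of a smooth function and following Pitts' construction yields an $\mf M$-continuous path between $\tau_1$ and $\tau_2$ whose mass oscillation is bounded by $C'\mf f_{\mf M}(\phi)$.

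For $p\ge 2$, I would define $\Phi|_\alpha$ by the standard Almgren ``binary subdivision plus cone'' construction: repeatedly subdivide $\alpha$ into smaller subcubes, assign to each new vertex a value interpolating between nearby vertices (via the $1$-dimensional procedure above applied to finer and finer scales), and take the limit. The hypothesis $\mf f_{\mf M}(\phi)<\delta_0$ ensures the isoperimetric lemma is applicable at every stage, while a geometric-series estimate gives the uniform bound
\[
\max\{\mf M(\Phi(x)-\Phi(y)): x,y\in\alpha\} \le C_0\,\mf f_{\mf M}(\phi).
\]
The same bound, together with a Lipschitz estimate for the interpolation on each scale, yields $\mf M$-continuity on $\alpha$.

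The principal obstacle, and what distinguishes this from the closed-manifold case of Marques--Neves, is the boundary version of the isoperimetric lemma: one must produce fillings that respect the equivalence relation modulo $\mc R_n(\partial M;G)$ without losing mass control. This is handled by working with the canonical representatives (which satisfy $T\llcorner \partial M=0$) and invoking Lemma \ref{lemma:appro by integral} together with Gr\"{u}ter-type boundary regularity for mass-minimizers to push the ``bad'' boundary portion of any filling into $\partial M$ at controlled cost. Once this lemma is in place, the inductive skeleton-by-skeleton extension and mass estimate go through as in \cite[Theorem 4.14]{LZ16}, \cite[Theorem 2.11]{LMN16}.
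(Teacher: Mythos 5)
The paper does not prove this theorem: it is stated purely as a citation of \cite[Theorem 4.14]{LZ16} and \cite[Theorem 2.11]{LMN16}, with no proof supplied in the body of the paper, so there is no ``paper's own proof'' to compare against. Your overall sketch does capture the structure of the cited argument: inductive extension across skeleta, $1$-cells handled by an isoperimetric choice followed by a slicing/coning construction, higher cells by repeated binary subdivision with a geometric-series mass estimate, all arranged so that $\Phi|_\alpha$ depends only on the restriction of $\phi$ to the vertices of $\alpha$.

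There is, however, a concrete gap in how you propose to establish the key ingredient. The free-boundary isoperimetric lemma for relative cycles in $\mc Z_n(M,\partial M;G)$ (the analogue of \cite[Lemma 3.10]{LZ16}) is established by a purely constructive argument: one uses the ambient Federer--Fleming isoperimetric inequality in $\mb R^L$ together with controlled Lipschitz retraction/deformation maps (with quantitative estimates in a collar of $\partial M$) to produce a filling $Q$ supported in $M$ with $\spt(\partial Q - (T_1-T_2))\subset\partial M$ and $\mf M(Q)\leq C\,\mc F(\tau_1-\tau_2)$. Your proposal to invoke ``Gr\"{u}ter-type boundary regularity for mass-minimizers to push the bad boundary portion of any filling into $\partial M$ at controlled cost'' would not close the argument: regularity theorems assert that an area-minimizer is smooth, not that an efficient filling exists with quantitative mass bounds and support confined to $M$, which is what the isoperimetric lemma must deliver; these are different kinds of statements and the one you need is obtained constructively, not from regularity. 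Similarly, Lemma \ref{lemma:appro by integral} provides $\mf M$-approximation by integral currents with good boundaries and is not, on its own, a substitute for the retraction estimate controlling the filling. Once the Gr\"{u}ter/regularity step is replaced by the constructive isoperimetric lemma, the remainder of your inductive skeleton-by-skeleton extension and the Lipschitz/mass estimates match the strategy of the cited references.
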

We remark that in the above theorem, the map $\Phi$ is called the \emph{Almgren extension} of $\phi$.
\vspace{2mm}

Next, we will formulate the min-max theory for manifolds with boundary in continuous setting (cf. \cite[Theorem 3.8]{MN16}).

\begin{theorem}(Min-max Theorem)
\label{thm:minmax}
Let $3 \leq n+1 \leq 7$. 
Suppose that $\mf{L}(\Pi)>0$ and $\{\Phi_i\}_{i\in \mb{N}}\subset \Pi$ is a min-max sequence. Then there exists a varifold $V\in \mf{C}(\{\Phi_i\}_{i\in \mb{N}})$ such that 
\begin{itemize}
\item[(i)] $\|V\|(M)=\mf{L}(\Pi)$;
\item[(ii)] $V$ is stationary in $M$ with free boundary;
\item [(iii)] $V$ is supported on a smooth, compact, almost properly embedded free boundary minimal hypersurface. 
\end{itemize}
\end{theorem}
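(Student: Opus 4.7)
The plan is to reduce Theorem \ref{thm:minmax} to the discrete Almgren--Pitts min-max theorem proved in \cite{LZ16}, following the strategy Marques--Neves used in \cite[\S 3]{MN16} for closed manifolds. First I would apply the tightening construction of \cite[Proposition 4.17]{LZ16}---a continuous deformation by ambient diffeomorphisms of $M$ (tangent to $\partial M$) that decreases mass along the first variation---to replace $\{\Phi_i\}$ by a new min-max sequence $\{\Psi_i\}\subset\Pi$ satisfying $\mf C(\{\Psi_i\})\subset \mf C(\{\Phi_i\})$ and such that every $V\in \mf C(\{\Psi_i\})$ is stationary in $M$ with free boundary; this takes care of conclusion (ii). Crucially, Lemma \ref{lemma:no mass} ensures no concentration of mass along the sequence, which is needed for the subsequent discretization.

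Next, since each $\Psi_i : X\to \mc Z_n(M,\partial M;\mf F;G)$ is $\mf F$-continuous with no mass concentration, I would adapt the Marques--Neves discretization procedure \cite[Theorem 3.9]{MN16} to the free boundary setting---using the equivalence of formulations established in Section \ref{sec:equivalence} to pass freely between integer rectifiable and integral relative cycles---to produce discrete maps $\phi_{i,j}: X(k_{i,j})_0\to \mc Z_n(M,\partial M;G)$ with fineness $\mf f_{\mf M}(\phi_{i,j})\to 0$ as $j\to\infty$, whose Almgren extensions $\wti\Phi_{i,j}$ (Theorem \ref{thm:interpolation}) are homotopic to $\Psi_i$ in the flat topology, are $\mf F$-close to $\Psi_i$ uniformly on $X$, and satisfy
\[
\sup_{x\in X(k_{i,j})_0} \mf M(\phi_{i,j}(x)) \le \sup_{x\in X}\mf M(\Psi_i(x)) + o_j(1).
\]
A standard diagonal argument then produces a single discrete sequence $\phi_i = \phi_{i,j(i)}$ with fineness tending to zero, $\limsup_i \sup_x \mf M(\phi_i(x)) = \mf L(\Pi)$, and whose discrete image set is contained in $\mf{\Lambda}(\{\Psi_i\})$, so that its discrete critical set lies inside $\mf C(\{\Psi_i\})$. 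Applying the main existence theorem of \cite{LZ16} to $\{\phi_i\}$---combined with the interior regularity of \cite{Pi,SS} and the boundary regularity based on Gr\"uter's theorem \cite{Gr87}, both available in the dimension range $3\le n+1\le 7$---yields a varifold $V\in \mf C(\{\phi_i\})$ satisfying (i), (ii), and (iii). Since $V\in \mf C(\{\phi_i\})\subset \mf C(\{\Psi_i\})\subset \mf C(\{\Phi_i\})$, all three claims follow.

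The main obstacle is the discretization step: producing, from an $\mf F$-continuous map $\Psi_i$ without mass concentration, a sequence of discrete maps whose Almgren extensions remain both homotopic and $\mf F$-close to $\Psi_i$ while keeping the sup of the mass under control. A mass-continuous version is already present in \cite{LZ16}, and the $\mf F$-continuous analogue for closed manifolds is \cite[Theorem 3.9]{MN16}; the free boundary case however requires some care, especially in tracking relative cycles, controlling the canonical representatives near $\partial M$, and using the isometry of Section \ref{sec:equivalence} to move between the integer rectifiable and integral current formulations. Once this is in place, the boundary regularity of the resulting stationary integral varifold is invoked as a black box from \cite{LZ16}.
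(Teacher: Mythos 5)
Your proposal follows essentially the same pipeline as the paper: tighten via \cite[Proposition 4.17]{LZ16} to handle (ii), discretize each $\Phi_i$ using no-mass-concentration, diagonalize, use the Interpolation Theorem to get Almgren extensions in the right homotopy class, and then invoke the discrete Almgren--Pitts existence and regularity machinery. Two remarks on where your account departs from, or falls short of, what is actually needed.

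First, you frame the discretization as an "adaptation" of \cite[Theorem~3.9]{MN16} that "requires some care," and suggest the equivalence of formulations from Section~\ref{sec:equivalence} is needed to make it work. In fact the Discretization Theorem already exists in the free boundary setting as \cite[Theorem~4.12]{LZ16}, stated for maps continuous in the flat topology with no concentration of mass; the paper applies it directly. The role of $\mf F$-continuity is only to guarantee that $x \mapsto \mf M(\Phi_i(x))$ is continuous, which combined with property (4) of the discretization and \cite[Lemma~3.13]{LZ16} upgrades the flat-metric closeness of $\phi_i^j$ to $\Phi_i$ to $\mf F$-metric closeness, so that the diagonal sequence has the correct critical set. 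The equivalence of formulations from Section~\ref{sec:equivalence} is not an ingredient in this theorem's proof.

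Second, and more seriously, there is a genuine gap in the regularity step. You write that boundary regularity "is invoked as a black box from \cite{LZ16}," but \cite[Theorem~5.2]{LZ16} only covers $G=\mb Z$. The statement here allows $G = \mb Z_2$, and for that case the paper must prove new results: Theorem~\ref{thm:regularity of Z2 almost minimizing} (regularity of $\mb Z_2$-almost minimizing varifolds with free boundary) and, underlying it, Theorem~\ref{thm:regualrity of Z2 minimizer} (regularity of $\mb Z_2$-area-minimizers at the free boundary). The latter is not a black box: one reflects across $S$, applies the slicing lemma to get an integral mod-2 cycle, uses Morgan's result \cite[Lemma~4.2]{Mor84} to pick a representative modulo~2, and then argues that the restricted representative is a $\mb Z$-minimizer so that Gr\"uter's \cite[Theorem~4.7]{Gr87} applies. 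Without this, the $\mb Z_2$ case of (iii) is unproved, and it is precisely the $\mb Z_2$ case that is needed later for the Weyl law and density applications.
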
 
\begin{proof}

The proof is parallel to the one in \cite[Theorem 3.8]{MN16}. Here, we adapt the arguments in the proof of \cite[Theorem 3.8]{MN16} and make necessary modifications. 
By the tightening construction, we may assume that every element of $\mf{C}(\{\Phi_i\}_i)$ is stationary with free boundary.

\vspace{2mm}
\textbf{Step one: The discretization process.} 
For any $\Phi_i:X\to \mc{Z}_n(M,\partial M;\mf{F};G)$, by Lemma \ref{lemma:no mass}, we know that $\Phi_i$ has no concentration of mass. By the Discretization Theorem, \cite[Theorem 4.12]{LZ16} (see also \cite[Theorem 2.10]{LMN16}),  there is a sequence of maps
\[\phi_i^j: X(k_j^i)_0 \to \mc{Z}_n(M,\partial M; G)\]
with $k_j^i<k_{j+1}^i$ and a sequence of positive constants $\delta_j^i\to \infty$ as $j\to \infty$ such that 

\begin{enumerate}
\item $S_i=\{\phi_i^j\}_{j\in \mb{N}}$ is an $(X,\mf{M})$-homotopy sequence of mappings into $\mc{Z}_n(M,\partial M; G)$ with fineness $\mf{f}_{\mf{M}}(\phi_i^j)<\delta_j^i$;
\item \[\sup\{\mc{F}(\phi_i^j(x)-\Phi_i(x))\,:\, x\in X(k_j^i)_0\}\leq \delta_j^i;\]
\item 
\[   \sup\{\mf{M}(\phi_i^j(x)) : x\in X(k_j^i)_0\}\leq \sup\{\mf{M}(\Phi_i(x)) : x\in X\}+\delta_j^i;   \] 
\item there exists a sequence $l_{j}^i\to \infty$ as $j\to \infty$ such that for any $y\in X(k_j^i)_0$, 
\[\mf{M}(\phi_i^j(y))\leq \sup\{\mf{M}(\Phi_i(x)):\alpha \in X(l_j^i),x,y\in \alpha\}+\delta_j^i.\]
\end{enumerate}

Since $\Phi_i$ is continuous with respect to the $\mf{F}$-metric, we have that $x\in X\mapsto \mf{M}(\Phi_i(x))$ is continuous. Then property (4) implies that there exists $\eta_j^i\to 0$ as $j\to \infty$ such that for any $y\in X(k_j^i)_0$,
\begin{equation}\label{equ:minmax:mass}
\mf{M}(\phi_i^j(y))\leq \mf{M}(\Phi_i(y))+\eta_j^i .
\end{equation} 
By \cite[Lemma 3.13]{LZ16}, for any $\tau, \tau_k\in \mc{Z}_n(M,\partial M; G)$, $k\in \mb{N}$, then $\mf{F}(\tau,\tau_k)\to 0$ if and only if $\mc{F}(\tau,\tau_k)\to 0$ and $\mf{M}(\tau_k)\to \mf{M}(\tau)$ as $k\to \infty$. This together with property (2), (\ref{equ:minmax:mass}) and a standard compactness argument gives that 
\begin{equation}\label{equ:minmax:F}
\sup\{\mf{F}(\phi_i^j(x),\Phi_i(x)) : x\in X(k_j^i)_0\}\to 0\quad \text{ as }\, j\to \infty.
\end{equation}

\vspace{2mm}

\textbf{Step two: The diagonal argument.} Now combining properties (1)-(4) and (\ref{equ:minmax:F}), for each $i$, we can choose $j(i)\to \infty$ as $i\to \infty$ such that the diagonal sequence 
\[\vphi_i=\phi_i^{j(i)}: X(k_{j(i)}^i)_0\to \mc{Z}_n(M,\partial M; G)\]
satisfies 

\begin{itemize}
\item $\sup\{\mf{F}(\vphi_i(x),\Phi_i(x)): x\in X(k_{j(i)}^i)_0\}\leq a_i \quad \text{with }\, \lim_{i\to \infty}a_i=0$;
\item $\sup\{\mf{F}(\Phi_i(x),\Phi_i(y)) : x,y\in \alpha, \alpha\in X(k_{j(i)}^i)\}\leq a_i;$
\item the fineness $\mf{f}_{\mf{M}}(\vphi_i)$ tends to zero as $i\to \infty$.
\end{itemize}
Moreover, using the interpolation result (Theorem \ref{thm:interpolation}), we can approximate $\vphi_i$ by a continuous map $\Psi_i^{j(i)}: X\to \mc{Z}_n(M,\partial M; \mf{M};G)$, which is called the Almgren extension of $\vphi_i$. By property (2) and \cite[Proposition 2.12]{LMN16}, we can also require that $\Psi_i^{j(i)}$ is homotopic to $\Phi_i$ in the flat topology. 

Now we consider the sequence $S=\{\vphi_i\}_{i\in \mb{N}}$ and we have 
\[\mf{L}(S)=\limsup_{i\to \infty} \max\{\mf{M}(\vphi_i(x)): x\in X(k_{j(i)}^i)_0\}.\]
By (\ref{equ:minmax:mass}), we have $\mf{L}(S)\leq \mf{L}(\{\Phi_i\}_i)=\mf{L}(\Pi)$. Since the Almgren extension $\Psi_i^{j(i)}$ is homotopic to $\Phi_i$, we have $\Psi_i^{j(i)}\in \Pi$. This implies that $\mf{L}(\{\Phi_i\}_i)=\mf{L}(\Pi)\leq \mf{L}(S)$. Hence,  \[\mf{L}(S)=\mf{L}(\{\Phi_i\}_i)=\mf{L}(\Pi).\]
We can also prove that $\mf{C}(S)=\mf{C}(\{\Phi_i\}_i)$. By the choice of $\mf{C}(\{\Phi_i\}_i)$, we know that every element of $\mf{C}(S)$ is stationary with free boundary. 

Next, we claim that there exists an element $V\in \mf{C}(S)$ such that $V$ is almost minimizing in small annuli with free boundary (see \cite[Definition 4.19]{LZ16}). Otherwise, we can deform $S$ homotopically to another sequence $\wti{S}$ as in the proof of \cite[Theorem 4.21]{LZ16} such that $\mf{L}(\wti{S})<\mf{L}(S)$, which is a contradiction. 

Finally, the conclusion follows directly from the regularity results in \cite[Theorem 5.2]{LZ16} (when $G=\mb Z$) and Theorem \ref{thm:regularity of Z2 almost minimizing} (when $G=\mb Z_2$). This completes the proof.  
\end{proof}

Based on the work of Pitts \cite{Pi}, Schoen-Simon \cite{SS} and Gr\"uter \cite{Gr87}, Li-Zhou \cite{LZ16} proved the regularity for stationary varifolds  which is $\mb Z$-almost minimizing in small annuli with free boundary. As we explain below, the arguments also extend to $\mb Z_2$-coefficients.

\begin{theorem}[Regularity of $\mb Z_2$-almost minimizing varifolds]
\label{thm:regularity of Z2 almost minimizing}
Let $2\leq n\leq 6$. Suppose $V\in\mc V_n(M)$ is a varifold which is
\begin{itemize}
\item stationary in $M$ with free boundary and
\item $\mathbb Z_2$-almost minimizing in small annuli with free boundary,
\end{itemize}
then there exists $N\in\mb N$ and $n_i\in\mb N, i = 1,..., N $, such that
\[V=\sum_{i=1}^Nn_i |\Sigma_i|,\]
where each $(\Sigma_i,\partial\Sigma_i)\subset (M, \partial M )$ is a smooth, compact, connected, almost properly embedded free boundary minimal hypersurface.
\end{theorem}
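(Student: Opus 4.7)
\medskip

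My plan is to follow the proof of \cite[Theorem 5.2]{LZ16} step by step and verify that every ingredient admits a $\mb Z_2$ analogue; the framework is entirely parallel, so the task is primarily one of bookkeeping together with a few genuine modifications at spots where the coefficient group enters essentially.

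First, I would reduce regularity to a \emph{local} question and split into two cases depending on whether a point $p\in\spt\|V\|$ lies in the interior $M\setminus\partial M$ or on the boundary $\partial M$. At an interior point, the free-boundary condition is vacuous and one is in the classical setting of Pitts: a stationary integral varifold that is almost minimizing in small annuli with $\mb Z_2$ coefficients. The replacement construction, curvature estimates, and tangent cone analysis all work identically for $\mb Z_2$ (indeed, the mod-$2$ version is the one used throughout \cite{MN16} and \cite{LMN16}), and combined with the Schoen-Simon regularity theory in the stable range $n\leq 6$ one obtains smoothness of $\spt\|V\|$ near $p$, together with integer multiplicities via the constancy theorem.

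At a boundary point $p\in\partial M\cap\spt\|V\|$, I would reproduce the replacement argument of \cite[\S 5]{LZ16} inside small half-annuli adapted to $\partial M$, now carried out in the space $\mc Z_{n,rel}(M,\partial M;\mb Z_2)$ or equivalently in $\mc Z_n(M,\partial M;\mb Z_2)$ via the equivalence proved earlier. The key analytic ingredients are Gr\"uter's free-boundary regularity theorem for area-minimizing boundaries, which works for $\mb Z_2$ coefficients (since the mod-$2$ reduction of an area-minimizing integral current with free boundary is still area-minimizing among mod-$2$ competitors), and the rigidity/uniqueness of tangent cones at boundary points, for which one uses the mod-$2$ monotonicity and the even-reflection trick that turns a free-boundary minimizer near $\partial M$ into an interior minimizer in the doubled manifold $\wti M\cup_{\partial M}\wti M$. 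Once the two replacements through a pair of half-annuli agree on their common overlap, one gets a smooth minimal sheet passing through $p$ and meeting $\partial M$ orthogonally.

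The main obstacle I anticipate is the boundary step, specifically verifying that the replacement $V^*$ one produces is again $\mb Z_2$-almost minimizing in small annuli (so that replacements can be iterated) and that two replacements glue into a smooth hypersurface. Once these are in place, the strong maximum principle and constancy theorem imply that $\spt\|V\|$ is, near every point, a smooth embedded free boundary minimal hypersurface with locally constant integer multiplicity; a standard covering and connectedness argument then assembles $V$ into the finite sum $\sum_{i=1}^N n_i|\Si_i|$ of the stated form, with each $\Si_i$ smooth, compact, connected and almost properly embedded. The almost-properness (as opposed to properness) is forced precisely because the replacement argument does not exclude a touching set $(\Si_i\cap\partial M)\setminus\partial\Si_i$, and this is the correct conclusion consistent with the notion of index used throughout the paper.
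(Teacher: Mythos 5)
Your overall strategy -- split interior vs. boundary points, invoke the $\mathbb{Z}_2$ interior theory of Marques--Neves/Schoen--Simon, and then run the Li--Zhou replacement construction at the boundary after reflecting across $\partial M$ -- is the same architecture the paper uses, and it correctly identifies where the coefficient group could matter. However, there is a genuine gap at the single step the paper actually has to work for: you assert that ``Gr\"uter's free-boundary regularity theorem \ldots works for $\mathbb{Z}_2$ coefficients (since the mod-$2$ reduction of an area-minimizing integral current with free boundary is still area-minimizing among mod-$2$ competitors).'' That parenthetical is the implication in the \emph{wrong} direction. The problem in the replacement step is that you are handed a current that minimizes among $\mathbb{Z}_2$ competitors, and you need to conclude regularity at the free boundary; Gr\"uter's theorem is stated and proved for $\mathbb{Z}$-minimizers, and there is no a priori reason a $\mathbb{Z}_2$-minimizer should be the mod-$2$ reduction of a $\mathbb{Z}$-minimizer. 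This is exactly what the paper isolates and proves as Theorem~\ref{thm:regualrity of Z2 minimizer}: one first reflects $T$ across the barrier $S$ to get an interior relative cycle $\widetilde T$ with finite $\mathbf{M}^2$ mass, slices to land in a good ball, then invokes Morgan's Lemma~4.2 from \cite{Mor84} to produce a multiplicity-one \emph{integral} representative modulo two $T_1$ whose boundary support agrees with that of $T'$, and finally verifies via the chain $\mathbf{M}^0 \geq \mathbf{M}^2 \geq \mathbf{M}^2(T\llcorner B_{\alpha r}) = \mathbf{M}^0(R)$ that the restriction $R = T_1\llcorner B^+_{\alpha r}$ is a genuine $\mathbb{Z}$-area-minimizer with free boundary, to which Gr\"uter's Theorem~4.7 then applies. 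The representative-mod-$2$ lemma is the content-bearing ingredient you omit; without it, the reduction of the $\mathbb{Z}_2$ minimizing problem to a $\mathbb{Z}$ one near $\partial M$ does not close.

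Two smaller comments. First, your ``even-reflection into the doubled manifold $\widetilde M\cup_{\partial M}\widetilde M$'' is a global version of Gr\"uter's local reflection across $S$; the paper works locally, which is what is needed since $S$ need not be totally geodesic globally, but either formulation can be made to work with care. Second, once the $\mathbb{Z}_2$ free-boundary minimizer regularity is available, the paper does not re-run the whole of \cite[\S 5]{LZ16}: it simply observes that \cite[Lemma 5.5]{LZ16} (regularity of replacements) goes through verbatim with Morgan's Theorem~2.4 from \cite{Mor86} replaced by Theorem~\ref{thm:regualrity of Z2 minimizer}, and the remaining gluing, tangent-cone, and maximum-principle steps are coefficient-agnostic. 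So your instinct that ``the task is primarily bookkeeping with a few genuine modifications'' is right in spirit, but you need to actually supply the one genuine modification rather than claim it follows from a (reversed) triviality.
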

\begin{proof}
The interior regularity of $V$ follows from \cite[Theorem 2.11]{MN17}. The regularity of $V$ on the boundary follows from a similar procedure as the proof in \cite{LZ16}. The only difference is to show the regularity of \emph{replacements} (see \cite[Proposition 5.3]{LZ16} for definition) for a $\mb Z_2$-almost minimizing varifold with free boundary. 
The last statement follows from \cite[Lemma 5.5]{LZ16} (where regularity of replacements for $\mb Z$-almost minimizing varifold with free boundary was proved) by replacing \cite[Regularity Theorem 2.4]{Mor86} with Theorem \ref{thm:regualrity of Z2 minimizer}.
\end{proof}

Finally, we establish the regularity for locally area-minimizers with respect to $\mb Z_2$ coefficients. A similar result was obtained earlier by Gr\"{u}ter \cite{Gr87} for $\mb Z$ coefficients.

\begin{theorem}[Regularity of $\mb Z_2$-minimizers]
\label{thm:regualrity of Z2 minimizer}
Let $S\subset\mb R^{n+1}$ be an $n$-dimensional submanifold of class $C^2$ and let $U$ be an open set such that $\partial S\cap U=\emptyset$. Suppose $T\in\mc R_n(\mb R^{n+1};\mb Z_2)$ with $\sptt T\subset \overline U$ and $\sptt\partial T\cap U\subset S$ such that 
\begin{equation}\label{eq:thm:area minimizer }
\mf M^2_W(T)\leq \mf M^2_W(T+X)
\end{equation}
for all open $W\subset\subset U$ and $X\in\mc R_n(U;\mb Z_2)$ with $\sptt X\subset W$ and $\sptt \partial X \cap U\subset S$. Then we have
\begin{itemize}
\item $\mathrm{sing}(T)=\emptyset$ if $2\leq n\leq 6$;
\item $\mathrm{sing}(T)$ is discrete if $n=7$;
\item $\mathrm{dim}(\mathrm{sing}(T))\leq n-7$ if $n>7$.
\end{itemize}
In case $x\in S\cap \mathrm{reg}(T)$ we know that $S$ and $T$ intersect orthogonally in a neighborhood of $x$.
\end{theorem}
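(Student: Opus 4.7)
The plan is to adapt Gr\"{u}ter's reflection argument in \cite{Gr87} from the $\mb Z$ coefficient setting to the $\mb Z_2$ one. At an interior point $x\in \sptt T\setminus S$ the hypothesis localizes to $T$ being an absolute mass-minimizer among $\mb Z_2$-cycles in a small ball around $x$, and the classical interior regularity theory for $\mb Z_2$-area minimizing currents (Federer-Fleming, De Giorgi, Simons) yields the stated bounds on $\mathrm{sing}(T)$ away from $S$. The main task is therefore to obtain the analogous bounds at points $x\in S\cap \sptt T$ and to verify orthogonality there.

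Fix such an $x$ and choose a neighborhood $V\subset\subset U$ of $x$ on which $S$ is a $C^2$ graph and the reflection $\sigma$ across $S$ is defined on $V\cup\sigma(V)$. Following \cite{Gr87}, introduce a $C^{1,1}$ Riemannian metric $\tilde g$ on $V\cup\sigma(V)$ that agrees with the Euclidean metric on $S$ and for which $\sigma$ is an isometry; since $\tilde g$ agrees with the Euclidean metric to first order along $S$, interior regularity with respect to either metric is equivalent. Consider the symmetrized current
\[ \tilde T := (T\llcorner V) + \sigma_\#(T\llcorner V) \in \mc R_n(V\cup\sigma(V);\mb Z_2). \]
Because $\sigma$ fixes $S$ pointwise and $\sptt\partial T\cap V\subset S$, the contributions of $\partial T\llcorner V$ and $\sigma_\#(\partial T\llcorner V)$ coincide mod $2$ and cancel, so $\tilde T$ has no boundary in a neighborhood of $x$.

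The next step is to show that $\tilde T$ is locally mass-minimizing in $(V\cup\sigma(V),\tilde g)$. Given a comparison $\tilde X\in\mc R_n(V\cup\sigma(V);\mb Z_2)$ supported compactly with $\partial\tilde X=0$, one slices $\tilde T+\tilde X$ along $S$ and splits it into a piece $Y$ on the original side and a piece on the reflected side; the minimizing hypothesis \eqref{eq:thm:area minimizer } applied to $Y$ (whose extra boundary lies on $S$ by the slicing) gives $\mf M^2_{\tilde g}(T\llcorner V)\leq \mf M^2_{\tilde g}(Y)$, and the $\sigma$-invariance of $\tilde g$ yields the analogous bound on the reflected side. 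Summing proves the $\tilde g$-minimality of $\tilde T$. Applying the boundaryless interior regularity theorem for $\mb Z_2$-minimizers in a $C^{1,1}$ metric to $\tilde T$ transfers the stated estimates on $\mathrm{sing}(\tilde T)$ near $x$ down to $\mathrm{sing}(T)$ by restriction. Orthogonality at a point $x\in S\cap\mathrm{reg}(T)$ then follows because $\sigma_\#\tilde T=\tilde T$ forces the tangent hyperplane of the (now smooth) $\tilde T$ at $x$ to be invariant under the Euclidean reflection $d\sigma$ across $T_xS$, and the only $d\sigma$-invariant hyperplanes are $T_xS$ itself and those containing the normal to $S$; the first is excluded (see the next paragraph).

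The main obstacle is the cancellation behavior of $\mb Z_2$ coefficients under reflection. In the $\mb Z$ case of \cite{Gr87} the symmetrized current carries a natural doubled orientation; in the $\mb Z_2$ case one must check that $T$ and $\sigma_\# T$ do not share a common trace on $S$ that silently disappears mod $2$. This is handled by showing that the local minimizing hypothesis forces the $n$-density of $T$ on the $n$-dimensional $C^2$ set $S$ to vanish at $\|T\|$-a.e.\ point: otherwise one could reduce mass by excising the piece $T\llcorner S$ (which lies on a smooth hypersurface and has no effect on the boundary constraint because $\partial(T\llcorner S)$ is supported in $S$). Consequently the symmetrization is genuine, $\mf M^2(\tilde T)$ equals the sum of the masses of its two halves, and $T_x\tilde\Sigma=T_xS$ is ruled out at a regular point. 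With these inputs in place, the remainder of the argument runs as a direct $\mb Z_2$-analogue of Gr\"{u}ter's proof.
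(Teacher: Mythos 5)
Your reflection strategy is the right starting point and matches the paper's opening moves (both symmetrize across $S$ using Gr\"uter's reflection $\sigma$), but after that the two proofs diverge in a substantial way. You propose to prove that the symmetrized $\mb Z_2$ current $\tilde T = T + \sigma_\# T$ is itself a locally mass-minimizing $\mb Z_2$ cycle for a reflected $C^{1,1}$ metric $\tilde g$, and then to invoke an \emph{interior} regularity theorem for $\mb Z_2$ minimizers in $C^{1,1}$ metrics. This is essentially a proposal to re-run Gr\"uter's whole reflection argument from scratch in the mod-2 setting. The paper instead takes a shortcut: after symmetrizing and slicing along a good sphere $\partial B_{\alpha r}$ (available for a.e.\ radius), it invokes Morgan's ``representative modulo two'' lemma \cite[Lemma 4.2]{Mor84} to replace the $\mb Z_2$ cycle $T'=\tilde T\llcorner B_{\alpha r}$ by a multiplicity-one integral ($\mb Z$) current $T_1$ with $\mathrm{spt}^0\,\partial T_1=\mathrm{spt}^2\,\partial T'$, and then restricts back to one side, $R=T_1\llcorner B^+_{\alpha r}$. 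The minimizing property of $R$ over $\mb Z$ is deduced from the mod-2 hypothesis via the elementary inequality $\mf M^0\geq\mf M^2$ together with $\mf M^0(T_1)=\mf M^2(T')$. One then applies Gr\"uter's $\mb Z$-free-boundary theorem \cite[Theorem~4.7]{Gr87} to $R$ verbatim, inheriting both the singular-set bounds and the orthogonality at once, with no need for a reflected metric or for re-proving any regularity theory.

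Compared to the paper's route, your proposal carries two genuine debts that it does not pay off. First, the interior regularity theorem for $\mb Z_2$-area-minimizers that you invoke would have to hold for a metric that is only $C^{1,1}$; you assert that ``interior regularity with respect to either metric is equivalent'' because $\tilde g$ agrees with the Euclidean metric to first order on $S$, but this is precisely the delicate analytic point that Gr\"uter's machinery is designed to handle, and one cannot simply wave it away — the needed $\mb Z_2$ version is not a standard citation. Second, while your decomposition-of-competitors argument for the minimality of $\tilde T$ can be made to work on the level of supports (the trace of a competitor on $S$ need not be rectifiable, only the support condition $\mathrm{spt}^2\,\partial Y\cap U\subset S$ is needed, and that does hold), this is not spelled out, and the distinction between ``the slice exists at level $0$'' and ``the support condition holds at level $0$'' is exactly the kind of thing that needs care in a mod-2 argument. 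The paper's Morgan-lemma detour sidesteps both issues at the cost of one citation. So while your plan could be executed, it is longer and requires establishing auxiliary theory; the key idea you are missing is that passing to a $\mb Z$-representative mod 2 lets you reuse the existing $\mb Z$ free-boundary regularity theorem directly. Your density argument (no $\mb Z_2$-mass of $T$ on $S$) and the orthogonality deduction from $\sigma$-invariance of the tangent plane are both correct and consistent with what is implicitly used in the paper.
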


\begin{proof}
It suffices to consider the regularity for $p\in S\cap \sptt T$. Denote by $B_r=B_r(p)$. Take $r$ small enough so that $B_r(p)\cap S$ is a $n$-ball and separates $B_r$ into $B^+_{r}$ and $B^-_r$. Without loss of generality, we can assume that $\sptt T\cap B_r^-=\emptyset$ (see \cite[\S 3]{Gr87} for more details).

For $x\in B_r$, denote by $\sigma$ the reflection across $S$ (see \cite[Remark 3.1]{Gr87}). 
We have $\sigma^2=\mathrm{id}$ and set
\[\wti T=T-\sigma_\#T.\]
Then $\sptt \wti T\subset V_r$ and $\sptt \partial \wti T\subset \partial V_r$, where $V_r=B_r^+\cup (S\cap B_r)\cup \sigma(B_r^+)$.
Furthermore, $B_{r/3}\subset V_r$ and $\mf M^2(\wti T)<\infty$. Then by Slicing Lemma \cite[\S 28]{Si}, we can take $\alpha\in(1/4,1/3)$ so that 
\[T':=\wti T\llcorner B_{\alpha r}\in \mf I_n(B_{\alpha r};\mb Z_2).\]

Recall that $T_1\in \mc R_n(B_{\alpha r};\mb Z)$ is {\em a representative modulo 2} (see \cite[Page 227]{Mor84}) of $T'$ if $T_1$ is of multiplicity one such that $T_1 = T'$ (mod 2) and $\mf M(T_1)=\mf M^2(T')$. 

Now since $\partial B_{\alpha}$ is simply connected and $\mc H^n(\sptt\partial T')=0$, applying \cite[Lemma 4.2]{Mor84} (letting $X_1=T'$ and $X_2$ be half of $\partial B_{\alpha}$ separated by $\sptt\partial T'$ therein), $T'$ has a representative modulo two, denoted as $T_1$, so that $\mathrm{spt}^0\,  \partial T_1=\mathrm{spt}^2\,\partial T'$. Set 
\[R:=T_1\llcorner B^+_{\alpha r}.\]

We now prove that $R$ is a $\mb Z$-area minimizer, and hence the regularity follows from \cite[Theorem 4.7]{Gr87}. To prove this, let $W\subset\subset B_{\alpha r}$ be an open set and $X\in \mc R_n(B_{\alpha r};\mb Z)$ with $\spt X\subset W$ and $\spt\partial X\cap U\subset S$. Then by the definition of $\mf M^2$, we have
\[\mf M^0_W(R+X)\geq \mf M^2_W(R+X)=\mf M^2_W(X+R\llcorner B^+_{\alpha r} ).\]
As we can see $R=T'\llcorner B^+_{\alpha r}=\wti T\llcorner B^+_{\alpha r}=T\llcorner B_{\alpha r}$ (mod 2), the inequality becomes
\begin{equation}\label{eq:R to T}
\mf M^0_W(R+X)\geq \mf M^2_W(X+T\llcorner B_{\alpha r}).
\end{equation}
Note that (\ref{eq:thm:area minimizer }) gives that 
\[\mf M^2_W(X+T\llcorner B_{\alpha r})\geq \mf M^2_W(T\llcorner B_{\alpha r})=\mf M^0_W(R).\]
Together with (\ref{eq:R to T}), we conclude that $R$ is area minimizing in the sense of \cite{Gr87} and we are done.
\end{proof}

\section{Deformation Theorem}
\label{sec:defrom}

Our goal in this section is to prove the Deformation Theorem (Theorem \ref{thm:deform}) which is a crucial ingredient in the proof of the Morse index estimates in the next section. We also establish a few preliminary results including the generic countability of FBMHs with bounded index and area and the notion of instability in the context of varifolds.

\subsection{Generic countability}
\label{subsec:counbtability}

As remarked in the introduction, the generic finiteness result for FBMH was proved in \cite{GWZ18} and \cite{Wang19}. The construction of Jacobi fields in the aforementioned papers is rather technical and complicated, especially for the higher multiplicity case in \cite{Wang19}. For the purpose of this paper, we only need generic countability instead of finiteness. We now give a more direct proof of this weaker result based on the work of \cite{GWZ18}. (Recall Section \ref{S:main} for our definition of $\Index(\Sigma)$ for almost properly embedded FBMH $\Sigma$ in $M$.)

\begin{definition}
For any $\Lambda>0$ and $I  \in \mb N$, we let $\mc M(\Lambda,I)$ be the collection of all smooth, almost properly embedded FBMH $\Sigma$ in $M$ with $\area(\Sigma) \leq \Lambda$ and $\Index(\Sigma)\leq I$.
\end{definition}

First, we show that a sequence of FBMHs in $\mc M(\Lambda,I)$ with \emph{the same index} would converge, after passing to a subsequence, to a limiting FBMH in $\mc M(\Lambda,I)$ which is either degenerate or has strictly lower index.

\begin{proposition}
\label{prop:degenerate or index decrease}
Let $\{\Sigma_j\}_{j \in \mb N}$ be a sequence of FBMHs in $\mc M(\Lambda, I)$ with $\mathrm{index}(\Sigma_j)=I$ for all $j$. Then after passing to a subsequence, $\Sigma_j$ will converge away from finitely many points locally smoothly (with multiplicity) to some $\Sigma\in \mc M(\Lambda,I)$. Moreover, either $\mathrm{index}(\Sigma)\leq I-1$ or $\Sigma$ is \emph{degenerate}, i.e. $\Sigma$ admits a non-trivial Jacobi field.
\end{proposition}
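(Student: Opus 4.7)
The plan is to combine the compactness theory for almost properly embedded free boundary minimal hypersurfaces with bounded index and area (developed by the first, third and fourth authors in \cite{GWZ18}) with a rescaling-to-Jacobi-field argument in the spirit of Sharp \cite{Sharp17} adapted to the free boundary setting. First, by the compactness theorem from \cite{GWZ18}, after extracting a subsequence the $\Sigma_j$ converge in the varifold sense to an integral varifold $V=\sum_i m_i|\Sigma^\infty_i|$, where each $\Sigma^\infty_i$ is a smooth almost properly embedded FBMH, the convergence is locally smooth and graphical away from a finite set of points $\mc Y\subset M$, and $\|V\|(M)\leq\Lambda$ by lower semi-continuity of mass. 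Setting $\Sigma:=\bigcup_i\Sigma^\infty_i$, we will verify $\Sigma\in\mc M(\Lambda,I)$ by combining the area bound with the upper semi-continuity of Morse index: any test section $v$ compactly supported in $\Sigma\setminus(\mathrm{Touch}(\Sigma)\cup\mc Y)$ lifts via the graphical convergence to a test section on $\Sigma_j$ for $j$ large, preserving the sign of $Q_{\Sigma_j}$, which yields $\Index(\Sigma)\leq\liminf\Index(\Sigma_j)=I$.

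With these preliminaries in place, the key remaining step is to show that if $\Index(\Sigma)=I$ then $\Sigma$ is degenerate. Discarding the trivial possibility that $\Sigma_j$ is eventually identically equal to $\Sigma$ (in which case one may pass to a non-constant subsequence or there is nothing to prove), I will split into two cases according to the multiplicity of the varifold convergence. In the multiplicity-one case, write $\Sigma_j$ as a normal graph of a function $u_j$ over a fixed compact piece of $\Sigma$ away from $\mc Y$, with the correct oblique boundary condition reflecting that $\partial\Sigma_j\subset\partial M$ meets orthogonally. Standard free boundary elliptic estimates, applied to the normalized sequence $\hat u_j:=u_j/\|u_j\|_{C^2}$, produce a subsequential limit $\hat u_\infty$ solving the Jacobi equation $L_\Sigma\hat u_\infty=0$ on $\Sigma$ with the conormal condition $\partial_\eta\hat u_\infty=h^{\partial M}(\nu,\nu)\hat u_\infty$ along $\partial\Sigma$, and $\hat u_\infty\not\equiv 0$ by normalization.

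In the higher multiplicity case, I follow the classical Sharp-type argument adapted to the free boundary context in \cite{GWZ18}: locally write two sheets as normal graphs $u^1_j<u^2_j$, and rescale the positive gap $w_j:=u^2_j-u^1_j$ by its supremum norm. The limit $\hat w_\infty$ is then a non-negative, non-trivial function on $\Sigma$ that satisfies the Jacobi equation in the interior and the free boundary Robin condition on $\partial\Sigma$; strong maximum principles (interior and Hopf-type at the boundary) force $\hat w_\infty>0$, which in particular is a non-trivial Jacobi field. In either case, $\Sigma$ admits a non-trivial Jacobi field, contradicting non-degeneracy, so $\Index(\Sigma)\leq I-1$ must hold otherwise.

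The main obstacle will be the behavior near the singular set $\mc Y$ and along the touching set $\mathrm{Touch}(\Sigma)$. Near $\mc Y$ one has to remove the concentration points from the domain, establish a uniform normalization of $u_j$ (or $w_j$) independent of the shrinking exhaustion, and then appeal to a removable singularities statement for Jacobi fields with bounded $L^2$-norm on FBMHs. Along $\mathrm{Touch}(\Sigma)$ one has to ensure that the graphical decomposition used to define $u_j$ is genuinely adapted to the almost proper embedded structure and that the limiting Jacobi field respects the definition of $\Index$ used here (namely, compact support away from the touching set); both issues have been handled by the graphical decomposition and Jacobi field machinery in \cite{GWZ18}, which we will invoke rather than redo.
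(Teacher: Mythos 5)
Your proposal takes a genuinely different route from the paper's, and the difference is exactly the point of this section. You aim to always extract a non-trivial Jacobi field on the limit $\Sigma$ by the Sharp-type renormalization of graph (or gap) functions, splitting into multiplicity-one and higher-multiplicity cases and handling the concentration set via removable singularities. This is the strategy of the generic \emph{finiteness} theorems, and the authors explicitly state (both in the introduction and at the start of \S 5.1) that this construction is ``rather technical and complicated, especially for the higher multiplicity case,'' which was only completed in \cite{Wang19}; the whole purpose of Proposition \ref{prop:degenerate or index decrease} is to \emph{avoid} it, because mere \emph{countability} suffices downstream. The paper's proof is instead: (a) if the convergence is globally smooth with multiplicity one, invoke the multiplicity-one Jacobi field from \cite{GWZ18} (the easy case); (b) if the convergence fails to be smooth near some $p\in\Sigma$, do not construct a Jacobi field at all --- instead take $k=\Index(\Sigma)$ negative directions on $\Sigma$, push them off a small ball $B_r(p)$ with a logarithmic cut-off (Appendix \ref{sec:appendix:log cutoff}) so they still give $k$ negative directions for $\delta^2\Sigma_j$ for $j$ large, and observe that the failure of smooth convergence forces $\Sigma_j$ to be unstable inside $B_r(p)$ by the compactness theorem \cite[Theorem 1.4]{GWZ18}, giving a $(k+1)$-st negative direction with disjoint support, hence $\Index(\Sigma_j)\ge k+1$, i.e.\ $k\le I-1$. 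Your plan buys the stronger conclusion that the limit is \emph{always} degenerate (when not eventually constant), but at the cost of rebuilding precisely the machinery the paper is trying to bypass; the paper's plan buys elementariness, and in particular never needs removable singularities for Jacobi fields or the higher-multiplicity gap renormalization. Two smaller cautions about your version: citing \cite{GWZ18} alone for the higher-multiplicity Jacobi field is not quite right --- that case is only handled in \cite{Wang19}; and your claimed upper semi-continuity $\Index(\Sigma)\le\liminf\Index(\Sigma_j)$ is only needed in the paper to set up case (b), where it is proved by exactly the same cut-off device, so listing it as a preliminary and then re-proving it implicitly later duplicates work.
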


\begin{proof}
If $\Sigma_j$ smoothly converges globally to $\Sigma$ with multiplicity one, then $\Sigma$ admits a non-trivial Jacobi field by the arguments in \cite{GWZ18}. 

It remains to consider the case that $\Sigma_j$ does not globally smoothly converge to $\Sigma$. We assume that $\mathrm{index}(\Sigma)=k$. Since the convergence is not smooth, there exists $p\in\Sigma$ so that for any $r>0$, $B_r(p)\cap \Sigma_j$ does not smoothly converge to $B_r(p)\cap\Sigma$. We now prove that $\mathrm{index}(\Sigma_j)\geq k+1$ for some $j$ large, which implies that $k \leq \Index(\Sigma_j)-1=I-1$.

Let $X_1,...,X_k$ be $k$ linearly independent normal vector fields on $\Sigma$ so that they span a linear subspace on which $Q_\Sigma$ is negative-definite. By normalization, we can assume $\int_{\Sigma}|X_i|^2=1$ for $1\leq i\leq k$. Since $X_i$ vanishes along the touching set of $\Sigma_i$, we can extend each $X_i$ to a smooth vector field on $M$ which is tangential to $\partial M$. Let us still denote the extended vector field by $X_i$.

By shrinking the radius $r$ if necessary, we can assume that $\{X_i|_{\Sigma\setminus B_r(p)}\}_{i=1}^k$ is linearly independent. Let $\xi_r$ be a logarithmic cut-off function (see Appendix \ref{sec:appendix:log cutoff}) satisfying $0\leq \xi_r\leq 1$ and $\xi_r|_{B_r(p)}=0$ and $\int_\Sigma|\nabla\xi_r|^2\rightarrow 0$ and $\xi_r\rightarrow 1$ as $r\rightarrow 0$. A direct computation yields that for $1\leq i\leq k$,
\[
Q_\Sigma(\xi_rX_i,\xi_rX_i)\leq Q_{\Sigma}(X_i,X_i) + C\int_{\Sigma}|\nabla\xi_r|^2.
\]
Since $Q_\Sigma(X_i,X_i) <0$, for $r$ small enough we have for $1\leq i\leq k$
\[
Q_\Sigma (\xi_rX_i,\xi_rX_i)<0.
\]
Recall that $\delta^2 \Sigma_j (\xi_rX_i,\xi_rX_i)\rightarrow mQ_{\Sigma}(\xi_r X_i,\xi_r X_i)$ as $j\rightarrow\infty$, where $m \in \mb N$ is the multiplicity. Then for sufficiently large $j$, we have
\begin{equation}
\label{E:k-index}
\delta^2 \Sigma_j (\xi_rX_i,\xi_rX_i)<0 \qquad \text{for } 1\leq i\leq k.
\end{equation}

On the other hand, since $\Sigma_j\cap B_r(p)$ does not smoothly converge to $\Sigma\cap B_r(p)$, $\Sigma_j$ cannot be stable away from touching set in $B_r(p)$ for some $j$ by the compactness theorem in \cite[Theorem 1.4]{GWZ18}. 
 Hence for this $j$, we can find a normal vector field of $\Sigma_j$ (vanishing on touching set, so it can be extended to all of $M$ and tangential to $\partial M$) such that $\spt X \subset B_r(p)$ and 
\begin{equation}
\label{E:extra-index}
\delta^2 \Sigma_j (X,X)<0.
\end{equation}
Therefore, (\ref{E:k-index}) and (\ref{E:extra-index}) together imply that $\mathrm{index}(\Sigma_j)\geq k+1$. This finishes the proof.
\end{proof}

We can now use an inductive argument to prove the generic countability result. Recall that a Riemannian metric $g$ on the compact manifolds $M$ with non-empty boundary is said to be \emph{bumpy} if no almost properly embedded FBMH in $M$ admits a non-trivial Jacobi field.

\begin{proposition}[Generic countability]
\label{prop:countablity of W}
Let $(M,\partial M)$ be a compact manifold with boundary equipped with a bumpy metric $g$. Then $\mc M(\Lambda,I)$ is at most countable for any fixed $\Lambda,I\geq0$.
\end{proposition}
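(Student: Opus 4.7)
The plan is to stratify $\mc M(\Lambda,I)$ by \emph{exact} Morse index and then deduce countability of each stratum by showing it is a discrete subset of a separable ambient metric space. Concretely, setting $\mc M^{(i)} := \{\Sigma \in \mc M(\Lambda,I) : \Index(\Sigma) = i\}$, one has the finite decomposition $\mc M(\Lambda,I) = \bigcup_{i=0}^{I} \mc M^{(i)}$, so it suffices to show that each $\mc M^{(i)}$ is at most countable.

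To establish discreteness, I would argue by contradiction. Suppose some $\Sigma \in \mc M^{(i)}$ is an accumulation point of $\mc M^{(i)}\setminus\{\Sigma\}$ inside the ambient space $\mc V_n(M) \cap \{V : \|V\|(M) \leq \Lambda\}$ equipped with the $\mf F$-metric. Then there is a sequence of pairwise distinct $\Sigma_j \in \mc M^{(i)}$ with $|\Sigma_j| \to |\Sigma|$ as varifolds. Since $\Index(\Sigma_j) = i$ for every $j$, applying Proposition \ref{prop:degenerate or index decrease} with $I$ replaced by $i$ and passing to a further subsequence yields a limit $\Sigma' \in \mc M(\Lambda,i)$, attained locally smoothly off finitely many points and with some multiplicity, such that either $\Index(\Sigma') \leq i - 1$ or $\Sigma'$ admits a non-trivial Jacobi field. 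Locally smooth convergence off a finite set implies varifold convergence, so uniqueness of limits in the $\mf F$-metric forces $\Sigma' = \Sigma$. The bumpy hypothesis on $g$ rules out the Jacobi-field alternative, leaving $\Index(\Sigma) \leq i-1$, which contradicts $\Sigma \in \mc M^{(i)}$.

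Finally, the ambient varifold space with mass $\leq \Lambda$ is compact (Allard-type compactness for varifolds of bounded mass), and in particular separable, in the $\mf F$-topology. A standard point-set argument then shows that every discrete subset of a separable metric space is countable: for each $p$ in the discrete subset, pick an isolating ball $B(p,r_p)$ and a point $q_p$ from a fixed countable dense set inside $B(p,r_p/2)$; the map $p \mapsto q_p$ is injective. Applying this fact to each $\mc M^{(i)}$ and summing over the finitely many values of $i$ yields the proposition.

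The only technical subtlety I anticipate is the identification $\Sigma' = \Sigma$ between the varifold limit postulated at the accumulation point and the stronger subsequential limit produced by Proposition \ref{prop:degenerate or index decrease}. This rests solely on the observation that locally smooth convergence off a finite set upgrades to varifold convergence, so that uniqueness of $\mf F$-limits compels the two limits to coincide. Once this identification is in hand, bumpyness together with the strict index drop in Proposition \ref{prop:degenerate or index decrease} closes the argument, and no further analytic input is needed.
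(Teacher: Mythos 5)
Your proposal is correct, and it packages the key input (Proposition \ref{prop:degenerate or index decrease} plus bumpiness) somewhat differently from the paper. The paper argues by induction on the index bound: given that $\mc M(\Lambda, I)$ is countable, it writes
\[
\mc M(\Lambda,I+1)=\bigcup_{k=1}^{\infty}\Big(\mc M(\Lambda,I+1)\setminus \bigcup_{\Sigma\in \mc M(\Lambda,I)}\overline{\mf B}^{\mf F}_{1/k}(\Sigma)\Big),
\]
and shows each term on the right is \emph{finite} — again by applying Proposition \ref{prop:degenerate or index decrease} and the bumpy hypothesis to rule out an infinite index-$(I+1)$ sequence staying uniformly $\mf F$-far from $\mc M(\Lambda,I)$. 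You instead stratify by \emph{exact} index and prove each stratum $\mc M^{(i)}$ is $\mf F$-discrete, then invoke separability (via compactness of $\mc V_n(M)\cap\{\|V\|\leq\Lambda\}$) to conclude countability. Both routes rest on the same compactness/index-drop mechanism, so there is no new analytic content, but your version avoids the induction entirely and replaces it with a standard topological fact, which is a clean simplification. Your treatment of the one genuinely delicate point — identifying the off-finite-set smooth limit $\Sigma'$ with the assumed $\mf F$-limit $\Sigma$ — is correct; it might be worth spelling out that since $|\Sigma|$ is a multiplicity-one varifold, the identity $m|\Sigma'|=|\Sigma|$ forces not only $\Sigma'=\Sigma$ as hypersurfaces but also $m=1$ (and likewise on each connected component if the multiplicities differ), so the index comparison really is between $\Sigma$ and itself.
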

\begin{proof}
We prove it by induction. It follows from Proposition \ref{prop:degenerate or index decrease} and the bumpiness of $g$ that $\mc M(\Lambda, 0)$ is finite. To establish the induction hypothesis, suppose that $\mc M(\Lambda, I)$ is countable for some $I \geq 0$. Then for any $r>0$, $\mc M(\Lambda,I+1)\setminus \cup_{\Sigma\in \mc M(\Lambda,I)}\overline{\mf B}^{\mf F}_r(\Sigma)$ is finite by Proposition \ref{prop:degenerate or index decrease}. Since
\[\mc M(\Lambda,I+1)=\bigcup_{k=1}^{\infty}\Big(\mc M(\Lambda,I+1)\setminus \bigcup_{\Sigma\in \mc M(\Lambda,I)}\overline{\mf B}^{\mf F}_{1/k}(\Sigma)\Big),\]
we conclude that $\mc M(\Lambda, I+1)$ is countable. This completes the proof by induction.
\end{proof}

\begin{remark}
The proof of Proposition \ref{prop:countablity of W} can also be used to obtain countability of $\mathcal W^{k+1}$ in the proof of \cite[Theorem 3.6]{Zhou19} without assuming that every $\Si\in \mathcal P^h$ is properly embedded in the definition of {\em good pair} in \cite[Section 3.3]{Zhou19}.
\end{remark}


\subsection{Unstable varifolds} 

To prove the Deformation Theorem, one needs to generalize the concept of Morse index for almost properly embedded FBMHs to the context of varifolds. In what follows, we use $B^k$ to denote the open unit ball (centered at origin) in $\mathbb{R}^k$ and $\overline{\bf B}_r^{\bf F}(V)$ to denote the closed ball of radius $r>0$ centered at $V \in \mc V_n(M)$ with respect to the $\bf F$-metric. A bar above it would denote its closure (in the corresponding metric).

\begin{definition}[cf. {\cite[Definition 4.1]{MN16}}]
\label{D:k-unstable}
Let $\Sigma \in \mc V_n (M)$ be stationary in $M$ with free boundary and $\epsilon\geq 0$. We
say that $\Sigma$ is {\em $k$-unstable in an $\epsilon$-neighborhood} if there exist $0<c_0 < 1$ and a smooth $k$-parameter family $\{F_v\}_{v\in \overline{B}^k}\subset \mathrm{Diff}(M)$ with $F_0 = \mathrm{Id}$, $F_{-v} = F_v^{-1}$ for all $v\in\overline B^k$ such that, for any $V\in \overline {\bf B}_{2\epsilon}^{\bf F}(\Sigma)$, the smooth function
\[ A^V:\overline B^k\rightarrow [0, \infty), \ \ \ A^V(v)=\Vert (F_v)_{\#}V\Vert (M)\]
satisfies:
\begin{itemize}
\item  $A^V$ has a unique maximum at $m(V)\in B^k_{c_0/\sqrt{10}}$;
\item $-\frac{1}{c_0}\,\mathrm{Id}\leq D^2A^V(u)\leq -c_0\,\mathrm{Id}$  for all $u\in \overline B^k$.
\end{itemize}
Here $(F_v)_\#$ denotes the push-forward operation. Also, because $\Sigma$ is stationary in $M$ with free boundary, we necessarily have $m(\Sigma)=0$.

We say that $\Sigma$ is {\em $k$-unstable} if it is stationary and $k$-unstable in an $\epsilon$-neighborhood for some $\epsilon>0$.
\end{definition}

\begin{remark}
If $\Sigma$ is a smooth, almost properly embedded FBMH with $\Index(\Sigma) \geq k$, then $\Sigma$ is $k$-unstable in the sense of Definition \ref{D:k-unstable}  (c.f. \cite[Proposition 4.3]{MN16}). However, the converse may not be true in general (e.g. when the touching set of $\Sigma$ has a positive $\mathcal{H}^n$-measure).
\end{remark}

\begin{lemma}
\label{lemma:deform:2}
For any $\delta<1/4$, there exists $T=T(\delta,\epsilon,\Sigma,\{F_v\},c_0)\geq 0$ such that for any $V\in \overline{\mf{B}}_{2\epsilon}^{\mf{F}}(\Sigma)$ and $v\in \overline{B}^k$ with $|v-m(V)|\geq \delta$, we have 
\[A^V(\phi^V(v,T))<A^V(0)-\frac{c_0}{10},\quad\text{ and }\quad |\phi^V(v,T)|>\frac{c_0}{4}.\]
\end{lemma}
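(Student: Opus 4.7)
The plan is to exploit the uniform strong concavity of $A^V$ (provided by Definition \ref{D:k-unstable}) and to track the single scalar quantity $r(t):=|\phi^V(v,t)-m(V)|$. The key point is that $\phi^V$ is gradient descent for $A^V$ starting away from the unique maximum $m(V)$, so $r(t)$ is non-decreasing; once $r(T)$ exceeds a suitably chosen threshold $R\in(0,1)$, both conclusions (a) and (b) will follow as pointwise consequences of strong concavity.

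First I would record the standard quadratic estimates coming from $-\tfrac{1}{c_0}\mathrm{Id}\leq D^2 A^V\leq -c_0\,\mathrm{Id}$ and $\nabla A^V(m(V))=0$: for every $u\in\overline B^k$,
\[
\tfrac{c_0}{2}|u-m(V)|^2 \;\leq\; A^V(m(V))-A^V(u) \;\leq\; \tfrac{1}{2c_0}|u-m(V)|^2, \qquad (u-m(V))\cdot\nabla A^V(u)\leq -c_0|u-m(V)|^2.
\]
Evaluated at $u=0$, the right-hand inequality together with $|m(V)|<c_0/\sqrt{10}$ yields the crucial bound $A^V(0)>A^V(m(V))-c_0/20$. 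Differentiating $r^2$ along the flow and using the gradient inequality above gives
\[
\tfrac{d}{dt}r^2 \;=\; -2(1-|\phi^V|^2)\,(\phi^V-m(V))\cdot\nabla A^V(\phi^V) \;\geq\; 2c_0(1-|\phi^V|^2)\,r^2 \;\geq\;0,
\]
so $r(t)$ is non-decreasing in $t$.

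The main step is to choose a constant $R\in(0,1)$ depending only on $c_0<1$ so that $R>\sqrt{3/10}$, $R-c_0/\sqrt{10}>c_0/4$, and $R+c_0/\sqrt{10}<1$ (for instance $R=3/5$ works uniformly). While $r(t)\leq R$, the triangle inequality gives $|\phi^V(v,t)|\leq R+c_0/\sqrt{10}$, hence $1-|\phi^V|^2\geq\beta:=1-(R+c_0/\sqrt{10})^2>0$; combined with $r(0)\geq\delta$ this yields $r(t)\geq\delta e^{c_0\beta t}$, so taking $T\geq (c_0\beta)^{-1}\log(R/\delta)$ (which depends only on $\delta$ and $c_0$) forces $r(T)\geq R$. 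At this $T$: the reverse triangle inequality gives $|\phi^V(v,T)|\geq r(T)-|m(V)|>R-c_0/\sqrt{10}>c_0/4$, which is (b); while strong concavity gives $A^V(m(V))-A^V(\phi^V(v,T))\geq\tfrac{c_0}{2}R^2>\tfrac{3c_0}{20}$, and combined with $A^V(0)>A^V(m(V))-c_0/20$ this yields $A^V(\phi^V(v,T))<A^V(0)-c_0/10$, which is (a). Uniformity of $T$ over $V\in\overline{\mf B}^{\mf F}_{2\epsilon}(\Sigma)$ is automatic because every constant in the argument comes from Definition \ref{D:k-unstable} and is uniform in $V$.

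I expect the main subtlety to be the combinatorial juggling of constants in the choice of $R$: a single pointwise lower bound $r(T)\geq R$ must simultaneously control $|\phi^V(v,T)|$ from below (for (b)) and $A^V(\phi^V(v,T))$ from above (for (a)), and verifying that an admissible $R<1$ exists is precisely where the specific bound $|m(V)|<c_0/\sqrt{10}$ (rather than a weaker $|m(V)|<c_0$) becomes indispensable — it provides the $c_0/20$ slack needed to beat the target $c_0/10$.
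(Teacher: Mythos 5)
Your proof is correct and follows the same route as Marques--Neves \cite[Lemma 4.5]{MN16}, to which the paper simply defers: track $r(t)=|\phi^V(v,t)-m(V)|$, use the uniform strong concavity of $A^V$ to obtain a Gronwall-type exponential escape of the gradient flow $\phi^V$ from the unique maximum $m(V)$, and read off both conclusions once $r(T)$ exceeds a fixed threshold $R$. Your constant bookkeeping checks out (with $R=3/5$ admissible for all $c_0<1$, and $|m(V)|<c_0/\sqrt{10}$ supplying precisely the $c_0/20$ slack needed to reach the $c_0/10$ target).
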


\begin{proof}
The proof is the same as that of \cite[Lemma 4.5]{MN16}.
\end{proof}

\subsection{Deformation theorem}

We now prove the key Deformation Theorem. Suppose that $X$ is a cubical complex of dimension $k$. Let $\{\Phi_i\}_{i\in \mb{N}}$ be a sequence of continuous maps from $X$ into $\mc{Z}_n(M,\partial M;\mf{F};G)$. Set
\[L=\mf{L}(\{\Phi_i\}_{i\in \mb{N}}):=\limsup_{i\to \infty}\sup_{x\in X}\mf{M}(\Phi_i(x)).\]
We will adapt the Deformation Theorem A in \cite{MN16} to our setting. Basically, the deformation theorem can produce another  sequence which is homotopic to $\{\Phi_i\}_{i\in \mb{N}}$ such that the new sequence avoids free boundary minimal hypersurfaces with large index. 

\begin{theorem}[Deformation Theorem]
\label{thm:deform}
Suppose that 
\begin{enumerate}
\item $\Sigma\in \mc{V}_n(M)$ is stationary in $M$ with free boundary and $(k+1)$-unstable;
\item $K\subset \mc{V}_n(M)$ is a subset such that $\mf{F}(\Sigma,K)>0$ and $\mf{F}(|\Phi_i|(X),K)>0$ for all $i\geq i_0$;
\item $\|\Sigma\|(M)=L$.
\end{enumerate}
Then there exist $\bar{\epsilon}>0$, $j_0\in \mb{N}$, and another sequence $\{\Psi_i\}_{i\in \mb{N}}$ of maps from $X$ into  $\mc{Z}_n(M,\partial M;\mf{F};G)$ so that 
\begin{enumerate}[label=(\roman*)]
\item $\Psi_i$ is homotopic to $\Phi_i$ in the $\mf{F}$-topology for all $i\in \mb{N}$;
\item $\mf{L}(\{\Psi_i\}_{i\in \mb{N}})\leq L$;
\item  $\mf{F}(|\Psi_i|(X), \overline{\mf{B}}_{\bar{\epsilon}}^{\mf{F}}(\Sigma)\cup K)>0$ for all $i\geq j_0$.
\end{enumerate}
\end{theorem}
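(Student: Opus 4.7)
My plan is to adapt the proof of \cite[Deformation Theorem A]{MN16} to the free boundary setting. The key tool is the $(k+1)$-parameter family $\{F_v\}_{v\in\overline{B}^{k+1}}\subset\mathrm{Diff}(M)$ provided by the $(k+1)$-unstability of $\Sigma$, together with the negative-gradient flow $\phi^V(v,t)$ on $\overline{B}^{k+1}$ of the mass functions $A^V(v)=\|(F_v)_\# V\|(M)$. The crucial dimension count is $\dim X=k<k+1$: this gives enough room in the parameter space to choose, continuously in $x\in X$, a deformation parameter that avoids the unique interior maxima $m(|\Phi_i(x)|)\in B^{k+1}_{c_0/\sqrt{10}}$ and therefore strictly lowers the mass near $\Sigma$.

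\textbf{Step 1 (Calibration).} Fix $\bar\epsilon>0$ small enough that $\overline{\mf B}^{\mf F}_{4\bar\epsilon}(\Sigma)$ is disjoint from $K$ and $|\Phi_i|(X)$ for $i\geq i_0$, and that $4\bar\epsilon<\epsilon$ where $\epsilon$ is the scale in the $(k+1)$-unstability of $\Sigma$. Set $\delta=\tfrac{1}{4}(1-c_0/\sqrt{10})$ and apply Lemma~\ref{lemma:deform:2} to get a time $T>0$. Apply the free-boundary analogue of \cite[Lemma~4.4]{MN16} to produce a constant $\bar\eta>0$, shrinking it so that $\bar\eta<\bar\epsilon$.

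\textbf{Step 2 (Target parameters).} Pick any constant $v_0\in\partial B^{k+1}$. For every $V\in\overline{\mf B}^{\mf F}_{2\bar\epsilon}(\Sigma)$ we have $|v_0-m(V)|\geq 1-c_0/\sqrt{10}>\delta$, so Lemma~\ref{lemma:deform:2} yields $w^*(V):=\phi^V(v_0,T)$ satisfying $|w^*(V)|>c_0/4$ and $A^V(w^*(V))<\|V\|(M)-c_0/10$, with $V\mapsto w^*(V)$ continuous in the $\mf F$-topology.

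\textbf{Step 3 (Homotopy).} Fix a continuous cutoff $\chi:\mc V_n(M)\to[0,1]$ with $\chi\equiv 1$ on $\overline{\mf B}^{\mf F}_{\bar\epsilon}(\Sigma)$ and $\chi\equiv 0$ off $\mf B^{\mf F}_{2\bar\epsilon}(\Sigma)$. Define $H_i:X\times[0,1]\to\mc Z_n(M,\partial M;\mf F;G)$ by
$H_i(x,s):=(F_{s\,\chi(|\Phi_i(x)|)\,w^*(|\Phi_i(x)|)})_\#\Phi_i(x)$,
with the convention that the exponent is $0$ wherever $\chi=0$. Then $H_i$ is continuous with $H_i(\cdot,0)=\Phi_i$, so $\Psi_i:=H_i(\cdot,1)$ is homotopic to $\Phi_i$ in the $\mf F$-topology, giving (i). For $x$ with $|\Phi_i(x)|\notin\mf B^{\mf F}_{2\bar\epsilon}(\Sigma)$ we have $\Psi_i(x)=\Phi_i(x)$, so (ii) and the $K$-part of (iii) follow from hypothesis (2). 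For $x$ with $|\Phi_i(x)|\in\overline{\mf B}^{\mf F}_{\bar\epsilon}(\Sigma)$, Step 2 gives $\mf M(\Psi_i(x))\leq\|\Phi_i(x)\|(M)-c_0/10$, and the analogue of \cite[Lemma~4.4]{MN16} ensures $\mf F(|\Psi_i(x)|,\Sigma)>2\bar\eta>\bar\epsilon$.

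\textbf{Main obstacle.} The principal technical subtlety lies in the annular regime $\mf B^{\mf F}_{2\bar\epsilon}(\Sigma)\setminus\overline{\mf B}^{\mf F}_{\bar\epsilon}(\Sigma)$, where the straight-line path $s\mapsto s\chi w^*$ is not a gradient trajectory of $A^V$, so $\|H_i(x,s)\|(M)$ could in principle bulge above $L$. The analogue of \cite[Lemma~4.4]{MN16} converts any such mass excursion of size $\bar\eta$ into a uniform $\mf F$-distance $\geq 2\bar\eta$ from $\Sigma$; since $\bar\eta<\bar\epsilon$, this preserves (iii) along the whole homotopy. Closing the limsup bound $\mf L(\{\Psi_i\})\leq L$ in (ii) uses $\|\Sigma\|(M)=L$ together with the fact that any $o(1)$ excursion in the annulus is absorbed by the definite mass loss $c_0/10$ produced at $s=1$ deep inside the $\bar\epsilon$-neighborhood, combined with the bookkeeping of the three constants $(\bar\epsilon,\bar\eta,c_0)$ across the three regimes.
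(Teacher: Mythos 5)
Your Step 2 construction of the target parameters $w^*(V)$ is fine, and you are right that conditions (i) and the $K$-part of (iii) are easy from $\Psi_i=\Phi_i$ outside $\mf B^{\mf F}_{2\bar\epsilon}(\Sigma)$. The genuine gap is in condition (ii), and it occurs precisely in the annular region $\bar\epsilon\leq\mf F(|\Phi_i(x)|,\Sigma)\leq 2\bar\epsilon$. There you set $\Psi_i(x)=(F_{\chi w^*})_\#\Phi_i(x)$ with $\chi\in(0,1)$, so $\mf M(\Psi_i(x))=A^V(\chi w^*)$ for $V=|\Phi_i(x)|$. But $A^V$ is strictly concave ($D^2A^V\leq -c_0\,\mathrm{Id}$, $-\tfrac{1}{c_0}\,\mathrm{Id}\leq D^2A^V$) with its unique maximum at $m(V)\in B^{k+1}_{c_0/\sqrt{10}}$, which lies \emph{between} $0$ and $w^*(V)$. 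So the straight segment $s\mapsto s\chi w^*$ generically passes near $m(V)$, and $A^V(\chi w^*)$ can be as large as $A^V(m(V))\leq A^V(0)+\tfrac{1}{2c_0}|m(V)|^2\leq A^V(0)+\tfrac{c_0}{20}$. Since $A^V(0)=\|V\|(M)$ can be arbitrarily close to $L$, you get points $x$ with $\mf M(\Psi_i(x))\geq L+\tfrac{c_0}{20}-o(1)$, which is bounded away from $L$. Thus $\mf L(\{\Psi_i\})>L$ and (ii) fails. Your closing remark that the $c_0/10$ mass loss at $\chi=1$ ``absorbs'' the excursion is not correct: the loss and the excursion happen at \emph{different} values of $x$, and (ii) is a supremum over all $x$. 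This also breaks your appeal to the analogue of \cite[Lemma~4.4]{MN16} in the annulus: that lemma requires the pushforward to not increase mass by more than $\bar\eta$, a hypothesis you cannot verify there.

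The paper's proof avoids this by never interpolating linearly to a far-away parameter. Instead, it constructs a small perturbation $\hat H_i(x,\cdot)$ of the origin valued in $B^{k+1}_{2^{-i}}(0)$, cut off by $c(\mf F(|\Phi_i(x)|,\Sigma))$, and then applies the negative gradient flow $\phi_i^x$ for a cut-off time $c(\cdot)T_i$. Because $|\hat H_i|\leq 2^{-i}$ and $A^V$ is uniformly Lipschitz, the perturbation step raises mass by at most $O(2^{-i})$, and the gradient-flow step is monotone non-increasing in mass; hence $\mf M(\Psi_i(x))\leq\mf M(\Phi_i(x))+O(2^{-i})$ \emph{for all} $x$, giving (ii) cleanly. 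The dimension count $\dim X=k<k+1$ — which you mention in your preamble but do not actually use, since your fixed $v_0\in\partial B^{k+1}$ already satisfies $|v_0-m(V)|\geq 1-c_0/\sqrt{10}$ — is exactly what is needed in the paper to perturb the \emph{small} starting point $\hat H_i(x,1)$ continuously in $x$ so that it avoids the moving maxima $m_i(x)$; this is what makes the subsequent gradient flow actually escape with a definite mass drop. To repair your proof you would need to replace the straight-line homotopy by this perturb-then-flow construction, and correspondingly reinstate the dimension argument.
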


\begin{remark}
Note that the subset $K$ may not be compact in our applications.
\end{remark}

\begin{proof}[Proof of Theorem \ref{thm:deform}]
Since the strategy of the proof follows from those of \cite[Theorem 5.1]{MN16}, we will only sketch the main steps and point out necessary modifications for the free boundary setting. 

Let $d=\mf{F}(\Sigma,K)>0$. By assumption (1), there exists a constant $\epsilon>0$ such that $\Sigma$ is $(k+1)$-unstable in an $\epsilon$-neighborhood. Suppose that $\{F_v\}_{v\in \oli{B}^{k+1}}$ and $c_0$ are given as in Definition \ref{D:k-unstable}. Since $\mf F(\Sigma,K)>0$, by possibly changing $\{\epsilon, \{F_v\},c_0\}$, we may assume that  for any $V\in \oli{\mf{B}}_{2\epsilon}^\mf{F}(\Sigma)$,
\begin{equation}
\label{equ:deform:3}
\min_{v\in \oli{B}^{k+1}} \mf F((F_v)_{\#}V,K) > \frac{d}{2}. 
\end{equation}

For each fixed $i\in \mb{N}$, since $\Phi_i:X\to \mc{Z}_n(M,\partial M; \mf{F};G)$ is  continuous, we may assume that $X(k_i)$ is a sufficiently fine subdivision of $X$ such that 
\[\mf{F}(|\Phi_i(x)|,|\Phi_i(y)|)<\delta_i\]
for any $x,y$ belonging to the same cell in $X(k_i)$ with $\delta_i=\min\{2^{-(i+k+2)},\epsilon/4\}$. Recall that for $\tau \in \mc{Z}_n(M,\partial M; \mf{F};G)$, $|\tau|$ is defined to be $|T|$ where $T \in Z_n(M,\partial M;G)$ is the unique canonical representative of $\tau$.

Note that  $A^V: \oli{B}^{k+1}\to [0,\infty)$ is a smooth function for any $V\in  \oli{\mf{B}}_{2\epsilon}^\mf{F}(\Sigma)$, so we can assume that for any $x,y$ belonging to the same cell in $X(k_i)$ with $\mf{F}(|\Phi_i(x)|,\Sigma)\leq 2\epsilon$ and  $\mf{F}(|\Phi_i(y)|,\Sigma)\leq 2\epsilon$, we have
\[|m(|\Phi_i(x)|) -m(|\Phi_i(y)|)|<\delta_i.\]

For any $\eta>0$, we use $U_{i,\eta}$ to denote the union of all cells $\sigma\in X(k_i)$ so that $\mf{F}(|\Phi_i(x)|,\Sigma)<\eta$ for all $x\in \sigma$. Then $U_{i,\eta}$ is a subcomplex of $X(k_i)$. If a cell $\beta\notin U_{i,\eta}$, then there exists some point $x'\in \beta$ such that $\mf{F}(|\Phi_i(x')|,\Sigma)\geq \eta$. Hence, for any $y\in \beta$, we have 
\begin{equation}\label{equ:deform:notinu}
\mf{F}(|\Phi_i(y)|,\Sigma)\geq \eta-\delta_i.
\end{equation}
In the following, for any $x\in U_{i,2\epsilon}$, we will use the notation 
\[A_i^x=A^{|\Phi_i(x)|},\,\, m_i(x)=m(|\Phi_i(x)|)\,\text{ and }\, \phi_i^x=\phi^{|\Phi_i(x)|}.\]
Following the construction in \cite[Theorem 5.1]{MN16}, we can construct a continuous homotopy 
\[\hat{H}_i:U_{i,2\epsilon}\times [0,1]\to B_{2^{-i}}^{k+1}(0)\quad \text{so that } \hat{H}_i(x,0)=0\,\, \forall\, x\in U_{i,2\epsilon},\]
and 
\begin{equation}\label{equ:deform:4}
\inf_{x\in U_{i,2\epsilon}}|m_i(x)-\hat{H}_i(x,1)|\geq \eta_i>0\quad \text{for some}\, \eta_i>0.
\end{equation}
The key idea is that the subspaces \[A_i=\{(x,0)\in X\times \oli{B}^{k+1}:  x\in U_{i,2\epsilon}\}\quad \text{and}\quad B_i=\{(x,m_i(x))\in X\times \oli{B}^{k+1} :x\in U_{i,2\epsilon}\}\] have both dimension at most $k$ and are contained in a space of dimension $2k+1$. So it is possible to perturb $A_i$ slightly such that $A_i\cap B_i=\emptyset$ which gives (\ref{equ:deform:4}).

Let $c:[0,\infty)\to [0,1]$ be a cutoff function which is non-increasing, and $c$ is equal to 1 in a neighborhood of $[0,3\epsilon/2]$, and $0$ in a neighborhood of $[7\epsilon/4,\infty)$. By (\ref{equ:deform:notinu}), if $y\notin U_{i,2\epsilon}$, then 
$\mf{F}(|\Phi_i(y)|,\Sigma)\geq 2\epsilon-\delta_i\geq 7\epsilon/4$, since $\delta_i\leq \epsilon/4.$ Therefore, 
\[c(\mf{F}(|\Phi_i(y)|,\Sigma))=0\,\,\text{ for any }\, y\notin U_{i,2\epsilon}.\]
We now consider a map $H_i:X\times [0,1]\to B_{2^{-i}}^{k+1}(0)$ given by 
\[H_i(x,t)=\hat{H}_i(x,c(\mf{F}(|\Phi_i(x)|,\Sigma))t) \quad \text{ if }\, x\in U_{i,2\epsilon}\]
and 
\[H_i(x,t)=0 \quad \text{ if }\, x\in X\setminus U_{i,2\epsilon}.\]
Then $H_i$ is continuous.

Now we are ready to construct the new sequence $\{\Psi_i\}$ of maps from $X$ into  $\mc{Z}_n(M,\partial M;\mf{F};G)$. With $\eta_i$ given by (\ref{equ:deform:4}), let $T_i=T_i(\eta_i,\epsilon,\Sigma,\{F_v\},c_0)$ be given by Lemma \ref{lemma:deform:2}. Set 
$D_i:X\to \oli{B}^{k+1}$ such that
\[D_i(x)=\phi_i^x\big(H_i(x,1), c(\mf{F}(|\Phi_i(x)|,\Sigma))T_i\big)\quad \text{ if }\, x\in U_{i,2\epsilon}\]
and 
\[D_i(x)=0\quad \text{ if } x\in X\setminus U_{i,2\epsilon}.\] 
Then $D_i$ is continuous. Now we define
\[\Psi_i:X\to \mc{Z}_n(M,\partial M;\mf{F};G),\quad \Psi_i(x)=(F_{D_i(x)})_{\#}(\Phi_i(x)).\]
In particular, we have
\[\Psi_i(x)=\Phi_i(x),\quad \text{ if }\, x\in X\setminus U_{i,2\epsilon}.\]

Since the map $D_i$ is homotopic to the zero map in $\oli{B}^{k+1}$, we obtain that $\Psi_i$ is homotopic to $\Phi_i$ in the $\mf{F}$-topology for all $i\in \mb{N}$. Following the steps in \cite[Theorem 5.1]{MN16}, one can check $\Psi$ is the desired map.

\end{proof}

\section{Index estimates}
\label{sec:index estimate}

In this section, we will use the deformation theorem (Theorem \ref{thm:deform}) to prove the index estimates. We will first prove such estimates for manifolds with \emph{bumpy} metrics. Recall that a metric is bumpy if every smooth almost properly embedded FBMH is non-degenerate, i.e. admits no non-trivial Jacobi field (which can be non-zero on the touching set). It was proved by White (see \cite{Whi91}, \cite{Whi17}) and Ambrozio-Carlotto-Sharp \cite{ACS17} (in the free boundary setting) that bumpy metrics are generic in the Baire sense. 


\begin{theorem}[Index estimates for bumpy metrics]
\label{thm:index:bumpy}
Let $(M^{n+1},\partial M)$ be a compact manifold with boundary equipped with a bumpy metric $g$ and $3\leq (n+1)\leq 7$. Let $X$ be a $k$-dimensional cubical complex and $\Phi: X\to \mc{Z}_n(M,\partial M; \mf{F}; G)$ be a continuous map. Let $\Pi$ be the class of all continuous maps $\Phi':X\to \mc{Z}_n(M,\partial M;\mf{F};G)$ such that $\Phi$ and $\Phi'$ are homotopic to each other in the flat topology.  Suppose that $\{\Phi_i\}_{i\in \mb{N}}$ is a min-max sequence in $\Pi$  such that 
\[L=\mf{L}(\{\Phi_i\}_{i\in \mb{N}})=\mf{L}(\Pi)>0.\]

Then there is $\Sigma\in \mf{C}(\{\Phi_i\}_{i\in \mb{N}})$ with support a smooth, compact, almost properly embedded, free boundary minimal hypersurface such that 
\[\mf{L}(\Pi)=\|\Sigma\|(M)\quad\text{ and }\,\,\Index(\spt \Sigma)\leq k.\]
\end{theorem}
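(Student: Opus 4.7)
The plan is to follow the inductive deformation scheme of Marques--Neves \cite[\S 6]{MN16}, leveraging the Deformation Theorem (Theorem \ref{thm:deform}) and the generic countability result (Proposition \ref{prop:countablity of W}) already established in the free boundary setting. I would argue by contradiction: assume no $\Sigma \in \mf{C}(\{\Phi_i\}_i)$ has $\Index(\spt \Sigma) \leq k$. After a tightening step and an application of the Min-max Theorem (Theorem \ref{thm:minmax}), we may assume every element of $\mf{C}(\{\Phi_i\}_i)$ is a stationary integral varifold with free boundary whose regular representatives are smooth, almost properly embedded FBMHs with total component index at least $k+1$.

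\textbf{Countable collection of bad varifolds.} Let $\mc{W}$ denote the set of stationary integral varifolds $V = \sum_{i=1}^N m_i |\Sigma_i|$ with $\|V\|(M) = L$, each $\Sigma_i$ a smooth almost properly embedded FBMH, and $\sum_i \Index(\Sigma_i) \geq k+1$. The mass bound $\|V\|(M) = L$ caps each $\area(\Sigma_i)$ and the multiplicities $m_i$; combined with bumpiness and Proposition \ref{prop:countablity of W}, the union $\bigcup_{I} \mc{M}(L, I)$ is countable, so $\mc{W}$ is countable. Enumerate $\mc{W} = \{V_j\}_{j \in \mb N}$. For each $V_j$, pick $k+1$ linearly independent normal sections $\{X_\ell\}$ compactly supported in the smooth parts $\Sigma_i \setminus \mathrm{Touch}(\Sigma_i)$ spanning a negative-definite subspace of the combined second-variation form (available since $\sum_i \Index(\Sigma_i) \geq k+1$). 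Extend each $X_\ell$ to a smooth vector field on $M$ tangent to $\partial M$, and use their flows to build a $(k+1)$-parameter family $\{F_v\}_{v \in \oli{B}^{k+1}} \subset \mathrm{Diff}(M)$ such that the Hessian of $A^V(v) = \|(F_v)_\# V\|(M)$ is uniformly negative definite on an $\mf{F}$-neighborhood of $V_j$, exhibiting $V_j$ as $(k+1)$-unstable in the sense of Definition \ref{D:k-unstable}.

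\textbf{Iterative deformation and contradiction.} Set $\Phi_i^{(0)} = \Phi_i$ and $K_0 = \emptyset$. At step $j$, apply Theorem \ref{thm:deform} to $\Sigma = V_j$, the sequence $\{\Phi_i^{(j-1)}\}$, and the obstruction set $K = K_{j-1}$ (where $\mf F(V_j, K_{j-1}) > 0$ by construction) to produce a homotopic sequence $\{\Phi_i^{(j)}\}$ with $\mf{L}(\{\Phi_i^{(j)}\}) \leq L$ and an $\bar\epsilon_j > 0$ guaranteeing separation from a neighborhood of $V_j$; then enlarge $K_j := K_{j-1} \cup \oli{\mf{B}}_{\bar\epsilon_j/2}^{\mf{F}}(V_j)$, shrinking $\bar\epsilon_j$ as needed so that $\mf F(V_{j+1}, K_j) > 0$. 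A diagonal choice $\Psi_i = \Phi_i^{(j(i))}$ with $j(i) \to \infty$ produces a sequence $\{\Psi_i\} \subset \Pi$ (since $\mf F$-homotopy implies flat-homotopy) which is still a min-max sequence for $\Pi$ with $\mf L(\{\Psi_i\}) = L$, and whose critical set $\mf{C}(\{\Psi_i\})$ avoids every $V_j$. Applying Theorem \ref{thm:minmax} to $\{\Psi_i\}$ yields some $\Sigma \in \mf{C}(\{\Psi_i\})$ with smooth FBMH support; under the contradiction hypothesis $\Sigma \in \mc{W}$, so $\Sigma = V_{j_0}$ for some $j_0$, contradicting the construction.

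\textbf{Main obstacle.} The hardest step is establishing the $(k+1)$-unstable property for elements of $\mc W$ with multiplicities and non-empty touching sets. One must verify that the negative sections $X_\ell$, which vanish on $\mathrm{Touch}(\Sigma_i)$ by our definition of $\Index$, extend to ambient vector fields tangent to $\partial M$ whose second variation along $V_j$ remains uniformly negative-definite on an $\mf F$-neighborhood: here the bumpy hypothesis enters crucially, upgrading strict negativity at $V_j$ into a uniform Hessian bound $D^2 A^V \leq -c_0 \,\mathrm{Id}$ for all $V \in \oli{\mf B}_{2\epsilon}^{\mf F}(V_j)$, as demanded by Definition \ref{D:k-unstable}. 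A secondary bookkeeping issue is ensuring the diagonal sequence remains continuous in the $\mf F$-topology (provided by Theorem \ref{thm:deform}) and that the separations $\mf F(|\Psi_i|(X), \oli{\mf B}_{\bar\epsilon_j}^{\mf F}(V_j)) > 0$ survive the limit $i \to \infty$ for every fixed $j$.
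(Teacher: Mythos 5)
Your overall strategy matches the paper's — both follow the Marques--Neves deformation scheme using the Deformation Theorem and the generic countability result — but there is a genuine gap in how you close the argument. The theorem asserts that the low-index varifold $\Sigma$ lies in $\mf{C}(\{\Phi_i\}_{i\in\mb N})$, the critical set of the \emph{original} min-max sequence. After your iterative deformations and diagonalization you obtain $\Sigma\in\mf{C}(\{\Psi_i\})$, a critical set of a \emph{different} sequence, and deformations can move the critical set away from $\mf{C}(\{\Phi_i\})$. Your final step — ``under the contradiction hypothesis $\Sigma\in\mc W$'' — therefore does not follow: the contradiction hypothesis concerns $\mf{C}(\{\Phi_i\})$, and there is no a priori reason that $\Sigma$ lies in it or that $\Sigma$ has high index. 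To fix this, the paper does not argue by a global contradiction but instead fixes $r>0$, defines $\mc W(r)$ to be the stationary smooth varifolds of mass $L$ that are $\mf F$-distance $\geq r$ from $\mf{C}(\{\Phi_i\})$, and proves a separation lemma (Lemma \ref{lemma:index:empty}) showing $|\Phi_i|(X)$ is eventually uniformly far from $\mc W(r)$. The obstruction set in every application of the Deformation Theorem always contains $\overline{\mf B}^{\mf F}_{\epsilon_0}(\mc W(r))$, so the resulting varifold is automatically outside $\mc W(r)$, hence within $r$ of $\mf{C}(\{\Phi_i\})$; letting $r\to 0$ and using compactness then yields the desired element of $\mf{C}(\{\Phi_i\})$. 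Your choice $K_0=\emptyset$ discards exactly this control.

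A secondary remark: in your ``main obstacle'' paragraph you attribute the uniform Hessian bound $D^2A^V\leq -c_0\,\mathrm{Id}$ on $\overline{\mf B}^{\mf F}_{2\epsilon}(V_j)$ to the bumpy hypothesis. That is a misattribution — this uniform bound follows from continuity of the second variation in the varifold topology, exactly as in \cite[Proposition 4.3]{MN16} and the remark following Definition \ref{D:k-unstable}. Bumpiness enters the proof only through Proposition \ref{prop:countablity of W} to give countability of the set $\mc W^{k+1}$ of high-index FBMHs. Also, when enumerating $\{V_j\}$ and enlarging $K_j$, you need to allow for the possibility $V_{j+1}\in K_j$; the paper handles this by simply skipping any $\Sigma_j$ already swallowed by an earlier ball and by using countability to choose the radii $\epsilon_q$ so that no $\Sigma_j$ lands on a boundary sphere. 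These are bookkeeping details, but without them the step ``shrinking $\bar\epsilon_j$ as needed so that $\mf F(V_{j+1},K_j)>0$'' is not always available.
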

\begin{proof}
By the compactness result \cite[Theorem 1.1]{GWZ18} for free boundary minimal hypersurfaces, it suffices to show that for any $r>0$, there exists a  varifold $\wti{\Sigma}\in \mc{V}_n(M)$ which is stationary in $M$ with free boundary and  whose support is a smooth compact embedded minimal hypersurface such that  $\mf{F}\big(\wti{\Sigma}, \mf{C}(\{\Phi_i\}_{i\in \mb{N}})\big)<r,$
\[\mf{L}(\Pi)=\|\wti{\Sigma}\|(M),\quad \text{ and  }\,\, \Index(\spt \wti{\Sigma})\leq k.\]
Once we have this, we can choose $\wti{\Sigma}_j \in \mc{V}_n(M)$ such that $\mf{F}\big(\wti{\Sigma}_j, \mf{C}(\{\Phi_i\})\big)<j^{-1}$ and thus the varifold limit $\wti{\Sigma}$ of $\wti{\Sigma}_j$ satisfies $\wti \Sigma \in \mf{C}(\{\Phi_i\})$.

Let $\mc{W}$ be the set of all stationary varifolds $V$ in $M$ with free boundary such that $\|V\|(M)=L$ and the support of $V$ is a smooth compact embedded free boundary minimal hypersurface. Now we fix $r>0$ and set 
\[\mc{W}(r):=\{V\in \mc{W}: \mf{F}\big(V, \mf{C}(\{\Phi_i\}_i)\big)\geq r\}.\]
We can easily argue by contradiction to obtain the following result.
\begin{lemma}\label{lemma:index:empty}
There exists $i_0\in \mb{N}$ and $\epsilon_0>0$ such that $\mf{F}(|\Phi_i|(X),\mc{W}(r))>\epsilon_0$ for all $i\geq i_0$.
\end{lemma}

Let $\mc{W}^{k+1}$ be the collection of elements in $\mc{W}$ whose support has index greater than or equal to $(k+1)$. Now it suffices to show $\mc{W}\setminus (\mc{W}(r)\cup \mc{W}^{k+1})$ is non-empty.

Since the metric $g$ is bumpy, the set $\mc{W}^{k+1}$ is countable by Proposition \ref{prop:countablity of W}. So we can write 
\[\mc{W}^{k+1}\setminus \overline{\mf{B}}_{\epsilon_0}^{\mf{F}}(\mc{W}(r))=\{\Sigma_1,\Sigma_2,\Sigma_3,\ldots\}.\]
Note that for any $i\in \mb{N}$,  $\Sigma_i$ satisfies $\Index(\spt \Sigma_i)\geq (k+1)$ and thus $\Sigma_i$ is $(k+1)$-unstable. 

Then our argument from here is the same as that of \cite{MN16}, and we also present it here for the sake of completeness.

Using our Deformation Theorem (Theorem \ref{thm:deform}) with $K=\overline{\mf{B}}_{\epsilon_0}^{\mf{F}}(\mc{W}(r))$ and $\Sigma=\Sigma_1$ (recall that $\mf{F}(|\Phi_i|(X), K)>0$ for all $i\geq i_0$ by Lemma \ref{lemma:index:empty}), we can find $\epsilon_1>0$, $i_1\in \mb{N}$, and $\{\Phi_i^1\}_{i\in \mb{N}}$ so that 
\begin{itemize}
\item $\Phi_i^1$ is homotopic to $\Phi_i$ in the $\mf{F}$-topology for all $i\in \mb{N}$;
\item $\mf{L}(\{\Phi_i^1\}_i)\leq L$;
\item  $\mf{F}(|\Phi_i^1|(X), \overline{\mf{B}}_{\epsilon_1}^{\mf{F}}(\Sigma_1)\cup \overline{\mf{B}}_{\epsilon_0}^{\mf{F}}(\mc{W}(r)))>0$ for all $i\geq i_1$;
\item  no $\Sigma_j$ belongs to $\partial \overline{\mf{B}}_{\epsilon_1}^{\mf{F}}(\Sigma_1)$.(This can be easily satisfied since $\{\Sigma_1,\Sigma_2,\cdots\}$ is a countable set.)
\end{itemize}

Next, we consider $\Sigma_2$. If $\Sigma_2\notin \overline{\mf{B}}_{\epsilon_1}^{\mf{F}}(\Sigma_1)$, then we apply Theorem \ref{thm:deform} again with $K=\overline{\mf{B}}_{\epsilon_1}^{\mf{F}}(\Sigma_1)\cup \overline{\mf{B}}_{\epsilon_0}^{\mf{F}}(\mc{W}(r))$ and find $\epsilon_2$, $i_2\in \mb{N}$, and $\{\Phi_i^2\}_{i\in \mb{N}}$ so that 
\begin{itemize}
\item $\Phi_i^2$ is homotopic to $\Phi_i$ in the $\mf{F}$-topology for all $i\in \mb{N}$;
\item $\mf{L}(\{\Phi_i^2\}_i)\leq L$;
\item  $\mf{F}(|\Phi_i^2|(X), \overline{\mf{B}}_{\epsilon_2}^{\mf{F}}(\Sigma_2)\cup \overline{\mf{B}}_{\epsilon_1}^{\mf{F}}(\Sigma_1)\cup \overline{\mf{B}}_{\epsilon_0}^{\mf{F}}(\mc{W}(r)))>0$ for all $i\geq i_2$;
\item  no $\Sigma_j$ belongs to $\partial \overline{\mf{B}}_{\epsilon_1}^{\mf{F}}(\Sigma_1)\cup \partial \overline{\mf{B}}_{\epsilon_2}^{\mf{F}}(\Sigma_2)$. 
\end{itemize}
If $\Sigma_2\in \mf{B}_{\epsilon_1}^{\mf{F}}(\Sigma_1)$, we skip $\Sigma_2$ and repeat the procedure with $\Sigma_3$. 

We keep applying the above procedure and eventually there are two possibilities. The first case is that we can find for all $l\in \mb{N}$, there exists a sequence $\{\Phi_i^l\}_{i\in \mb{N}}$, $\epsilon_l$, $i_l\in \mb{N}$, and $\Sigma_{j_l}\in \mc{W}^{k+1}\setminus  \overline{\mf{B}}_{\epsilon_0}^{\mf{F}}(\mc{W}(r))$ for some $j_l\in \mb{N}$ so that 
\begin{enumerate}[label=(\roman*)]
\item $\Phi_i^l$ is homotopic to $\Phi_i$ in the $\mf{F}$-topology for all $i\in \mb{N}$;
\item $\mf{L}(\{\Phi_i^l\}_i)\leq L$;
\item  $\mf{F}(|\Phi_i^l|(X), \cup_{q=1}^l \overline{\mf{B}}_{\epsilon_q}^{\mf{F}}(\Sigma_{j_q})\cup  \overline{\mf{B}}_{\epsilon_0}^{\mf{F}}(\mc{W}(r)))>0$ for all $i\geq i_l$;
\item $\{\Sigma_1,\ldots,\Sigma_l\}\subset \cup_{q=1}^l \mf{B}_{\epsilon_q}^{\mf{F}}(\Sigma_{j_q}) $;
\item  no $\Sigma_j$ belongs to $\partial \overline{\mf{B}}_{\epsilon_1}^{\mf{F}}(\Sigma_1)\cup \cdots\cup  \partial \overline{\mf{B}}_{\epsilon_l}^{\mf{F}}(\Sigma_{j_l})$. 
\end{enumerate}
The second case is that the process stops in finitely many steps. This means that we can find some $m\in \mb{N}$, a sequence $\{\Phi_i^m\}_{i\in \mb{N}}$, $\epsilon_1,\ldots,\epsilon_m>0$, $i_m\in \mb{N}$, and $\Sigma_{j_1},\ldots,\Sigma_{j_m}\in \mc{W}^{k+1}\setminus  \overline{\mf{B}}_{\epsilon_0}^{\mf{F}}(\mc{W}(r))$ so that 
\begin{enumerate}[label=(\alph*)]
\item $\Phi_i^m$ is homotopic to $\Phi_i$ in the $\mf{F}$-topology for all $i\in \mb{N}$;
\item $\mf{L}(\{\Phi_i^m\}_i)\leq L$;
\item $\mf{F}(|\Phi_i^m|(X), \cup_{q=1}^m \overline{\mf{B}}_{\epsilon_q}^{\mf{F}}(\Sigma_{j_q})\cup  \overline{\mf{B}}_{\epsilon_0}^{\mf{F}}(\mc{W}(r)))>0$ for all $i\geq i_m$;
\item $\{\Sigma_j:j\geq 1\}\subset \cup_{q=1}^m \mf{B}_{\epsilon_q}^{\mf{F}}(\Sigma_{j_q}) $.
\end{enumerate}

For either case, we will choose a min-max sequence so that we can apply the Min-max Theorem (Theorem \ref{thm:minmax}). For the first case, we can choose a diagonal sequence $\{\Phi_{p_l}^l\}_{l\in \mb{N}}$ and set $\Psi_l=\Phi_{p_l}^l$, where $\{p_l\}_{l\in \mb{N}}$ is an increasing sequence such that $p_l\geq i_l$ (the condition (iii) is satisfied) and \[\sup_{x\in X}\|\Phi_{p_l}^l (x)\|(M)\leq L+\frac{1}{l}.\]
For the second case, we simply choose the last sequence $\{\Phi_l^m\}_l$  and set $p_l=l$ and $\Psi_l=\Phi_l^m$. Now it is easy to see that for both cases, the new sequence $\{\Psi_l\}_{l\in \mb{N}}$ satisfies the following conditions:
\begin{enumerate}
\item $\Psi_l$ is homotopic to $\Phi_{p_l}$ in the $\mf{F}$-topology for all $l\in \mb{N}$;
\item $\mf{L}(\{\Psi_l\}_l)\leq L$;
\item $\mf{C}(\{\Psi_l\}_l)\cap \big(\mc{W}^{k+1}\cup \mc{W}(r)\big)=\emptyset$.
\end{enumerate}
Note that the condition (3) follows directly from (iii) or (c). Then Theorem \ref{thm:minmax} implies that there exists a  varifold  $V\in \mf{C}(\{\Psi_l\}_l)$ such that $V$ is stationary in $M$ with free boundary, and  $V$ is supported on a smooth compact embedded free boundary minimal hypersurface. By (3), we know that $\mc{W}\setminus \big(\mc{W}^{k+1}\cup \mc{W}(r)\big)$ is non-empty and this completes the proof of index estimates for bumpy metrics.
\end{proof}

Now we are ready to prove the general index estimates using Theorem \ref{thm:index:bumpy} and the compactness result of \cite{GWZ18}. 

\begin{theorem}[Index estimates for general metrics]
\label{thm:index estimates}
Suppose that $(M^{n+1},g)$ is a smooth compact manifold with boundary and $3\leq (n+1)\leq 7$. Let $X$ be a cubical complex of dimensional $k$ and $\Phi:X\to \mc{Z}_n(M,\partial M;\mf{F};G)$ be a continuous map. Let $\Pi$  denote the associated homotopy class of $\Phi$.  Then there exists a varifold $V\in \mc{V}_n(M)$ such that 
\begin{enumerate}[label=(\roman*)]
\item $\|V\|(M)=\mf{L}(\Pi)$;
\item $V$ is stationary in $M$ with free boundary;
\item there exists $N\in \mb{N}$ and $m_i\in \mb{N}$, $1\leq i\leq N$, such that $V=\sum_{i=1}^N m_i|\Sigma_i|$, where each $\Sigma_i$ is a smooth, compact, connected, almost properly embedded, free boundary minimal hypersurface in $M$. Moreover, 
\[ \Index(\spt V)=\sum_{i=1}^N \Index(\Sigma_i)\leq k.\]
\end{enumerate}
\end{theorem}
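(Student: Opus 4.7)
The plan is to reduce Theorem \ref{thm:index estimates} to the bumpy case (Theorem \ref{thm:index:bumpy}) via a density/compactness argument, using the fact that bumpy metrics are $C^\infty$-generic (White \cite{Whi17}, Ambrozio-Carlotto-Sharp \cite{ACS17}) together with the compactness theorem of \cite{GWZ18} for almost properly embedded free boundary minimal hypersurfaces with uniformly bounded area and index.

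First I would fix a sequence $\{g_j\}_{j\in\mb N}$ of bumpy metrics on $M$ with $g_j\to g$ in the $C^\infty$ topology. The underlying space of relative cycles $\mc Z_n(M,\partial M;\mf F;G)$ does not depend on the metric, so the map $\Phi$ and its flat homotopy class $\Pi$ are defined uniformly; only the mass functional and the $\mf F$-topology depend on $g_j$. A standard comparison using $|1-dv_{g_j}/dv_g|\to 0$ uniformly shows that the $g_j$-widths $\mf L_{g_j}(\Pi)$ converge to $\mf L_g(\Pi)$. Applying Theorem \ref{thm:index:bumpy} to each $(M,g_j)$, I obtain a varifold $V_j\in \mc V_n(M)$ which is stationary for $g_j$ with free boundary, with $\|V_j\|_{g_j}(M)=\mf L_{g_j}(\Pi)$, and whose support is a finite union of smooth, compact, almost properly embedded $g_j$-FBMHs $\{\Sigma_i^{(j)}\}_{i=1}^{N_j}$ satisfying $\sum_{i=1}^{N_j}\Index_{g_j}(\Sigma_i^{(j)})\leq k$.

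Next I would pass to the limit. Because $\|V_j\|_{g_j}(M)$ is bounded uniformly by $\mf L_g(\Pi)+o(1)$, the masses $\|V_j\|_g(M)$ are also uniformly bounded, so after passing to a subsequence $V_j\rightharpoonup V$ in the varifold sense. The limit $V$ satisfies $\|V\|_g(M)=\mf L_g(\Pi)$ (by lower semicontinuity of mass together with the converse inequality coming from the min-max construction applied in the metric $g$, or alternatively by noting the masses themselves converge), and $V$ is stationary in $(M,g)$ with free boundary as a limit of stationary varifolds whose ambient metric varies smoothly. The number of components $N_j$ is bounded (by $k$ or by the index bound combined with an area bound), so after further extraction $N_j\equiv N$ is constant; then the compactness theorem \cite[Theorem 1.1]{GWZ18} applies to each sequence $\{\Sigma_i^{(j)}\}_j$, yielding smooth, compact, almost properly embedded $g$-FBMHs $\Sigma_i$ such that $\Sigma_i^{(j)}\to \Sigma_i$ smoothly (with some multiplicity $m_i\in\mb N$) away from finitely many points. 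Thus $V=\sum_{i=1}^N m_i|\Sigma_i|$ with each $\Sigma_i$ as required.

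The main obstacle is the index bound in the limit, which has to survive both the metric degeneration $g_j\to g$ and the possible multiplicity/boundary degenerations in the compactness. This is precisely where \cite{GWZ18} is essential: their compactness theorem was stated so that the index on the proper part (which is how we defined $\Index(\Sigma_i)$ in Section \ref{S:main}) is upper semicontinuous under smooth convergence with multiplicity. Consequently
\[
\sum_{i=1}^N \Index_g(\Sigma_i) \le \liminf_{j\to\infty}\sum_{i=1}^N \Index_{g_j}(\Sigma_i^{(j)}) \le k,
\]
which gives the desired bound. Finally, the identity $\Index(\spt V)=\sum_i\Index(\Sigma_i)$ in the statement is just the definition of index on the support from Section \ref{S:main}, so the proof is complete once the three items (i)--(iii) are verified. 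I would take care to argue that $V\ne 0$ (so that $\mf L(\Pi)>0$ implies a genuine hypersurface appears) using $\|V\|_g(M)=\mf L_g(\Pi)$, and to ensure that the subsequence extraction is compatible across all the steps above by a standard diagonal argument.
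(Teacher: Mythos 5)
Your proposal is correct and follows essentially the same route as the paper: fix a sequence of bumpy metrics $g_j\to g$, apply Theorem \ref{thm:index:bumpy} to each $(M,g_j)$, use continuity of the width in the metric, and pass to the limit via the compactness theorem of \cite{GWZ18} with upper semicontinuity of the index on the proper part. The paper states this more tersely, but the content is the same.
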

\begin{proof}
Since bumpy metrics are generic in the Baire sense \cite{ACS17}, we can take a sequence $\{g_j\}_{j\in \mb{N}}$ of bumpy metrics converging smoothly to $g$. For each $g_j$, we use $L_j$ to denote the width of $\Pi$ with respect to $g_j$. By Theorem \ref{thm:index:bumpy}, we know that there exists a varifold $V_j\in \mc{V}_n(M)$ which is stationary in $M$ with free boundary and whose support is a smooth, compact, almost properly embedded, free boundary minimal hypersurface. Moreover, 
\[L_j=\|V_j\|(M)\quad \text{and} \quad \Index(\spt V_j)\leq k.\]   
Since the width is continuous with respect to metrics, we know that $L_j\to \mf{L}(\Pi)$ as $j\to \infty$. The conclusion then follows directly from the compactness theorem in \cite{GWZ18}. 
\end{proof}

\section{Density of free boundary minimal hypersurfaces}
\label{sec:density}

In the final section, we give a proof of the density result (Theorem \ref{T:density}). We shall need the notions of $p$-widths for compact Riemannian manifolds (with or without boundary). We will first recall the definitions and state the relevant results in the free boundary setting. 

\subsection{Width}

Let $X$ denote a cubical subcomplex of the $m$-dimensional cube $I^m=[0,1]^m$. 

\begin{definition}
\label{D:sweepout}
Given $p\in \mb N$, a continuous map in
the flat topology
\[\Phi : X \rightarrow \mc Z_{n} (M, \partial M;\mb Z_2)\]
is called a $p$-sweepout if the $p$-th cup power of $\lambda = \Phi^*(\bar{\lambda})$ is non-zero in $H^{p}(X; \mb Z_2 )$ where $0 \neq \bar{\lambda} \in H^1(\mc Z_{n} (M, \partial M;\mb Z_2);\mb Z_2) \cong \mb Z_2$. We denote by $\mc P_p(M)$ the set of all $p$-sweepouts that are continuous in the flat topology and have no concentration of mass (\cite[\S 3.7]{MN17}).
\end{definition}

\begin{definition}
The {\em $p$-width} of a Riemannian manifold $(M,g)$ with boundary is defined by
\[ \omega_p(M,g):=\inf_{\Phi\in\mc P_p (M)}\sup\{\mf M(\Phi(x)) :x\in  \mathrm{dmn}(\Phi)\},\]
where $\mathrm{dmn}(\Phi)$ is the domain of $\Phi$.
\end{definition}

The following proposition says that the $p$-width $\omega_p$ is realized by the area (counting multiplicities) of min-max free boundary minimal hypersurfaces, which is an application of our Min-max Theorem (Theorem \ref{thm:minmax}) and general Morse index estimates (Theorem \ref{thm:index estimates}), together with the compactness result of \cite[Theorem 1.1]{GWZ18}.

\begin{proposition}[cf.{\cite[Proposition 2.2]{IMN17}}]
\label{P:width}
Suppose $3\leq (n + 1)\leq 7$. Then for each $k\in\mb N$, there exist a finite disjoint collection $\{\Sigma_1,...,\Sigma_N\}$ of smooth, compact, almost properly embedded FBMHs in $(M,\partial M;g)$, and integers $\{m_1,...,m_N\}\subset\mb N$, such that
\[\omega_k(M,g)=\sum_{j=1}^Nm_j\area_g(\Sigma_j)\ \ \text{ and }\ \ \sum_{j=1}^N \mathrm{index}(\Sigma_j)\leq k.\]
\end{proposition}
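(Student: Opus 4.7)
The plan is to mimic the argument of \cite[Proposition 2.2]{IMN17} in the free boundary setting, combining the Min-max Theorem (Theorem \ref{thm:minmax}), the general index estimates (Theorem \ref{thm:index estimates}), and the compactness result of \cite[Theorem 1.1]{GWZ18}. The proposition is essentially an ``output packaging'' result, so most of the work has already been done in the main body of the paper.

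First I would reduce to working with $\mathbf{F}$-continuous sweepouts. By definition $\Phi \in \mathcal{P}_k(M)$ is only continuous in the flat topology and has no concentration of mass, whereas Theorem \ref{thm:index estimates} requires a map continuous in the $\mathbf{F}$-topology. Given any $\Phi \in \mathcal{P}_k(M)$ and any $\epsilon > 0$, applying the Discretization Theorem \cite[Theorem 4.12]{LZ16} to $\Phi$ followed by the Interpolation Theorem (Theorem \ref{thm:interpolation}) yields an $\mathbf{M}$-continuous (hence $\mathbf{F}$-continuous) map $\widetilde\Phi : X \to \mathcal{Z}_n(M, \partial M; \mathbf{F}; \mathbb{Z}_2)$, flat-homotopic to $\Phi$, with $\sup_x \mathbf{M}(\widetilde\Phi(x)) \leq \sup_x \mathbf{M}(\Phi(x)) + \epsilon$. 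Since the cup-product condition in Definition \ref{D:sweepout} is invariant under flat homotopy, $\widetilde\Phi$ is again a $k$-sweepout. Thus $\omega_k(M,g)$ is unchanged if the infimum is taken only over $\mathbf{F}$-continuous $k$-sweepouts.

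Next I would select a minimizing sequence $\{\Phi_i\}$ of $\mathbf{F}$-continuous $k$-sweepouts with $\sup_{x\in X_i} \mathbf{M}(\Phi_i(x)) \to \omega_k(M,g)$, and let $\Pi_i$ denote the $\mathbf{F}$-homotopy class of $\Phi_i$ (in the flat topology, as in Section \ref{sec:min-max theory}). Because every element of $\Pi_i$ is itself a $k$-sweepout, one has the sandwich
\[\omega_k(M,g) \;\leq\; \mathbf{L}(\Pi_i) \;\leq\; \sup_{x\in X_i} \mathbf{M}(\Phi_i(x)),\]
so $\mathbf{L}(\Pi_i) \to \omega_k(M,g)$. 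Applying Theorem \ref{thm:index estimates} to each $\Pi_i$ produces a stationary integral varifold $V_i = \sum_{j=1}^{N_i} m_{i,j}\,|\Sigma_{i,j}|$ with free boundary such that $\|V_i\|(M) = \mathbf{L}(\Pi_i)$ and $\sum_j \mathrm{Index}(\Sigma_{i,j}) \leq k$.

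Finally I would invoke the compactness result \cite[Theorem 1.1]{GWZ18}: the sequence $\{V_i\}$ has uniformly bounded mass (converging to $\omega_k(M,g)$) and uniformly bounded total index, so after passing to a subsequence it converges in the varifold sense to a limit $V = \sum_{j=1}^N m_j |\Sigma_j|$ that is still stationary with free boundary, supported on a finite disjoint collection of smooth, compact, almost properly embedded FBMHs $\Sigma_j$ with integer multiplicities $m_j$, and the total index remains bounded by $k$ (by lower semicontinuity of the index under such convergence, cf. \cite{GWZ18}). Continuity of $\|\cdot\|(M)$ under $\mathbf{F}$-convergence gives $\|V\|(M) = \omega_k(M,g)$, completing the proof. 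The main subtlety is the reduction to $\mathbf{F}$-continuous sweepouts in the first step, which requires verifying that the Almgren extension stays inside the original flat-homotopy class so that the cup-product condition defining a $k$-sweepout is preserved; all remaining steps are direct applications of the theorems already established.
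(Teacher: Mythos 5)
Your proposal is correct and follows essentially the same strategy as the paper's proof: pass to $\mathbf{F}$-continuous sweepouts via the Discretization and Interpolation Theorems, use the sandwich inequality to show $\mathbf{L}(\Pi_i)\to\omega_k(M,g)$, apply Theorem \ref{thm:index estimates} to each homotopy class, and conclude with the compactness theorem from \cite{GWZ18}. The only notational difference is that the paper explicitly remarks that the areas of nontrivial FBMHs are uniformly bounded below, which gives the uniform bound on the number of components $N_i$ and multiplicities $m_{i,j}$ needed before invoking compactness; you fold this into the compactness citation, which is acceptable.
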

\begin{proof}
Choose a sequence $\{\Phi_i\}_{i\in\mb N}\subset \mc P_k(M)$ such that
\begin{equation}\label{eq:sequence of sweepouts realize width}
\lim_{i\rightarrow\infty} \sup\{\mf M(\Phi_i(x)):x\in X_i = \mathrm{dmn}(\Phi_i)\}=\omega_k(M, g).
\end{equation}
Without loss of generality, we can assume that the dimension of $X_i$ is $k$ for all $i$ (see \cite[\S 1.5]{MN16} or \cite[Proof of Proposition 2.2]{IMN17}). 

By the Discretization Theorem \cite[Theorem 4.12]{LZ16} and the Interpolation Theorem (Theorem \ref{thm:interpolation}), we can assume that $\Phi_i$ is a continuous map to $\mc Z_n(M,\partial M;\mb Z_2)$ in the $\mf F$-metric. Denote by $\Pi_i$ the homotopy class of $\Phi_i$. This is the class of all maps $\Phi'_ i: X_i\rightarrow\mc Z_n (M,\partial M;\mb Z_2)$, continuous in the $\mf F$-metric, that are homotopic to $\Phi_i$ in the flat topology.  In particular, ${\Phi'_i}^*(\bar\lambda)=\Phi_i^*(\bar\lambda)$. 
Continuity in the $\mf F$-metric implies no concentration of mass (see Lemma \ref{lemma:no mass}), hence every such $\Phi'_i$ is also a $k$-sweepout in the sense of Definition \ref{D:sweepout}.

\begin{claim}
\label{claim:wk achieved}
$\lim_{i\rightarrow \infty}\mf L(\Pi_i)=\omega_k(M,g)$.
\end{claim}
\begin{proof}[Proof of Claim \ref{claim:wk achieved}]
Note that
\begin{equation}\label{eq:upper bound of LPi}
\mf L(\Pi_i)\leq \sup\{\mf M(\Phi_i(x)):x\in X_i\}.
\end{equation}
Letting $i\rightarrow\infty$, the right hand side tends to $\omega_k(M,g)$ by (\ref{eq:sequence of sweepouts realize width}). On the other hand, since that each element in $\Pi_i$ is also a $k$-sweepout, then
\[\omega_k(M,g)\leq \inf_{\Phi'\in\Pi_i} \sup\{\mf M(\Phi'(x)):x\in X_i\}=\mf L(\Pi_i).\]
Together with (\ref{eq:upper bound of LPi}), the desired result follows.
\end{proof}

To proceed with the arguments, Theorem \ref{thm:index estimates} implies the existence of a finite disjoint collection $\{\Sigma_{i,1} ,...,\Sigma_{i,N_i}\}$ of almost properly embedded FBMHs in $(M,\partial M)$, and a sequence of integers $\{m_{i,1},...,m_{i,N_i} \} \subset \mb N$, such that
\[ \mf L(\Pi_i )= \sum_{j=1}^{N_i}m_{i,j}\area_g(\Sigma_{i,j}),\ \ \text{ and }\ \ \sum_{j=1}^{N_i}\mathrm{index}(\Sigma_{i,j})\leq k.\]

Note that the areas of non-trivial almost properly embedded FBMHs in $(M,\partial M;g)$ are uniformly bound away from zero. Hence the number of components $N_i$ and the multiplicities $m_{i,j}$ are uniformly bounded from above. The desired results then follow immediately from the compactness theorem \cite[Theorem 1.1]{GWZ18}.
\end{proof}

Since the two formulations of min-max theory for manifolds with boundary were shown to be equivalent in Section \ref{sec:equivalence}, our $p$-widths defined using the space $\mc Z_{n} (M, \partial M;\mb Z_2)$ also satisfy a Weyl Law as in \cite{LMN16}.

\begin{theorem}[Weyl Law for the Volume Spectrum; \cite{LMN16}]
\label{thm:weyl}
There exists a constant $\alpha(n)$ such that, for every compact Riemannian manifold $(M^{n+1},g)$ with (possibly empty) boundary, we have 
\[\lim_{k\to \infty}\omega_k(M,g)k^{-\frac{1}{n+1}}=\alpha(n)\vol(M,g)^{\frac{n}{n+1}}.\]
\end{theorem}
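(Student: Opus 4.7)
The plan is to reduce the statement to the Weyl Law established in \cite{LMN16}, using the equivalence of the two formulations of the space of relative cycles that was proved in Section \ref{sec:equivalence}. Concretely, \cite{LMN16} proves the Weyl Law for $p$-widths $\tilde\omega_p(M,g)$ defined via sweepouts valued in $\mc Z_{n,rel}(M,\partial M;\mb Z_2)$, whereas the $p$-width $\omega_p(M,g)$ in this paper is defined (via Definition \ref{D:sweepout}) using sweepouts into $\mc Z_n(M,\partial M;\mb Z_2)$. So the task reduces to proving that
\[\omega_p(M,g)=\tilde\omega_p(M,g)\qquad \text{for every } p\in\mb N,\]
and then quoting \cite[Weyl Law]{LMN16} with the same universal constant $\alpha(n)$.

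The first step is to unpack the proposition from Section \ref{sec:equivalence}: the maps $\iota$ and $\eta=\iota^{-1}$ between $\mc Z_{n,rel}(M,\partial M;\mb Z_2)$ and $\mc Z_n(M,\partial M;\mb Z_2)$ are mutually inverse and are simultaneously isometries for $\mc F$ and $\mf M$. In particular both are homeomorphisms in the flat topology, so $\iota$ induces an isomorphism on singular cohomology with $\mb Z_2$ coefficients. Since $H^1(\cdot;\mb Z_2)\cong\mb Z_2$ on both sides, $\iota^*$ sends the unique non-zero class $\bar\lambda$ used in Definition \ref{D:sweepout} to the corresponding non-zero generator $\bar\lambda_{rel}$ used in \cite{LMN16}. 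Hence for any flat-continuous $\Phi:X\to \mc Z_n(M,\partial M;\mb Z_2)$, one has $(\eta\circ\Phi)^*\bar\lambda_{rel}=\Phi^*(\iota^*\bar\lambda_{rel})=\Phi^*\bar\lambda$, so $\Phi$ is a $p$-sweepout in our sense if and only if $\eta\circ\Phi$ is a $p$-sweepout in the sense of \cite{LMN16}. Mass-preservation of $\iota$ and $\eta$ on equivalence classes gives $\sup_{x\in X}\mf M(\Phi(x))=\sup_{x\in X}\mf M((\eta\circ\Phi)(x))$, and the property of having no concentration of mass is likewise preserved since $\iota$ and $\eta$ transport weak-star mass bounds of the canonical representatives. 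Taking infima over the two corresponding admissible classes yields $\omega_p(M,g)=\tilde\omega_p(M,g)$, as required.

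The main obstacle is really the bookkeeping in the previous paragraph: once one is confident that $\iota$ is a flat-topological homeomorphism, the identification of the generators of $H^1(\cdot;\mb Z_2)\cong\mb Z_2$ is forced (the nonzero class must go to the nonzero class, as $\iota^*$ is an isomorphism). A potentially delicate check is that our Discretization/Interpolation pipeline and the one in \cite{LMN16} produce admissible $p$-sweepouts that correspond under $\iota$; this follows because $\iota$ is isometric for both $\mf M$ and $\mc F$, so fineness, mass, and flat continuity are all simultaneously preserved. Once these points are verified, no new geometric analysis is needed beyond the isometry from Section \ref{sec:equivalence} and the Weyl Law of \cite{LMN16}, and the constant $\alpha(n)$ is literally the one from \cite{LMN16}.
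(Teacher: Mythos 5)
Your proposal is correct and takes essentially the same approach as the paper: the paper's entire justification for Theorem \ref{thm:weyl} is a one-sentence remark that, by the equivalence of formulations established in Section \ref{sec:equivalence}, the Weyl Law of \cite{LMN16} transfers to the $p$-widths defined via $\mc Z_n(M,\partial M;\mb Z_2)$, and you are simply spelling out why that transfer works (the isometry $\iota$ is a flat homeomorphism, so it identifies $H^1(\cdot;\mb Z_2)\cong\mb Z_2$ on both sides, carries $p$-sweepouts to $p$-sweepouts, and preserves $\mf M$ and hence the widths). One small notational slip: in the line $(\eta\circ\Phi)^*\bar\lambda_{rel}=\Phi^*(\iota^*\bar\lambda_{rel})$ you should have written $\eta^*$ rather than $\iota^*$, but since both induce the unique automorphism of $\mb Z_2$ (the identity) this does not affect the conclusion.
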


It is known that the normalized $p$-width is a locally Lipschitz function of the metric, with a uniform local Lipschitz constant independent of $p$.

\begin{lemma}[{\cite[Lemma 2.1]{IMN17},\cite[Lemma 1]{MNS17}}]
Let $g_0$ be a $C^2$ Riemannian metric on $(M,\partial M)$, and let $C_1 < C_2$ be positive constants. Then there exists $K=K( g_0,C_1,C_2) > 0$ such that
\[
|p^{-\frac{1}{n+1}}\omega_p (M,g) -p^{-\frac{1}{n+1}}\omega_p (M, g')|\leq K \cdot |g- g'|_{g_0} \]
for all $C^2$ metrics $g, g'$ such that $C_1g_0\leq g \leq C_2 g_0$, $C_1g_0\leq g' \leq C_2 g_0$ and any $p\in\mb N$.
\end{lemma}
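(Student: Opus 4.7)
The plan is to establish the Lipschitz estimate by first proving a multiplicative comparison of mass under nearby metrics, and then invoking an upper bound on $\omega_p$ that is uniform in $p$ (with the correct growth rate $p^{1/(n+1)}$).

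First I would compare the mass functionals. For any $n$-rectifiable current $T$ in $M$, the mass in metric $g$ is computed by integrating the $n$-dimensional Jacobian of the identity map with respect to the Gram determinant of $g$ restricted to the approximate tangent plane of $T$. An elementary linear algebra computation gives a pointwise bound
\[ (1 - C_0|g-g'|_{g_0})\, \mf M_{g'}(T) \leq \mf M_g(T) \leq (1 + C_0|g-g'|_{g_0})\, \mf M_{g'}(T),\]
where $C_0 = C_0(g_0,C_1,C_2)$ depends only on the background metric and the ellipticity bounds $C_1 g_0 \leq g,g'\leq C_2 g_0$. The same comparison passes to the equivalence classes in $\mc Z_n(M,\partial M;\mb Z_2)$ since the canonical representative is purely geometric and does not depend on the metric (only the mass value does). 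Taking the infimum over $p$-sweepouts $\Phi \in \mc P_p(M)$ (note the class $\mc P_p(M)$ is defined metric-independently since continuity is in the flat topology and the cup-power condition is topological), we obtain
\[ \omega_p(M,g) \leq (1 + C_0|g-g'|_{g_0})\, \omega_p(M,g'), \]
and the symmetric inequality. Rearranging yields
\[ |\omega_p(M,g) - \omega_p(M,g')| \leq C_0\, |g-g'|_{g_0}\, \max\{\omega_p(M,g),\omega_p(M,g')\}. \]

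Second, I would control $\omega_p$ uniformly in $p$ via its asymptotic growth. By Theorem \ref{thm:weyl}, $p^{-1/(n+1)}\omega_p(M,g)\to \alpha(n)\vol(M,g)^{n/(n+1)}$ as $p\to\infty$, so in particular the sequence $\{p^{-1/(n+1)}\omega_p(M,g)\}_{p\in\mb N}$ is bounded by some constant $K_1 = K_1(g)$. The ellipticity hypothesis $C_1 g_0 \leq g \leq C_2 g_0$ together with the multiplicative comparison above shows that $K_1$ can in fact be chosen independent of $g$ within this range, giving $p^{-1/(n+1)}\omega_p(M,g) \leq K_1(g_0,C_1,C_2)$ for all $p\in\mb N$ and all admissible $g$.

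Combining the two steps, multiplying the first inequality by $p^{-1/(n+1)}$ and substituting the second, we obtain
\[ \left| p^{-\frac{1}{n+1}} \omega_p(M,g) - p^{-\frac{1}{n+1}}\omega_p(M,g') \right| \leq C_0 K_1\, |g-g'|_{g_0}, \]
which is the claim with $K := C_0 K_1$. The main technical point is ensuring that the Weyl-Law-based bound $K_1$ can truly be taken uniform over the ellipticity class $\{C_1 g_0 \leq g \leq C_2 g_0\}$ and uniform in $p$; this is precisely where one needs the asymptotic rate $p^{1/(n+1)}$ from Theorem \ref{thm:weyl}, rather than just a $g$-dependent bound for each fixed $p$, in order to produce a single $K$ working for all $p$ simultaneously.
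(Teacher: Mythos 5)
Your proof is correct and matches, essentially verbatim, the argument given in the cited references \cite{IMN17} (Lemma 2.1) and \cite{MNS17} (Lemma 1); the paper under review does not re-prove this lemma but simply cites it. Those references derive the multiplicative comparison of mass under elliptically comparable metrics exactly as you do, and then invoke the sublinear growth $\omega_p(M,g)\le C(g_0,C_1,C_2)\,p^{1/(n+1)}$ coming from \cite{LMN16}; your invocation of the full Weyl Law (Theorem \ref{thm:weyl}) is slightly more than is needed, since the upper bound of the correct order in $p$ suffices, but it is a valid way to obtain the same uniform constant $K_1(g_0,C_1,C_2)$ once you observe, as you do, that the ellipticity hypothesis lets one transfer the bound from the fixed reference metric $g_0$ to the whole class.
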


\subsection{Perturbation results}

Given a Riemannian manifold $(M,\partial M;g)$ with boundary and an almost properly embedded free boundary minimal hypersurface $\Sigma \subset M$, in general $\Sigma$ could be degenerate and improper. We first prove that under a smooth perturbation of $g$, we can make $\Sigma$ non-degenerate.

\begin{proposition}(cf.\cite[Proposition 2.3]{IMN17}, \cite[Lemma 4]{MNS17})
\label{prop:free:non-degenerate}
Let $\Sigma$ be a compact, smooth almost properly embedded FBMH in $(M,\partial M;g)$. Then there exists a sequence of metrics $g_i$ on $M$, $i \in \mb N$, converging to $g$ smoothly such that $\Sigma$ is a non-degenerate, almost properly embedded FBMH in $(M,\partial M;g_i)$ for each $i \in \mb N$.
\end{proposition}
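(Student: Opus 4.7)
The plan is to adapt the closed-case argument of \cite[Proposition 2.3]{IMN17} and \cite[Lemma 4]{MNS17} to the free-boundary setting: I will construct a finite-parameter family of localized perturbations of $g$ that keep $\Sigma$ an almost properly embedded FBMH but alter the Jacobi operator transversely, so that for a generic choice of small parameters the kernel becomes trivial. A sequence of such parameters tending to $0$ will then yield the required $g_i \to g$.

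First, I would introduce Fermi coordinates $(x,s)$ in a tubular neighborhood $U$ of $\Sigma\setminus\mathrm{Touch}(\Sigma)$, with $s$ the signed distance to $\Sigma$. Near the free boundary portion $\partial\Sigma$ the coordinates can be adapted so that the $s$-curves based at points of $\partial\Sigma$ remain inside $\partial M$, which ensures that any perturbation of $g$ that does not change the metric tangentially along $\partial M$ will preserve both $\partial M$ itself and the orthogonal meeting of $\Sigma$ with $\partial M$. I would then pick smooth functions $f_1,\ldots,f_d$ on $\Sigma$ compactly supported in $\Sigma\setminus\mathrm{Touch}(\Sigma)$ and consider
\[
g_t \;=\; g \;+\; \sum_{j=1}^{d} t_j\, f_j(x)\,\psi(s)\,s^{2}\,ds\otimes ds,
\]
where $\psi$ is a smooth cutoff supported in $U$ with $\psi\equiv 1$ near $s=0$. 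Each summand and its first $s$-derivative vanish on $\Sigma$, so the induced metric on $\Sigma$, the second fundamental form of $\Sigma$, the touching set, and the angle at which $\Sigma$ meets $\partial M$ are all unchanged for every small $t=(t_1,\ldots,t_d)$; hence $\Sigma$ remains an almost properly embedded FBMH in $(M,g_t)$.

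Second, I would compute the change of the Jacobi operator under these perturbations. Since only $\partial_s^{2}g|_{s=0}$ is modified and only in the $ds\otimes ds$ component, the perturbation affects only the $\mathrm{Ric}_M(\nu,\nu)$ term along $\Sigma$, adding a zeroth-order potential $-c\sum_j t_j f_j$ to $L_\Sigma$ up to $O(|t|^{2})$, for some nonzero constant $c$. Let $\phi_1,\ldots,\phi_d$ be an $L^2$-orthonormal basis of $\ker L_\Sigma$ acting on the space of sections of $N\Sigma$ vanishing on $\mathrm{Touch}(\Sigma)$ with the Robin condition on $\partial\Sigma$ (this kernel is finite-dimensional by Fredholm theory, and a non-trivial kernel is precisely the degeneracy to be destroyed). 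Standard first-order perturbation theory for self-adjoint elliptic operators expresses the linearization at $t=0$ of the near-zero eigenvalues of $L_t$ in terms of the $d\times d$ symmetric matrices $\bigl(\int_\Sigma f_j\,\phi_k\phi_\ell\,d\mu_\Sigma\bigr)_{k,\ell}$. Because each $\phi_k$ satisfies a unique-continuation property on the open manifold $\Sigma\setminus\mathrm{Touch}(\Sigma)$, one can choose bump functions $f_j$ so that a linear combination (over $j$) of these matrices is invertible, whence the map $t\mapsto\{\text{near-zero eigenvalues of }L_t\}$ is a submersion at $t=0$. Consequently, $\{t:\ker L_t\neq 0\}$ has empty interior near the origin, and selecting any sequence $t^{(i)}\to 0$ avoiding this set produces the desired metrics $g_i:=g_{t^{(i)}}$.

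The principal obstacle will be the interplay between the touching set and the perturbation: Fermi coordinates degenerate along $\mathrm{Touch}(\Sigma)$, so the perturbations $f_j$ must be supported in $\Sigma\setminus\mathrm{Touch}(\Sigma)$. This is, however, fully compatible with the definition of Morse index in Section~\ref{S:main}, in which admissible variations already vanish on $\mathrm{Touch}(\Sigma)$: the kernel elements $\phi_k$ satisfy a Dirichlet-type condition there, and unique continuation is only needed on $\Sigma\setminus\mathrm{Touch}(\Sigma)$. Once this technical point and the adapted Fermi coordinates near $\partial\Sigma$ are in place, the transversality argument goes through as in the closed case.
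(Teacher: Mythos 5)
The paper's own proof is three sentences: it splits into two cases depending on whether $\Sigma$ coincides with a union of components of $\partial M$, and in either case directly invokes the closed-case results \cite[Proposition~2.3]{IMN17} and \cite[Lemma~4]{MNS17}, noting only that the locally supported conformal perturbations used there can be centered at \emph{interior} points of $M$, so that $\partial M$ and the free boundary condition are untouched. Your proposal reconstructs the argument from scratch; while this is not objectionable per se, it contains a genuine computational error that invalidates the central step.

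\textbf{The perturbation does not move the Jacobi operator.} You perturb only the normal-normal component of the metric: $\delta g = f(x)\psi(s)\,s^2\, ds\otimes ds$. Writing $g_t = e^{2\phi}\,ds^2 + h_{ij}\,dx^i dx^j$ with $\phi = \tfrac12\log\!\big(1+t f\psi s^2\big)$, a direct Christoffel computation (or the ADM/Riccati formula with lapse $N=e^{\phi}$ and unchanged leaf metric) gives on a flat model
\[
\mathrm{Ric}_{g_t}(\nu,\nu)\big|_{s=0} = -\big(|\nabla_x\phi|^2 + \Delta_x\phi\big)\big|_{s=0} = 0,
\]
since $\phi$ and all its tangential derivatives vanish to second order in $s$ at $s=0$. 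The same conclusion holds over an arbitrary background: perturbing the lapse alone is, to the order that matters, absorbable by reparametrization in $s$; what changes the curvature at $\Sigma$ is the second $s$-derivative of the \emph{tangential} components $g_{ij}$. In other words, your $\delta g$ adds \emph{no} zeroth-order potential to $L_\Sigma$ to first order in $t$, so the transversality map you propose is the zero map at $t=0$ and the argument collapses. What is actually used in \cite{IMN17,MNS17} is a conformal perturbation $g_t = \big(1 + t f\psi s^2\big)\,g$: this vanishes to second order on $\Sigma$, hence leaves the induced metric, $A^\Sigma$, minimality, and the orthogonality along $\partial\Sigma$ unchanged, but it shifts $\mathrm{Ric}(\nu,\nu)|_\Sigma$ by $-n\,t\,f(x)$, which is what drives the transversality. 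Replacing your $ds\otimes ds$-only ansatz by the conformal one (or by a perturbation of the tangential components $g_{ij}$ at second order in $s$) is necessary for the argument to work.

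\textbf{Two further gaps.} First, the case $\Sigma\subset\partial M$, where $\mathrm{Touch}(\Sigma)=\Sigma$, is not handled: then $\Sigma\setminus\mathrm{Touch}(\Sigma)=\emptyset$, so your supporting set for the $f_j$'s is empty and no perturbation is available. The paper treats this separately by appealing directly to the closed-hypersurface result \cite[Proposition~2.3]{IMN17}. Second, your assertion that the kernel elements $\phi_k$ ``satisfy a Dirichlet-type condition'' on $\mathrm{Touch}(\Sigma)$ is at odds with the paper's own definition of non-degeneracy: the relevant Jacobi fields are explicitly \emph{allowed} to be non-zero on the touching set. This particular misstatement is not fatal once the correct perturbation is used, because unique continuation for the (elliptic, interior) Jacobi equation still guarantees that any non-trivial Jacobi field is non-zero somewhere on $\Sigma\setminus\mathrm{Touch}(\Sigma)$ whenever $\mathrm{Touch}(\Sigma)$ has measure zero; but as stated it suggests you are proving non-degeneracy with respect to a smaller kernel than the one the paper actually needs to kill.

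\textbf{Summary.} The paper's approach is both more economical and more robust: it localizes the existing closed-case conformal perturbations away from $\partial M$ rather than reproving transversality. Your route could be made to work, but only after (a) replacing the $ds\otimes ds$ perturbation by a conformal or tangential one so that $\mathrm{Ric}(\nu,\nu)|_\Sigma$ genuinely changes, (b) treating the case $\Sigma\subset\partial M$ separately, and (c) removing the spurious Dirichlet condition on the touching set.
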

\begin{proof}
If $\Sigma$ is equal to the union of some components of $\partial M$, the result follows from \cite[Proposition 2.3]{IMN17}. Otherwise, the points $\{x_i\}$ in \cite[Lemma 4]{MNS17} can be chosen to lie in the interior of $M$. Therefore, $\Sigma$ is still a FBMH under the locally conformally perturbed metrics. Finally, using the arguments in \cite[Lemma 4]{MNS17}, the perturbations of $g$ therein make $\Sigma$ non-degenerate. 
\end{proof}

Next, we prove that under suitable perturbation of the metrics \emph{and} the hypersurface $\Sigma$, we can make the FBMHs properly embedded.

\begin{proposition}
\label{prop:perturb to be proper}
Let $\Sigma$ be a compact, smooth almost properly embedded FBMH in $(M,\partial M;g)$. 
Then there exist a sequence of metrics $g_i$ on $M$, and a sequence of hypersurfaces $\Sigma_i$ in $M$ so that 
\begin{itemize}
\item $g_i$ converges to $g$ in the smooth topology;
\item $\Sigma_i$ smoothly converges to $\Sigma$;
\item $\Sigma_i$ is a properly embedded FBMH in $(M,\partial M;g_i)$ for each $i$.
\end{itemize}
\end{proposition}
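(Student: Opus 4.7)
The plan is to perturb the metric so as to render $\partial M$ strictly mean convex in a neighborhood of $\mathrm{Touch}(\Sigma)$, after which the implicit function theorem for free boundary minimal hypersurfaces, combined with White's maximum principle, will force nearby minimal hypersurfaces to be properly embedded. If $\mathrm{Touch}(\Sigma)=\emptyset$, one simply takes $g_i:=g$ and $\Sigma_i:=\Sigma$; otherwise, by Proposition~\ref{prop:free:non-degenerate} applied first, we may reduce via a diagonal argument to the case that $\Sigma$ is already non-degenerate in $(M,g)$. Set $T:=\mathrm{Touch}(\Sigma)$, a nonempty closed subset of $\partial M$ disjoint from $\partial\Sigma$, and choose an open neighborhood $V\subset\partial M$ of $T$ whose closure is still disjoint from $\partial\Sigma$. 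Pick a smooth function $\psi:M\to\R$ with compact support in a small interior neighborhood of $\overline V$, satisfying $\psi|_{\partial M}\equiv 0$ and $\partial_\nu\psi>0$ on $\overline V$ (where $\nu$ is the inward unit $g$-normal to $\partial M$), and consider the conformal family $g_s:=(1+s\psi)g$. A standard computation for the change of boundary mean curvature under a conformal deformation gives, on $\overline V$,
\[H_{g_s}(\partial M)\,=\,H_g(\partial M)\,+\,\tfrac{n}{2}\,s\,\partial_\nu\psi\,+\,O(s^2),\]
so $\partial M$ is strictly mean convex (with respect to its inward $g_s$-normal) on $\overline V$ for all sufficiently small $s>0$.

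By non-degeneracy of $\Sigma$ in $(M,g)$ and the implicit function theorem for the free boundary minimal surface equation (see, e.g., \cite{ACS17}), there exist $s_0>0$ and a smooth family $\{\Sigma_s\}_{s\in[0,s_0)}$ of almost properly embedded FBMHs in $(M,g_s)$ with $\Sigma_0=\Sigma$ and $\Sigma_s\to\Sigma$ in the $C^\infty$ topology as $s\to 0$. The claim is that $\Sigma_s$ is properly embedded for every sufficiently small $s>0$. Indeed, since $\partial\Sigma_s$ is $C^\infty$-close to $\partial\Sigma\subset\partial M\setminus\overline V$, for $s$ small $\Sigma_s$ has no free boundary meeting a neighborhood of $\overline V$; White's maximum principle \cite[Theorem 1]{Wh10}, applied at each point of $\overline V$ where $\partial M$ is strictly mean convex in $g_s$, then yields $\Sigma_s\cap\overline V=\emptyset$. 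Outside $V$, the set $\Sigma$ is separated from $\partial M$ except along $\partial\Sigma$, so $C^\infty$-closeness of $\Sigma_s$ to $\Sigma$ together with the orthogonality of $\Sigma_s$ and $\partial M$ along $\partial\Sigma_s$ forces any remaining intersection $\Sigma_s\cap\partial M$ to lie in $\partial\Sigma_s$. Altogether $\Sigma_s\cap\partial M=\partial\Sigma_s$, so $\Sigma_s$ is properly embedded, and letting $s_i\to 0$ produces the desired sequences $(g_i,\Sigma_i)$.

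The main technical subtlety lies in reconciling the implicit function theorem (which produces $\Sigma_s$ arbitrarily $C^\infty$-close to $\Sigma$, and hence to each point $q\in T\subset\Sigma$) with White's maximum principle (which forces a strictly positive $g_s$-distance between $\Sigma_s$ and $\overline V$). The two are consistent because both distances scale like $O(s)$: the strict mean convexity of $\partial M$ in $g_s$ is of size $\tfrac{n}{2}s\,\partial_\nu\psi$, so the distance bound furnished by \cite{Wh10} is also of order $s$, matching the $O(s)$ scale of the IFT deformation. Geometrically, the maximum principle dictates the sign of the leading-order deformation, pushing $\Sigma$ strictly into the interior of $M$ at each point of $T$ as $s$ increases from zero, thereby eliminating the tangential touching for every sufficiently small $s>0$.
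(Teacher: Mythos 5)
The paper's actual proof is a one-step ambient diffeomorphism trick that bypasses all the machinery you invoke: regard $M$ as a domain of a closed manifold $\wti M$, push $\partial M$ outward (away from $\Sigma$) via the flow $F_t$ of a compactly supported vector field $\xi\nabla d$ equal to the outward unit normal on $\mathrm{Touch}(\Sigma)$ and vanishing near $\partial\Sigma$; then $\Sigma$ itself is \emph{already} a properly embedded FBMH in the enlarged domain $(F_t(M),\wti g)$, and pulling back by $F_t$ gives $\Sigma_i=F_{1/i}^{-1}(\Sigma)$, properly embedded in $(M,F_{1/i}^*\wti g)$. This needs no non-degeneracy, no implicit function theorem, and no maximum principle, and works without any diagonal reduction.

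Your route has a genuine gap at the implicit function theorem step. You assert that non-degeneracy of $\Sigma$ plus the IFT in the style of \cite{ACS17} yields a smooth family of \emph{almost properly embedded} FBMHs $\Sigma_s$ in $(M,g_s)$. But the structure theorem in \cite{ACS17} (and the underlying White \cite{Whi91} framework) is proved only for \emph{properly} embedded hypersurfaces, where the space of nearby hypersurfaces is a Banach manifold of normal graphs. When $\mathrm{Touch}(\Sigma)\neq\emptyset$, the constraint that the perturbed surface stay inside $M$ turns the admissible normal graphs into a closed convex cone rather than an open set of a Banach space; the Jacobi operator being invertible (your notion of non-degeneracy, which in this paper only counts variations compactly supported away from the touching set) does not give an inverse function theorem in this cone, and there is no a priori reason the unique IFT solution in the unconstrained ambient problem lands inside $M$. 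Your final paragraph acknowledges the tension between "$\Sigma_s$ is $C^\infty$-close to $\Sigma$, hence to $T\subset\partial M$" and "$\Sigma_s$ is at positive distance from $\overline V\subset\partial M$," and resolves it heuristically by saying both scales are $O(s)$ and that the maximum principle "dictates the sign" of the deformation; this is an intuition, not a proof, and it is precisely the step one cannot make rigorous without a substitute for the IFT adapted to the one-sided obstacle. The conformal computation and the reduction via Proposition~\ref{prop:free:non-degenerate} are fine as far as they go, but the core existence claim for $\Sigma_s$ is unsupported.
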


\begin{proof}
Recall that $(M,g)$ can be regarded as a domain of a closed Riemannian manifold $(\wti M,\wti g)$ of the same dimension. Let $d$ be the signed distance to $\partial M$ in $\wti M$ so that $\nabla d|_{\partial M}$ is the unit normal vector field on $\partial M$ pointing out of $M$. Let $V$ be an open set of $\wti M$ so that $\overline V\cap \partial \Sigma=\emptyset$ and $\mathrm{Touch}(\Sigma)\subset V$. Let $\xi$ be a nonnegative cut-off function supported in $V$ such that $\xi(x)=1$ for all $x\in\mathrm{Touch}(\Sigma)$ and $\xi \nabla d$ is a smooth vector field on $\wti M$. Denote by $\{F_t\}_{t\in[0,1]}$ the family of diffeomorphisms of $\wti M$ generated by $\xi\nabla d$.

Now let $g_i=F^*_{1/i}\wti g$ and $\Sigma_i=(F_{1/i})^{-1}(\Sigma)$. Since $F_t$ is a diffeomorphism of $\wti M$, $g_i\rightarrow g$ and $\Sigma_i\rightarrow\Sigma$ smoothly. Note that $\Sigma$ is a properly embedded FBMH in $(F_t(M),\wti g)$. In other words, $\Sigma_i$ is a properly embedded FBMH in $M$ with respect to the metric $g_i$. This completes the proof.
\end{proof}

\subsection{Proof of Theorem \ref{T:density}} 

As in the proof of \cite[Main theorem]{IMN17}, Theorem \ref{T:density} follows once we have proved the following proposition.

\begin{proposition}
\label{conj:free:dense}
Let  $(M^{n+1},\partial M;g)$ be a compact Riemannian manifold with boundary and $3 \leq (n+1)\leq 7$. Let $\mc{M}$ be the space of all smooth Riemannian metrics on $M$, endowed with the smooth topology. Suppose that $U\subset M$ is a non-empty relatively open subset.  Let $\mc{M}_U$ be the set of  metrics $g\in \mc{M}$ such that there exists a non-degenerate, properly embedded FBMH $\Sigma$ in $(M,g)$ which intersects $U$. Then $\mc{M}_U$ is open and dense in $\mc{M}$ in the smooth topology.
\end{proposition}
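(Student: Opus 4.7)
The approach is to adapt the Irie--Marques--Neves strategy from \cite{IMN17} to the free boundary setting. Openness of $\mc{M}_U$ is straightforward: given a non-degenerate, properly embedded FBMH $\Sigma$ in $(M,g)$ meeting $U$, the implicit function theorem applied to the Jacobi operator produces a unique nearby FBMH $\Sigma'$ for any metric $g'$ sufficiently close to $g$. Since non-degeneracy, proper embeddedness, and meeting $U$ are all open conditions, $g' \in \mc{M}_U$.

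For density, the plan is to argue by contradiction. Assume there exists a non-empty open set $\mc{U} \subset \mc{M}$ disjoint from $\mc{M}_U$. Using genericity of bumpy metrics \cite{ACS17}, select a bumpy $g_0 \in \mc{U}$. Fix a smooth non-negative function $\chi$ supported in a compact set $K \subset U$ with $\chi \not\equiv 0$, and consider the conformal family $g_t = (1 + t\chi) g_0$. For $\tau > 0$ sufficiently small, the entire path $\{g_t\}_{t \in [0,\tau]}$ lies in $\mc{U}$, and $\mathrm{Vol}(M, g_t)$ is strictly increasing in $t$. By the Weyl Law (Theorem \ref{thm:weyl}), there exists $p \in \mb{N}$ such that $\omega_p(M, g_0) < \omega_p(M, g_\tau)$.

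The central step is to show that for every $t \in [0,\tau]$, the hypersurfaces $\Sigma_{t,j}$ produced by Proposition \ref{P:width} realizing $\omega_p(M, g_t) = \sum_j m_{t,j} \mathrm{area}_{g_t}(\Sigma_{t,j})$ are disjoint from $U$. The plan is to apply Proposition \ref{prop:perturb to be proper} to find small perturbations $g_t^{(1)} \to g_t$ together with properly embedded FBMHs $\Sigma_{t,j}^{(1)} \to \Sigma_{t,j}$ in $(M, g_t^{(1)})$, then apply Proposition \ref{prop:free:non-degenerate} to further perturb the metric (near interior points of $\Sigma_{t,j}^{(1)}$, keeping it properly embedded) to obtain $g_t^{(2)} \to g_t^{(1)}$ in which $\Sigma_{t,j}^{(1)}$ is non-degenerate. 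For small enough perturbations, $g_t^{(2)} \in \mc{U}$; by hypothesis $g_t^{(2)} \notin \mc{M}_U$, so $\Sigma_{t,j}^{(1)} \cap U = \emptyset$. Passing to the limit as the perturbation vanishes, since $M \setminus U$ is closed, we conclude $\Sigma_{t,j} \subset M \setminus U$. Because $g_t \equiv g_0$ on $M \setminus U$, each $\Sigma_{t,j}$ is also a FBMH in $(M, g_0)$ with $\mathrm{area}_{g_t}(\Sigma_{t,j}) = \mathrm{area}_{g_0}(\Sigma_{t,j})$.

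To finish, fix $\Lambda > \omega_p(M, g_\tau)$ and invoke Proposition \ref{prop:countablity of W}: since $g_0$ is bumpy, the collection $\mc{M}(\Lambda, p)$ of almost properly embedded FBMHs of $(M, g_0)$ with area at most $\Lambda$ and index at most $p$ is countable. Consequently, the set
\[
S := \Big\{ \textstyle\sum_j m_j \mathrm{area}_{g_0}(\Sigma_j) : \Sigma_j \in \mc{M}(\Lambda, p),\ m_j \in \mb{N},\ \sum_j m_j \mathrm{area}_{g_0}(\Sigma_j) \leq \Lambda \Big\}
\]
is countable. By the previous step, $\omega_p(M, g_t) \in S$ for all $t \in [0,\tau]$. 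Since $t \mapsto \omega_p(M, g_t)$ is continuous (indeed Lipschitz) and takes values in a countable subset of $\mb{R}$, the intermediate value theorem forces it to be constant, contradicting $\omega_p(M, g_0) < \omega_p(M, g_\tau)$. The main obstacle is the perturbation step: one must chain the two perturbation lemmas so that the final metric retains the properly embedded, non-degenerate FBMH while still lying in $\mc{U}$, and then transfer the resulting avoidance property $\Sigma_{t,j}^{(1)} \cap U = \emptyset$ back to the original $\Sigma_{t,j}$ by a smooth limit argument.
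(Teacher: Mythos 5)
Your proposal is correct and follows essentially the same Irie--Marques--Neves strategy that the paper uses (Weyl law combined with generic countability of the area spectrum via Proposition \ref{prop:countablity of W}, plus the two perturbation lemmas to promote almost properly embedded FBMHs to properly embedded non-degenerate ones). The paper organizes the density argument as a case analysis at the initial bumpy metric $g'$ rather than a single global contradiction, so it invokes the two perturbation propositions only at the very end rather than inside the countability step, but the underlying content and required ingredients are the same.
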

\begin{proof}
Let $g \in\mc M_U$ and $\Sigma$ be as in the statement of the proposition. Because $\Sigma$ is properly embedded and non-degenerate, from the Structure Theorem of
White \cite[Theorem 2.1]{Whi91} (see \cite[Theorem 35]{ACS17} for a version in the free boundary setting), for every Riemannian metric $g'$ sufficiently close to $g$, there
exists a unique non-degenerate properly embedded FBMH $\Sigma'$ close to $\Sigma$. This implies $\mc M_U$ is open.

It remains to show the set $\mc M_U$ is dense. Let $g$ be an arbitrary smooth Riemannian metric on $M$ and $\mc V$ be an arbitrary neighborhood of $g$ in the
$C^\infty$ topology. By the Bumpy Metrics Theorem (\cite[Theorem 2.1]{Whi91},\cite[Theorem 9]{ACS17}), there exists $g'\in\mc V$ such that every compact, almost properly embedded FBMH with respect to $g'$ is non-degenerate. If one of these hypersurfaces is almost properly embedded and intersects $U$, then by Proposition \ref{prop:perturb to be proper}, there exist a sequence of metrics $g_i$ on $M$, and a sequence of hypersurfaces $\Sigma_i$ so that 
\begin{itemize}
\item $g_i$ converges to $g'$ in the smooth topology;
\item $\Sigma_i$ smoothly converges to $\Sigma$;
\item $\Sigma_i$ is a properly embedded FBMH in $(M,\partial M;g_i)$ for each $i \in \mb N$.
\end{itemize}
Then for $i$ large enough, $g_i\in\mc V$ and $\Sigma_i$ is a properly embedded FBMH with respect to $g_i$ so that $\Sigma_i\cap U\neq \emptyset$. This implies that $g_i\in \mc M_U$ and we are done.

Hence we can suppose that every almost properly embedded FBMH with respect to $g'$ is contained in the complement of $U$. Since $g'$ is bumpy, it follows from Proposition \ref{prop:countablity of W} $\mc M(\Lambda, I)$ is countable with respect to $g'$ for any $\Lambda>0$ and $I\in \mb{N}$. Therefore, the set
\[
\mc C:=
\left\{\sum_{j=1}^Nm_j\area_{g'}(\Sigma_j)\,\Big|
\begin{array}{ll}
&N\in\mb N, \{m_j\}\subset \mb N,\{\Sigma_j\} \text{ disjoint collection of almost}\\
&\text{ properly embedded FBMHs in $(M,\partial M; g')$}
\end{array} 
\right\}
\]
is countable. Following the arguments in \cite[Proposition 3.1]{IMN17} and using the Weyl Law (Theorem \ref{thm:weyl}) and Proposition \ref{P:width}, there exists $g''\in\mc V$ so that $(M,\partial M;g'')$ admits an almost properly embedded FBMH intersecting $U$. Then by Proposition \ref{prop:free:non-degenerate} and \ref{prop:perturb to be proper}, we have $\mc V\cap \mc M_U\neq \emptyset$.
\end{proof}

\appendix
\section{logarithmic cut-off trick}\label{sec:appendix:log cutoff}

We recall a construction of the logarithmic cut-off functions used in this paper.

\begin{lemma}
Let $(M^{n+1},g)$ be a Riemannian manifold with $n+1\geq 3$ and $\Sigma$ be a hypersurface in $M$. For $p\in \Sigma$, there exists a family of cut-off functions $\{\xi_r\}$ on $M$ satisfying
\begin{enumerate}
\item $\xi_r|_{B_r(p)}=0$ and $0\leq \xi_r(x)\leq 1$ for $x\in M$;
\item $\int_\Sigma|\nabla\xi_r|^2\rightarrow 0$ and $\xi_r\rightarrow 1$ as $r\rightarrow 0$.
\end{enumerate}
\end{lemma}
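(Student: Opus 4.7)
The plan is to define $\xi_r$ as a radial cut-off built from the distance function $d(\cdot,p)$ in $M$, using the classical logarithmic profile on the annulus $\{r \leq d(\cdot,p) \leq \sqrt r\}$. Concretely, for $r < 1$ small enough, let $\eta_r:[0,\infty)\to[0,1]$ be the continuous, piecewise-smooth function equal to $0$ on $[0,r]$, equal to $1$ on $[\sqrt r,\infty)$, and equal to $\log(t/r)/\log(1/\sqrt r)$ on $[r,\sqrt r]$, and set $\xi_r(x):=\eta_r(d(x,p))$. Property (1) is immediate from this definition, and pointwise convergence $\xi_r\to 1$ off $p$ follows because $d(x,p)>\sqrt r$ eventually for any fixed $x\ne p$.

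The core of the argument is the gradient estimate. On the annular region $A_r:=\{r\leq d(\cdot,p)\leq\sqrt r\}$ one computes
\[
|\eta_r'(t)| \;=\; \frac{1}{t\,\log(1/\sqrt r)} \;=\; \frac{2}{t\,|\log r|},
\]
so, since $|\nabla d|\leq 1$, $|\nabla\xi_r|^2 \leq 4/(d(x,p)^2(\log r)^2)$ on $A_r$ and vanishes elsewhere. Using the standard monotonicity-type area bound $\mathcal H^n(\Sigma\cap B_\rho(p))\leq C\rho^n$ for a smooth hypersurface at small scales, together with the coarea formula on $\Sigma$ applied to the function $d(\cdot,p)|_\Sigma$, I would bound
\[
\int_\Sigma |\nabla\xi_r|^2 \,d\mu_\Sigma \;\leq\; \frac{C}{(\log r)^2}\int_r^{\sqrt r} t^{n-3}\,dt.
\]

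Finally, split into two cases. In the critical case $n=2$, the inner integral equals $\tfrac12|\log r|$, yielding the bound $C/|\log r|\to 0$ as $r\to 0$. In the supercritical case $n>2$, the integral is at most $Cr^{(n-2)/2}$, so the full quantity is bounded by $Cr^{(n-2)/2}/(\log r)^2\to 0$. Either way, (2) is verified. The only mildly delicate point is ensuring the local area bound on $\Sigma\cap B_\rho(p)$, which for a smooth hypersurface (whether or not $p$ lies on $\partial M$) follows from a local graph representation of $\Sigma$ near $p$; no additional curvature hypothesis is needed since we only require the bound on a fixed small scale independent of $r$.
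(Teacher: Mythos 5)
Your construction is exactly the paper's: the paper defines $\xi_r(x) = 2 - \tfrac{2\log|x|}{\log r}$ on the annulus $r < |x| \leq \sqrt r$ (with $|x| = \dist(x,p)$), which is algebraically identical to your $\log(t/r)/\log(1/\sqrt r)$, and then the paper simply asserts that one can check the requirements directly; you have filled in the verification. One small point worth tightening: applying the coarea formula on $\Sigma$ to $d(\cdot,p)$ produces a factor $|\nabla_\Sigma d| \leq 1$ on the wrong side of the inequality, so the passage from $\mathcal H^n(\Sigma\cap B_\rho)\leq C\rho^n$ to $\int_\Sigma |\nabla\xi_r|^2 \leq \tfrac{C}{(\log r)^2}\int_r^{\sqrt r} t^{n-3}\,dt$ is better done by a Stieltjes integration by parts (or dyadic annuli), which also produces a harmless boundary term $\sim r^{(n-2)/2}/(\log r)^2$; this does not affect the conclusion in either the $n=2$ or $n>2$ case.
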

\begin{proof}
For simplicity, we denote $|x|:=\dist(x,p)$. Now define $\xi_r(x)$ as follows:
\[
\xi_r(x):=
\left\{
\begin{aligned}
& 0 ,&|x|\leq r;\\
& 2-\frac{2\log|x|}{\log r}, &r< |x|\leq \sqrt r;\\
& 1, &|x|>\sqrt r.
\end{aligned} 
\right.
\]
One can check directly that such a cut-off function satisfies all the requirements.
\end{proof}

\bibliographystyle{plain}
\end{document}